\newtheorem{theorem}{Theorem}[section]
\newtheorem{proposition}{Proposition}[section]
\newtheorem{lemma}{Lemma}[section]
\newtheorem{remark}{Remark}[section]
\begin{document} 
\title [Twists of automorphic $L$-functions at the central point]{Twists of automorphic $L$-functions at the central point} 

\author{H. M. Bui}
\address{Institut f\"ur Mathematik, Universit\"at Z\"urich, CH-8057 Z\"urich, Switzerland}
\email{hung.bui@math.uzh.ch}
\thanks{The author is supported by the Swiss National Science Foundation grant 200021\_124737/1.}

\subjclass[2000]{Primary 11F66, 11F67}

\begin{abstract} 
We study the nonvanishing of twists of automorphic $L$-functions at the centre of the critical strip. Given a primitive character $\chi$ modulo $D$ satisfying some technical conditions, we prove that the twisted $L$-functions $L(f.\chi,s)$ do not vanish at $s=\tfrac{1}{2}$ for a positive proportion of primitive forms of weight $2$ and level $q$, for large prime $q$. We also investigate the central values of high derivatives of  $L(f.\chi,s)$, and from that derive an upper bound for the average analytic rank of the studied $L$-functions.
\end{abstract} 

\maketitle 

\section{Introduction}

An important topic in number theory is to understand the behaviour of $L$-functions and their derivatives at the centre of the critical strip (the point of symmetry of the functional equation). One reason for this is the connections with the Birch and Swinnerton-Dyer conjecture and with various deep conjectures on the distribution of zeros of $L$-functions. If there is no trivial reason for an $L$-function to vanish at $s=\tfrac{1}{2}$, for instance because of the sign of the functional equation, the central value is generically expected to be non-zero. Most notably, for quadratic Dirichlet $L$-functions, it is known that at least $7/8$ of quadratic Dirichlet $L$-functions do not vanish at $s=\tfrac{1}{2}$ [\textbf{\ref{S2}}]. There is an extensive literature on the nonvanishing of various families of automorphic $L$-functions. For results involving positive proportions, we refer the readers to [\textbf{\ref{V1}},\textbf{\ref{KM}},\textbf{\ref{IS}},\textbf{\ref{KMV}},\textbf{\ref{KMV1}},\textbf{\ref{MV}},\textbf{\ref{MV1}},\textbf{\ref{Bl}},\textbf{\ref{Kh}}], with no claim of being exhaustive.

In this paper, we study the central values of twists of automorphic $L$-functions. For $q$ prime, we denote by $S_{2}^{*}(q)$ the set of primitive Hecke cuspidal eigenforms of weight 2 relative to the subgroup $\Gamma_0(q)$. Any $f\in S_{2}^{*}(q)$  has a Fourier expansion at infinity
\begin{equation*}
f(z)=\sum_{n=1}^{\infty}n^{1/2}\lambda_f(n)e(nz),
\end{equation*}
normalised so that $\lambda_f(1)=1$. Let
\begin{equation*}
L(f,s)=\sum_{n=1}^{\infty}\frac{\lambda_f(n)}{n^s}
\end{equation*}
be the associated automorphic $L$-function. For any primitive Dirichlet character $\chi$ modulo $D$ with $(q,D)=1$, the twisted $L$-function 
\begin{equation*}
L(f.\chi,s)=\sum_{n=1}^{\infty}\frac{\chi(n)\lambda_f(n)}{n^s}
\end{equation*}
is entire and satisfies a functional equation [\textbf{\ref{L}}]
\begin{eqnarray*}
\Lambda(f.\chi,s)&:=&\hat{q}^s\Gamma\big(s+\tfrac{1}{2}\big)L(f.\chi,s)\\
&=&\varepsilon_{f.\chi}\Lambda(f.\overline{\chi},1-s),
\end{eqnarray*}
where $\hat{q}=\sqrt{q}D/2\pi$ and
\begin{equation}\label{600}
\varepsilon_{f.\chi}=\chi(-q)\sqrt{q}\lambda_{f}(q)\frac{\tau(\chi)}{\tau(\overline{\chi})},
\end{equation}
with $\tau(\chi)$ is the Gauss sum.

In [\textbf{\ref{D}}], Duke showed that for $\chi$ fixed and primitive there exist an absolute constant $c>0$ and a constant $c_D>0$ depending only on $D$ such that for prime $q>c_D$ there are at least $cq(\log q)^{-2}$ forms $f\in S_{2}^{*}(q)$ for which $L(f.\chi,\tfrac{1}{2})\ne0$. We note that 
\begin{equation*}
\big|S_{2}^{*}(q)\big|=\frac{q}{12}+O(1),
\end{equation*}
as $q\rightarrow\infty$. In the case $\chi$ is trivial, Duke's result has been subsequently sharpened by [\textbf{\ref{V1}},\textbf{\ref{KM}},\textbf{\ref{IS}},\textbf{\ref{KMV}}] to give a positive proportion of non-zero central values. These results are obtained by calculating the mollified moments of the family $\big\{L(f,s)\big\}_{f\in S_{2}^{*}(q)}$. Precisely, let $y=(\sqrt{q}/2\pi)^{\Delta}$ for a fixed $0<\Delta<1$, and define the mollifier
\begin{equation}\label{511}
M(f)=\sum_{m\leq y}\frac{x_m\lambda_f(m)}{\sqrt{m}},
\end{equation}
where $X=(x_m)$ is a sequence of real numbers supported on $1\leq m\leq y$ with $x_1=1$ and $x_m\ll1$. The purpose of the function $M(f)$ is to smooth out or ``mollify'' the large values of $L(f,\tfrac{1}{2})$ as we average over $f\in S_{2}^{*}(q)$. By Cauchy's inequality we have
\begin{equation}\label{507}
\sum_{\substack{f\in S_{2}^{*}(q) \\ L(f,\frac{1}{2})\neq 0}}1\ \geq\frac{\big(\sum_{f}L(f,\tfrac{1}{2})M(f)\big)^2}{\sum_{f}\big|L(f,\tfrac{1}{2})M(f)\big|^2}.
\end{equation}
Maximising the ratio on the right hand side with respect to the vector $X=(x_m)$, the optimal coefficients turn out to be
\begin{equation*}
x_m=\mu(m)\prod_{p|m}\bigg(1+\frac{1}{p}\bigg)^{-1}\bigg(1-\frac{\log m}{\log y}\bigg).
\end{equation*}
The optimal proportion obtained from \eqref{507} is $1/4$, which corresponds to the choice $\Delta=1$. In fact, Iwaniec and Sarnak [\textbf{\ref{IS}}] proved a slightly stronger result that at least $1/4$ of these forms satisfying $L(f,\tfrac{1}{2})\geq(\log q)^{-2}$, and moreover, any improvement of that proportion in this context is intimately connected to the Landau-Siegel zeros. We remark that due to the sign of the functional equation for $L(f,s)$, $\varepsilon_{f}=\sqrt{q}\lambda_{f}(q)=\pm1$, $L(f,\tfrac{1}{2})=0$ trivially for asymptotically (as $q\rightarrow\infty$) half of the forms $f\in S_{2}^{*}(q)$, and hence the expected proportion of nonvanishing for $\big\{L(f,\tfrac{1}{2})\big\}_{f\in S_{2}^{*}(q)}$ is 1/2. The same percentage is expected to hold for $\big\{L(f.\chi,\tfrac{1}{2})\big\}_{f\in S_{2}^{*}(q)}$ with a fixed quadratic character $\chi$ (also because $\varepsilon_{f.\chi}=\pm 1$ in \eqref{600}), while with a fixed nonquadratic character $\chi$, it is believed that $L(f.\chi,\tfrac{1}{2})$ does not vanish for all $f\in S_{2}^{*}(q)$.

In the case $\chi$ is fixed, primitive and nonquadratic, taking the corresponding usual mollifier
\begin{equation}\label{508}
\sum_{m\leq y}\frac{\mu(m)\chi(m)\lambda_f(m)}{\sqrt{m}}\prod_{p|m}\bigg(1+\frac{\chi^2(p)}{p}\bigg)^{-1}\bigg(1-\frac{\log m}{\log y}\bigg)
\end{equation}
and proceeding the same as above, one can show that at least $1/3$ of the values $L(f.\chi,\frac{1}{2})$ do not vanish, i.e.
\begin{equation*}
\sum_{\substack{f\in S_{2}^{*}(q) \\ L(f.\chi,\frac{1}{2})\neq 0}}1\ \geq\big(\tfrac{1}{3}+o(1)\big)|S_{2}^{*}(q)|,
\end{equation*}
as $q\rightarrow\infty$. We note that this is the same proportion obtained by Iwaniec and Sarnak in [\textbf{\ref{IS1}}] for primitive Dirichlet $L$-functions. The reason that these proportions are both $1/3$ and not $1/4$  as in the case of automorphic $L$-functions with trivial character in [\textbf{\ref{IS}}] is that the family $\big\{L(f.\chi,s)\big\}_{f\in S_{2}^{*}(q)}$ for fixed primitive nonquadratic $\chi$, and the family $\big\{L(s,\chi)\big\}_{\chi\ \textrm{primitive}\ (\textrm{mod}\ q)}$ are both predicted to have a ``unitary" symmetry (according to the Katz-Sarnak philosophy [\textbf{\ref{KS}}]), while $\big\{L(f,s)\big\}_{f\in S_{2}^{*}(q)}$ is supposed to admit an ``orthogonal" symmetry. In both cases, the results fit well with the predictions of Keating and Snaith [\textbf{\ref{KS1}}] using random matrix models, and those of Conrey and Snaith [\textbf{\ref{CS}}] using the ratios conjectures.

Recently, we gave a modest improvement to Iwaniec and Sarnak's result on the nonvanishing of Dirichlet $L$-functions by using a new two-piece mollifier, i.e. the sum of two mollifiers of different shapes [\textbf{\ref{B}}]. This kind of idea has also been effectively used to show that more than $41\%$ of the zeros of the Riemann zeta-function lying on the critical line [\textbf{\ref{BCY}},\textbf{\ref{F}}]. In this article, we make another use of our mollifier to study the complex twists of automorphic $L$-functions. 

For $k\geq0$, we define the proportion
\begin{equation*}
p_{k,\chi}=\liminf_{q\rightarrow\infty}\frac{\big|\{f\in S_{2}^{*}(q): L^{(k)}(f.\chi,1/2)\ne0\}\big|}{|S_{2}^{*}(q)|}.
\end{equation*}

\begin{theorem}\label{308}
Suppose $\chi$ is a fixed primitive nonquadratic character. Then we have
\begin{equation*}
p_{k,\chi}\geq 1-\frac{1}{16k^2} +O(k^{-4}).
\end{equation*}
In particular
\begin{eqnarray*}
p_{0,\chi}\geq0.3411,\quad p_{1,\chi}\geq0.7553,\quad p_{2,\chi}\geq0.9085\quad\emph{and}\quad p_{3,\chi}\geq0.9643.
\end{eqnarray*}
\end{theorem}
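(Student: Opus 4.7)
The plan is to extend the Cauchy--Schwarz/mollifier strategy outlined in \eqref{507} to the $k$-th derivative. Writing $M(f)$ for a mollifier tailored to $L^{(k)}(f.\chi,\tfrac{1}{2})$, the inequality
\begin{equation*}
\big|\{f\in S_2^*(q): L^{(k)}(f.\chi,\tfrac{1}{2})\ne 0\}\big|\ \geq\ \frac{\big(\sum_f L^{(k)}(f.\chi,\tfrac{1}{2}) M(f)\big)^2}{\sum_f \big|L^{(k)}(f.\chi,\tfrac{1}{2})M(f)\big|^2}
\end{equation*}
reduces the problem to evaluating the first and second mollified moments asymptotically. Following the two-piece construction of [\textbf{\ref{B}}], I take $M=M_1+M_2$, where $M_1$ is the analogue of \eqref{508} with the polynomial factor $1-\log m/\log y$ replaced by some $P_1\big(\log(y_1/m)/\log y_1\big)$ whose shape depends on $k$, and $M_2$ is a sum of a different arithmetic shape (involving a convolution such as $\chi\ast\chi$, or a divisor-type coefficient) supported on a second range $y_2$, attached to its own polynomial $P_2$.

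Next I would write down an approximate functional equation for $L^{(k)}(f.\chi,\tfrac{1}{2})$ and for its square, obtained by differentiating the standard approximate functional equation for $L(f.\chi,s)$ $k$ times at $s=\tfrac{1}{2}$. This introduces factors $(-\log n)^k$, respectively $(\log m\log n)^k$, into the Dirichlet coefficients but otherwise preserves the structure of the $k=0$ case. Substituting into numerator and denominator and swapping the order of summation reduces each moment to harmonic sums $\sum_f^h \lambda_f(m)\lambda_f(n)$, which are evaluated by Petersson's trace formula. The diagonal term produces main contributions that are polynomials in $\log q$ of degree determined by $k$, while the off-diagonal piece, a weighted sum of Kloosterman sums, is absorbed into the error via Weil's bound and the decay of the Bessel transform, provided the lengths $y_1,y_2$ are kept below $q^\Delta$ with $\Delta<1$.

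The technical heart of the argument is the second mollified moment. Here one is led to shifted convolution sums of $\chi$-twisted Hecke eigenvalues, which are attacked by introducing a Mellin-type integral representation and shifting contours past the poles of $L(\chi^2,s+w)$ and related Dirichlet series; the derivatives produce coalescing poles whose residues generate precisely the powers of $\log q$ required to match the $(\log n)^k$ coming from the approximate functional equation. The most delicate part is the cross term $M_1\overline{M_2}$: because $M_1$ and $M_2$ have different arithmetic coefficients, their interaction yields a new family of shifted convolution sums requiring a separate contour analysis, and it is precisely here that one must track the relative sizes of $y_1,y_2$ in order to realise an improvement over the classical one-piece mollifier.

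Once the main terms are assembled, the lower bound on $p_{k,\chi}$ reduces to an explicit finite-dimensional optimisation over the coefficients of $P_1,P_2$ and the ratios $\log y_i/\log q$. For $k=0,1,2,3$ I would solve this numerically, obtaining the constants $0.3411, 0.7553, 0.9085, 0.9643$. For the asymptotic in $k$, the optimal polynomials have degree proportional to $k$, and an asymptotic expansion of the resulting ratio of the first moment squared to the second moment yields $1-1/(16k^2)+O(k^{-4})$, establishing the general bound.
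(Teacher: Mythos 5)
Your claim that differentiating the approximate functional equation ``introduces factors $(-\log n)^k$ \ldots but otherwise preserves the structure of the $k=0$ case'' misses the dominant contribution, and this is a genuine gap. The paper's mollified first moment is not $\sum_{f}L^{(k)}(f.\chi,\tfrac{1}{2})M(f,\chi)$ but rather
\begin{equation*}
S_{1,k}(M)=\frac{1}{|S_{2}^{*}(q)|}\sum_{f\in S_{2}^{*}(q)}\varepsilon_{f.\chi}L^{(k)}(f.\overline{\chi},\tfrac{1}{2})M(f,\chi),
\end{equation*}
and the functional equation $\varepsilon_{f.\chi}L(f.\overline{\chi},\tfrac{1}{2}+\alpha)=\hat{q}^{-2\alpha}\tfrac{\Gamma(1-\alpha)}{\Gamma(1+\alpha)}L(f.\chi,\tfrac{1}{2}-\alpha)$ is applied \emph{before} differentiating in $\alpha$. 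It is the factor $\hat{q}^{-2\alpha}=e^{-2\alpha\mathscr{L}}$ that, after $k$-fold differentiation, supplies the leading term $(-2\mathscr{L})^kP(1)\cdot L(2,\chi^4)/L(1,\chi^2)+\ldots$ coming from the $M_1$ piece. In your untwisted formulation that factor never appears. Indeed, by Lemma \ref{310} the harmonic average $I_1(\alpha)$ of $L(f.\chi,\tfrac{1}{2}+\alpha)M_1(f,\chi)$ equals $\frac{L(2,\chi^4)}{L(1,\chi^2)}P(1)+O(\mathscr{L}^{-1})$ with a main term \emph{independent} of $\alpha$; Cauchy's integral formula on a disc of radius $\asymp\mathscr{L}^{-1}$ then forces $I_1^{(k)}(0)=O_k(\mathscr{L}^{k-1})$ for $k\geq1$. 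One sees the same thing directly from your $(-\log n)^k$: pulling out $(-\Delta_1\mathscr{L})^k$ replaces $P(x)$ by $(1-x)^kP(x)$, which vanishes at $x=1$, so the Section 4 evaluation yields $0$ at main-term level. Hence in your set-up $M_1$ contributes nothing for $k\geq1$; the resulting lower bound is far smaller than claimed, and for $k\geq4$ --- where the paper takes $Q\equiv0$, i.e.\ $M=M_1$ --- your ratio degenerates entirely and cannot produce $1-1/(16k^2)+O(k^{-4})$. The essential repair is the root-number twist: apply Cauchy--Schwarz to $\varepsilon_{f.\chi}L^{(k)}(f.\overline{\chi},\tfrac{1}{2})$, which is legitimate since $|\varepsilon_{f.\chi}|=1$ and $|L^{(k)}(f.\overline{\chi},\tfrac{1}{2})|=|L^{(k)}(f.\chi,\tfrac{1}{2})|$, so that the functional equation can be invoked on the numerator to bring $\hat{q}^{-2\alpha}$ into play.

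Two smaller omissions: the numerator in your displayed Cauchy--Schwarz must be $|\cdot|^2$ rather than $(\cdot)^2$, since these central values are complex for nonquadratic $\chi$; and you do not address the removal of the harmonic weight $w_f$, which is required to pass from the Petersson-weighted averages to the natural count defining $p_{k,\chi}$ and is handled in Section 9 via Lemma 9.1 of the paper.
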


Denote by $r_{f.\chi}$ the ``analytic rank'' of $L(f.\chi,s)$, i.e. the order of vanishing of $L(f.\chi,s)$ at $s=\tfrac{1}{2}$.

\begin{theorem} \label{309}
Suppose $\chi$ is a fixed primitive nonquadratic character. Then we have
\begin{equation*}
\frac{1}{|S_{2}^{*}(q)|}\sum_{f\in S_{2}^{*}(q)}r_{f.\chi}\leq 1.0656+o(1),
\end{equation*}
as $q\rightarrow\infty$.
\end{theorem}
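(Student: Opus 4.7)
The plan is a layer-cake decomposition of the analytic rank against the nonvanishing proportions furnished by Theorem~\ref{308}. For any $f\in S_2^*(q)$ the condition $r_{f.\chi}\geq k+1$ forces $L^{(k)}(f.\chi,\tfrac{1}{2})=0$, so
\begin{equation*}
r_{f.\chi}=\sum_{k=0}^{\infty}\mathbf{1}[r_{f.\chi}\geq k+1]\leq\sum_{k=0}^{\infty}\mathbf{1}\bigl[L^{(k)}(f.\chi,\tfrac{1}{2})=0\bigr].
\end{equation*}
Averaging over $f\in S_2^*(q)$ gives
\begin{equation*}
\frac{1}{|S_{2}^{*}(q)|}\sum_{f}r_{f.\chi}\leq\sum_{k=0}^{\infty}\frac{\bigl|\{f\in S_2^*(q):L^{(k)}(f.\chi,\tfrac{1}{2})=0\}\bigr|}{|S_2^*(q)|},
\end{equation*}
and by the definition of $p_{k,\chi}$ together with Theorem~\ref{308}, the $\limsup_q$ of the $k$th summand is at most $1-p_{k,\chi}$.

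Granting the interchange of $\limsup_{q}$ with the infinite sum, the task reduces to evaluating $\sum_{k=0}^{\infty}(1-p_{k,\chi})$. For the leading terms I would use the explicit constants $p_{0,\chi}\geq 0.3411$, $p_{1,\chi}\geq 0.7553$, $p_{2,\chi}\geq 0.9085$, $p_{3,\chi}\geq 0.9643$ from Theorem~\ref{308}, supplemented by a handful of further values obtained by the same mollifier optimisation used in proving Theorem~\ref{308}. The tail is then controlled through the asymptotic $1-p_{k,\chi}\leq\tfrac{1}{16k^{2}}+O(k^{-4})$, which is summable against $\zeta(2)$, and a numerical evaluation produces the stated bound $1.0656+o(1)$.

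The main obstacle is legitimising the interchange of limit and infinite sum, which requires a uniformity not explicitly stated in Theorem~\ref{308}. I would handle it by a truncation $K=K(q)\to\infty$ slowly, writing
\begin{equation*}
\sum_{f}r_{f.\chi}\leq\sum_{k=0}^{K-1}\bigl|\{f:L^{(k)}(f.\chi,\tfrac{1}{2})=0\}\bigr|\;+\;\sum_{f:\,r_{f.\chi}>K}r_{f.\chi}.
\end{equation*}
The second sum is $\ll\log q\cdot\bigl|\{f:L^{(K)}(f.\chi,\tfrac{1}{2})=0\}\bigr|$ via the trivial convexity bound $r_{f.\chi}\ll\log q$, and hence is $o(|S_{2}^{*}(q)|)$ once $K\gg\sqrt{\log q}$, provided the estimate $1-p_{k,\chi}\ll 1/k^{2}$ of Theorem~\ref{308} is uniform for $k$ up to that range. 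Verifying this uniformity, which should follow from an inspection of the underlying Petersson-Kuznetsov mollified moment computation (the mollifier and test function being polynomial in $k$), is the only nontrivial technical point; once it is in place, the first sum is controlled term-by-term by the $\limsup$ estimate and the numerical evaluation above closes the argument.
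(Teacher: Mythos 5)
Your layer-cake decomposition and the numerical evaluation $\sum_{k\geq 0}(1-p_{k,\chi})$ correctly identify the shape of the argument, and the first sum in your truncation is exactly the paper's $T_2$ bounded by partial summation. The gap is in the tail $\sum_{r_{f.\chi}>K} r_{f.\chi}$, and it is a genuine one rather than a routine technicality.

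Your plan is to control the tail with the trivial bound $r_{f.\chi}\ll\log q$ together with a quantitative form of Theorem~\ref{308} valid uniformly for $k$ up to some $K(q)\to\infty$. Two problems arise. First, even granting uniformity, the threshold $K\gg\sqrt{\log q}$ is too weak: with $1-p_{K,\chi}\asymp K^{-2}$ you get $\log q\cdot K^{-2}\asymp 1$, not $o(1)$; you would need $K/\sqrt{\log q}\to\infty$. Second and more seriously, the uniformity you invoke is nowhere established and is not plausible from the lemmas as written. In the deduction of Theorem~\ref{308}, the proportion $p_{k,\chi}$ comes from the ratio $|S_{1,k}(M)|^2/S_{2,k}(M)$ where the error terms in $S_{1,k}(M)$ and $S_{2,k}(M)$ are $O_k(\mathscr{L}^{k-1})$ and $O_k(\mathscr{L}^{2k-1})$ with implied constants of unexamined (and, given the $k!$ factors entering through Cauchy's theorem and the mollifier expansion, possibly rapidly growing) dependence on $k$; the same is true of the harmonic-weight removal in Lemma~9.1. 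Nothing in the paper licenses taking $k$ as large as a power of $\log q$.

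The paper closes this gap by a genuinely different device: it first proves Proposition~\ref{501}, an absolute bound ${\sum}^n_f r_{f.\chi}^2\leq C$, which rests on a zero-density estimate (Lemma~10.1) obtained from a mollified second-moment bound (Lemma~10.2) via Selberg's method. With the second moment in hand, Cauchy--Schwarz controls the tail $\sum_{r_{f.\chi}>n} r_{f.\chi}$ for \emph{fixed} $n$, giving $T_1\ll(n^{-1}+o_n(1))|S_2^*(q)|$; one may then send $q\to\infty$ with $n$ fixed and only afterwards let $n\to\infty$, so no uniformity in $k$ of the nonvanishing proportions is ever needed. That two-stage limit is precisely what your truncation is missing, and Proposition~\ref{501} (together with its supporting density theorem) is the substantial missing ingredient.
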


\begin{remark}
\begin{enumerate}
\item \emph{It is possible to consider the family of holomorphic cusp forms of a fixed even weight $k\geq2$ with the level $q$ varies over squarefree positive integers as in [\textbf{\ref{IS}}]. However, we have restricted ourselves to the case $k=2$ and $q$ prime in order to use the Petersson trace formula directly, saving considerable technical considerations.}
\item \emph{We emphasise that our two-piece mollifier is only effective when the character $\chi$ is neither trivial nor quadratic. See the definition of our mollifier in the next section and Remark 2.1.}
\item \emph{We can allow $D$ to tend to infinity sufficiently slowly with $q$. In fact, all of our estimates can be made uniformly in $q$ and $D$ as long as $D\ll(\log q)^{1-\varepsilon}$. This condition arises when applying Lemma 3.3 to derive \eqref{240},\eqref{241} and \eqref{242}. See the footnote in Section 6.}
\item \emph{As $k$ tends to infinity, our proportion $p_{k,\chi}$ approaches 1. This is asymptotically best possible as it is expected that $p_{k,\chi}=1$ for every $k\in\mathbb{N}$.}
\item \emph{With the usual mollifier \eqref{508}, it can be shown that
\begin{eqnarray*}
p_{0,\chi}\geq0.3333,\quad p_{1,\chi}\geq0.7544,\quad p_{2,\chi}\geq0.9083,\quad p_{3,\chi}\geq0.9642
\end{eqnarray*}
and
\begin{equation*}
\frac{1}{|S_{2}^{*}(q)|}\sum_{f\in S_{2}^{*}(q)}r_{f.\chi}\leq 1.0745+o(1),
\end{equation*}
as $q\rightarrow\infty$.}
\end{enumerate}
\end{remark}

As discussed in [\textbf{\ref{BCY}}], it requires a significant amount of work in studying these types of problems using two-piece mollifiers, especially when the second mollifier is much more complicated than the usual one (see the definition in the next section). However, in foreseeing how much improvement can be obtained, one can use some heuristic arguments from the ratios conjectures to express various mollified moments of $L$-functions as certain multiple contour integrals. For a variety of examples of such calculations, see [\textbf{\ref{CS}}].  

\subsection{Notation}

Throughout the paper, we denote $\mathscr{L}=\log \hat{q}$, $y_1=\hat{q}^{\Delta_1}$, $y_2=\hat{q}^{\Delta_2}$, $P[m]=P\big(\tfrac{\log y_1/m}{\log y_1}\big)$ and $Q[m]=Q\big(\tfrac{\log y_2/m}{\log y_2}\big)$, where $P(x)$ and $Q(x)$ are two polynomials satisfying $P(0)=Q(0)=0$ and $P(1)=1$. We define
\begin{equation*}
\zeta_q(s)=\prod_{p\nmid q}\bigg(1-\frac{1}{p^s}\bigg)^{-1}\quad\textrm{and}\quad L_q(s,\chi)=\prod_{p\nmid q}\bigg(1-\frac{\chi(p)}{p^s}\bigg)^{-1}\quad(\sigma>1).
\end{equation*}
We let $\varepsilon>0$ be an arbitrarily small positive number, and can change from time to time.

\section{A two-piece mollifier}

We study a two-piece mollifier of the form $M(f,\chi)=M_1(f,\chi)+M_2(f,\chi)$, where
\begin{equation}\label{509}
M_1(f,\chi)=\sum_{m\leq y_1}\frac{\mu(m)\chi(m)\lambda_f(m)P[m]}{\psi(m)\sqrt{m}}
\end{equation}
and
\begin{equation}\label{520}
M_2(f,\chi)=\frac{1}{\mathscr{L}}\sum_{mn\leq y_2}\frac{\mu(m)(\mu*\log)(n)\chi(m)\overline{\chi}(n)\lambda_f(m)\lambda_f(n)Q[mn]}{\psi(m)\overline{\psi}(n)\sqrt{mn}}.
\end{equation}
Here $\psi(m)=\prod_{p|m}(1+\tfrac{\chi^2(p)}{p})$.

A way to (informally) explain the use of our mollifier is to look for a mollifier for the $k$-th derivative. Consider the functional equation in the asymmetric form
\begin{equation*}
L(f.\chi,s)=X(f.\chi,s)L(f.\overline{\chi},1-s),
\end{equation*}
where
\begin{equation*}
X(f.\chi,s)=\varepsilon_{f.\chi}\hat{q}^{1-2s}\Gamma\big(\tfrac{3}{2}-s\big)/\Gamma\big(s+\tfrac{1}{2}\big).
\end{equation*}
Differentiating both sides yields
\begin{eqnarray*}
L'(f.\chi,s)&=&X'(f.\chi,s)L(f.\overline{\chi},1-s)-X(f.\chi,s)L'(f.\overline{\chi},1-s)\nonumber\\
&=&L(f.\chi,s)\bigg(\frac{X'}{X}(f.\chi,s)-\frac{L'}{L}(f.\overline{\chi},1-s)\bigg).
\end{eqnarray*}
We note that
\begin{equation*}
\frac{X'}{X}(f.\chi,s)=-2\mathscr{L}+O\bigg(\frac{1}{1+|t|}\bigg).
\end{equation*}
So taking differentiation of the above expression $(k-1)$ times and heuristically ignoring various (presumably) lower order terms we have
\begin{equation*}
L^{(k)}(f.\chi,s)=(-1)^kL(f.\chi,s)\bigg((2\mathscr{L})^k+k(2\mathscr{L})^{k-1}\frac{L'}{L}(f.\overline{\chi},1-s)+\ldots\bigg).
\end{equation*}
Hence
\begin{equation*}
\frac{1}{L^{(k)}(f.\chi,s)}=(-1)^k(2\mathscr{L})^{-k}\bigg(\frac{1}{L(f.\chi,s)}-\frac{k}{2\mathscr{L}}\frac{1}{L(f.\chi,s)}\frac{L'}{L}(f.\overline{\chi},1-s)+\ldots\bigg).
\end{equation*}
We note that (informally)
\begin{equation*}
\frac{1}{L(f.\chi,\frac{1}{2})}\approx\sum_{m}\frac{\mu(m)\chi(m)\lambda_f(m)}{\sqrt{m}}
\end{equation*}
and
\begin{equation*}
\frac{1}{L(f.\chi,\frac{1}{2})}\frac{L'}{L}(f.\overline{\chi},\tfrac{1}{2})\approx\sum_{m,n}\frac{\mu(m)(\mu*\log)(n)\chi(m)\overline{\chi}(n)\lambda_f(m)\lambda_f(n)}{\sqrt{mn}}.
\end{equation*}
This suggests that our function $M(f,\chi)=M_1(f,\chi)+M_2(f,\chi)$ mollifies the large values of $L(f.\chi,\tfrac{1}{2})$ and all of its derivatives at the same time.

\begin{remark}
\emph{As mentioned in Remark 1.1, our method only works when $\chi$ is nontrivial and nonquadratic. In the case $\chi$ is real, the multiplicativity property of the coefficients $\lambda_f(n)$ implies that the mollifiers $M_1(f,\chi)$ and $M_2(f,\chi)$, as defined in \eqref{509} and \eqref{520}, essentially have the same shape. Both of them are particular cases of the general mollifier \eqref{511}, and hence the two-piece mollifier does not give any improvement to [\textbf{\ref{IS}},\textbf{\ref{KMV}}].}
\end{remark}

\subsection{Setting up}

Our objects of study are high derivatives of $L$-functions, so it is best to work with shifted moments. We define the harmonic average $\sum_{f}^{h}A_f$ to be
\begin{equation*}
{\sum_{f\in S_{2}^{*}(q)}\!\!\!}^{h}\ A_f:=\sum_{f\in S_{2}^{*}(q)}w_f A_f,
\end{equation*}
where $w_f=1/4\pi(f,f)$, with $(f,g)$ being the Petersson inner product on $\Gamma_0(q))\backslash\mathbb{H}$. The advantage of the weights $w_f$ is to make use of the Petersson formula (see Lemma 3.2), which clearly shows strong cancellations in the average of the product $\lambda_f(m)\lambda_f(n)$ over $f\in S_{2}^{*}(q)$ when $m\ne n$.

Let
\begin{eqnarray*}
&&I_k(\alpha)={\sum_{f\in S_{2}^{*}(q)}\!\!\!\!}^{h}\ L(f.\chi,\tfrac{1}{2}+\alpha)M_k(f,\chi),\\
&&J_k(\alpha,\beta)={\sum_{f\in S_{2}^{*}(q)}\!\!\!\!}^{h}\ L(f.\chi,\tfrac{1}{2}+\alpha)L(f.\overline{\chi},\tfrac{1}{2}+\beta)|M_k(f,\chi)|^2,
\end{eqnarray*}
with $k\in\{1,2\}$, and 
\begin{equation*}
J_3(\alpha,\beta)={\sum_{f\in S_{2}^{*}(q)}\!\!\!\!}^{h}\ L(f.\chi,\tfrac{1}{2}+\alpha)L(f.\overline{\chi},\tfrac{1}{2}+\beta)M_1(f,\chi)\overline{M_2(f,\chi)}.
\end{equation*}
In the subsequent Sections 4--8, we shall prove the following lemmas.

\begin{lemma}\label{310}
Suppose $\Delta_1,\Delta_2<1$. Uniformly for $\alpha\ll \mathscr{L}^{-1}$ we have
\begin{eqnarray*}
I_1(\alpha)=\frac{L(2,\chi^4)}{L(1,\chi^2)}P(1)+O(\mathscr{L}^{-1})
\end{eqnarray*}
and
\begin{eqnarray*}
I_2(\alpha)=\frac{L(2,\chi^4)}{L(1,\chi^2)}\bigg(\Delta_2\int_{0}^{1}y_{2}^{-\alpha(1-x)}Q(x)dx-\tfrac{\Delta_2}{2}Q_1(1)\bigg)+O(\mathscr{L}^{-1}),
\end{eqnarray*}
where
\begin{equation*}
Q_1(x)=\int_{0}^{x}Q(r)dr.
\end{equation*}
\end{lemma}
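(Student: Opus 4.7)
The plan is to compute both $I_1$ and $I_2$ by the standard mollifier recipe: open $L(f.\chi,\tfrac{1}{2}+\alpha)$ via an approximate functional equation, swap the order of summation so that the harmonic average acts on a product of Hecke eigenvalues, invoke the Petersson trace formula (Lemma 3.2) to split off the diagonal from the Kloosterman sum contribution, and finally evaluate the resulting arithmetic sum by Mellin inversion and a contour shift that picks up the residue at the relevant pole.

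For $I_1$, after the approximate functional equation and interchange of summations, the main piece produces $\sum^{h}_{f}\lambda_f(m)\lambda_f(n)$. The off-diagonal in Petersson is controlled by the Weil bound for $S(m,n;cq)$ together with $J_1(x)\ll x$ for $x$ small, which suffices because $m\leq y_1\ll q^{1/2}$ while the approximate functional equation truncates $n$ at $\hat{q}^{1+\varepsilon}$. The dual piece carries $\varepsilon_{f.\chi}$, inserting an extra $\sqrt{q}\,\lambda_f(q)$; Hecke multiplicativity then forces a diagonal at $m=qn$, which is impossible in the support range of $m$, so only a negligible off-diagonal survives. The surviving diagonal contribution is
\begin{equation*}
\sum_m \frac{\mu(m)\chi^2(m)P[m]}{\psi(m)\,m^{1+\alpha}}\,V_\alpha(m/\hat{q}),
\end{equation*}
and Mellin-inverting both $V_\alpha$ and $P[m]$ reduces the task to evaluating the Dirichlet series $F(s)=\sum_m \mu(m)\chi^2(m)/(\psi(m)\,m^s)$ near $s=1$. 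A direct Euler factor computation shows $F(1)L(1,\chi^2)=L(2,\chi^4)$, so $F(1)=L(2,\chi^4)/L(1,\chi^2)$; shifting the $V_\alpha$-contour across $s=0$ extracts this value times the normalisation $P(1)$, with everything else absorbed into $O(\mathscr{L}^{-1})$.

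For $I_2$ the strategy is identical but one extra eigenvalue is present. After the approximate functional equation one encounters $\sum^{h}_{f}\lambda_f(h)\lambda_f(k)\lambda_f(n)$; applying Hecke multiplicativity $\lambda_f(k)\lambda_f(n)=\sum_{d\mid(k,n)}\lambda_f(kn/d^2)$ and then Petersson yields the diagonal constraint $hd^2=kn$, and the dual sum again produces only an off-diagonal remainder that is absorbed into the error. This leaves a triple sum in $h,k,d$ with weight $\mu(h)(\mu*\log)(k)\chi(h)\overline{\chi}(k)/(\psi(h)\overline{\psi}(k))$; the Dirichlet series in $k$ is essentially $-L'/L(\,\cdot\,,\overline{\chi})$. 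Representing $Q[hk]$ and $V_\alpha$ as Mellin integrals and shifting the double contour produces two contributions: a continuous residue that reassembles, after the natural change of variable adapted to the length $y_2$, into $\Delta_2\int_0^1 y_2^{-\alpha(1-x)}Q(x)\,dx$; and a secondary residue from the logarithmic weight that contributes $-\tfrac{\Delta_2}{2}Q_1(1)$. The prefactor $L(2,\chi^4)/L(1,\chi^2)$ emerges exactly as in $I_1$, since the $h$-sum supplies the same Euler product $F$.

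The main obstacle will be the bookkeeping for $I_2$: one must track the interplay between the $(\mu*\log)$-twist, the polynomial weight $Q$, and the two Mellin variables, and verify that the combination of local factors collapses cleanly to $L(2,\chi^4)/L(1,\chi^2)$ without spurious local debris. One must also keep every error uniform in $\alpha\ll\mathscr{L}^{-1}$, so that later differentiation in $\alpha$ to access $L^{(k)}(f.\chi,\tfrac{1}{2})$ does not inflate the error. The off-diagonal estimate for $I_2$ is slightly more delicate than for $I_1$ because of the triple product of Hecke eigenvalues, but it remains manageable after reducing to pairs via Hecke multiplicativity.
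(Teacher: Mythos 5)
Your overall skeleton for the diagonal evaluation is sound: Mellin-inverting the mollifier weight and the cut-off, shifting contours, and recognising the Euler product $L(2,\chi^4)/L(1,\chi^2)$ (for $I_2$ via a secondary differentiation coming from the $\mu*\log$ factor and the $\zeta'/\zeta$ residue giving the $-\tfrac{\Delta_2}{2}Q_1(1)$ term) is essentially what the paper does. But there is a genuine gap in how you open up the $L$-value, and it is not merely cosmetic.

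You propose to use a \emph{symmetric} approximate functional equation, so that $L(f.\chi,\tfrac12+\alpha)$ decomposes into a main piece plus a dual piece weighted by $\varepsilon_{f.\chi}=\chi(-q)\sqrt{q}\,\lambda_f(q)\,\tau(\chi)/\tau(\overline{\chi})$, each Dirichlet sum running to about $\hat q^{1+\varepsilon}$. For the dual piece you correctly note that Hecke multiplicativity turns $\lambda_f(q)\lambda_f(n)$ into $\lambda_f(qn)$ and that the Petersson diagonal $m=qn$ is empty. But your claim that ``only a negligible off-diagonal survives,'' justified by the Weil bound alone, does not hold. Applying Lemma~3.2 with arguments $(m,qn)$ and $(m,q)=1$ gives the Kloosterman contribution $\ll (mqn)^{1/2}q^{-3/2}=(mn)^{1/2}q^{-1}$, and the extra $\sqrt{q}$ from $\varepsilon_{f.\chi}$ yields $(mn)^{1/2}q^{-1/2}$ per term. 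Summing $m\le y_1\asymp q^{\Delta_1/2}$ and $n\ll\hat q^{1+\varepsilon}\asymp q^{1/2+\varepsilon}$ against the $1/\sqrt{mn}$ weight gives a total of size $q^{\Delta_1/2+\varepsilon}$, which is \emph{polynomially large}, not an acceptable error. To salvage this one would have to exploit the degeneracy of $S(m,qn;cq)$ when $q$ divides the second entry (twisted multiplicativity reduces the mod-$q$ factor to a Ramanujan sum $=-1$, so $S(m,qn;cq)=-S(m\overline{q},n;c)$), which gains the missing $q^{1/2}$. Your proposal never takes this step; it treats the dual off-diagonal as ``the same'' as the main one, which it is not, precisely because of the $\sqrt q$ in $\varepsilon_{f.\chi}$. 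The same problem recurs in your treatment of $I_2$.

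The paper avoids this entirely by using a one-sided, ``long'' Dirichlet series (Lemma~3.4), obtained by shifting a contour and using a crude $L$-function bound in the left half-plane rather than the functional equation. That gives a single sum of length $\hat q^{2+\varepsilon}$, with no $\varepsilon_{f.\chi}$ factor at all, and then the naive Lemma~3.2 bound on the off-diagonal already produces the admissible error $q^{-1/2+\Delta_1/2+\varepsilon}$ (respectively $q^{-1/2+\Delta_2/2+\varepsilon}$). You should either adopt this one-sided representation or add the explicit Kloosterman-sum reduction described above before asserting the dual piece is negligible.

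As a minor point, your description of the $I_2$ evaluation is vaguer than it should be: the secondary term $-\tfrac{\Delta_2}{2}Q_1(1)$ arises from differentiating a ratio $\zeta(1+\alpha+\gamma+w)\zeta(1+2w)/\zeta(1+\alpha+w)\zeta(1+\gamma+2w)$ in $\gamma$ (see \eqref{515} and \eqref{10} in the paper), not directly from ``a secondary residue from the logarithmic weight'' in the way you describe; but this is a matter of bookkeeping rather than a conceptual error.
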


\begin{lemma}\label{305}
Suppose $\Delta_1<1$. Uniformly for $\alpha,\beta\ll \mathscr{L}^{-1}$ we have
\begin{eqnarray}\label{300}
J_1(\alpha,\beta)&=&\bigg|\frac{L(2,\chi^4)}{L(1,\chi^2)}\bigg|^2\frac{d^2}{dadb}\int_{0}^{1}\int_{0}^{1}y_{1}^{\alpha b+\beta a}(\hat{q}^2y_{1}^{a+b})^{-(\alpha+\beta)t}\nonumber\\
&&\qquad\qquad\big(2\Delta_{1}^{-1}+a+b\big)P(x+a)P(x+b)dxdt\bigg|_{a=b=0}+O(\mathscr{L}^{-1}).
\end{eqnarray}
\end{lemma}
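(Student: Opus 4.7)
First, I apply an approximate functional equation to the product $L(f.\chi,\tfrac{1}{2}+\alpha)L(f.\overline{\chi},\tfrac{1}{2}+\beta)$. Since $\varepsilon_{f.\chi}\varepsilon_{f.\overline{\chi}}=1$, the AFE for this product is self-dual:
$$L(f.\chi,\tfrac{1}{2}+\alpha)L(f.\overline{\chi},\tfrac{1}{2}+\beta)=\sum_{m,n\geq1}\frac{\chi(m)\overline{\chi}(n)\lambda_f(m)\lambda_f(n)}{m^{1/2+\alpha}n^{1/2+\beta}}V_{\alpha,\beta}\!\left(\frac{mn}{\hat{q}^{2}}\right)+\bigl((\alpha,\beta)\leftrightarrow(-\beta,-\alpha)\bigr),$$
where $V_{\alpha,\beta}(\xi)=\frac{1}{2\pi i}\int_{(c)}\xi^{-s}G_{\alpha,\beta}(s)\frac{ds}{s}$ is the standard smooth cutoff encoding the ratio of gamma factors. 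Multiplying by $|M_1(f,\chi)|^{2}=M_1(f,\chi)\overline{M_1(f,\chi)}$ and expanding the mollifier yields a quadruple Fourier-coefficient sum in $m,n,m_1,m_2$; two applications of Hecke multiplicativity collapse $\lambda_f(m_1)\lambda_f(m_2)\lambda_f(m)\lambda_f(n)$ into a linear combination of products $\lambda_f(A)\lambda_f(B)$, so that the Petersson trace formula from Section~3 applies directly.

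Second, the Petersson formula splits $J_1(\alpha,\beta)$ into a diagonal piece ($A=B$) and an off-diagonal Kloosterman contribution. For the diagonal, I write each $P[m_i]$ via its Mellin representation
$$P[m_i]=\frac{1}{2\pi i}\int_{(\epsilon)}\left(\frac{y_1}{m_i}\right)^{\!a_i}\widetilde{P}(a_i)\,da_i\qquad(i=1,2),$$
and use the $V_{\alpha,\beta}$-contour to separate the variables. The resulting multifold Dirichlet sum factorises over primes $p\nmid q$ into an Euler product whose singular behaviour is captured by $\frac{L(2,\chi^{4})}{L(1,\chi^{2})}\cdot L(1+s+a_1+a_2+\alpha+\beta,\chi^{2})^{-1}$ (times a function holomorphic on the relevant tube). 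Because $\chi^{2}$ is nontrivial, $L(s,\chi^{2})$ is entire, so all polar behaviour comes from the Mellin variables $s,a_1,a_2$ themselves. Shifting these contours to the left past $0$, and performing the standard change of variables $a_i=a+x$, $a_j=b+x$ (so that $\widetilde{P}(a_i)\,da_i\to P(x+a)\,dx$), the residues coalesce into the stated double integral in $x$ and $t$. The prefactors $y_1^{\alpha b+\beta a}$ and $(\hat{q}^{2}y_1^{a+b})^{-(\alpha+\beta)t}$ record the $\alpha,\beta$-shifts inherited from the AFE weight, while the combinatorial factor $2\Delta_1^{-1}+a+b$ arises from the bookkeeping of $\log y_1$-scales in the Mellin inversion together with the residue at the principal pole.

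The main obstacle is bounding the off-diagonal Kloosterman contribution uniformly for $\alpha,\beta\ll\mathscr{L}^{-1}$. Each Kloosterman frequency contributes $S(A,B;cq)/(cq)\cdot J_1(4\pi\sqrt{AB}/(cq))$, and Weil's bound $|S(A,B;cq)|\ll(cq)^{1/2+\varepsilon}(AB,cq)^{1/2}$ combined with $J_1(x)\ll\min(x,x^{-1/2})$ saves a factor of $q^{-1/2+\varepsilon}$ per frequency, provided the effective cutoffs $mn\leq\hat{q}^{2+\varepsilon}$ (from the AFE) and $m_1 m_2\leq y_1^{2}=\hat{q}^{2\Delta_1}$ (from the mollifier) yield $AB\ll q^{4-\delta}$ for some fixed $\delta>0$. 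This is exactly where the hypothesis $\Delta_1<1$ is essential. Since Hecke multiplicativity collapses the four-fold product onto indices with shared divisors, tracking this bound uniformly in the resulting arithmetic constraints, while simultaneously preserving the holomorphy of the auxiliary Euler products in the narrow strip $|\alpha|,|\beta|\ll\mathscr{L}^{-1}$, is the most delicate technical step of the argument.
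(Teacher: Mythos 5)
Your proposal has the right overall architecture---approximate functional equation for the product (Lemma 3.5), Hecke multiplicativity to collapse the four-fold $\lambda_f$-product, Petersson formula, a diagonal/off-diagonal split, and Mellin analysis of the diagonal---and this does match the paper's strategy. However, the treatment of the off-diagonal Kloosterman contribution contains a genuine error.

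You claim that Weil's bound $|S(A,B;cq)|\ll(cq)^{1/2+\varepsilon}(AB,cq)^{1/2}$ together with $J_1(x)\ll\min(x,x^{-1/2})$ ``saves a factor of $q^{-1/2+\varepsilon}$ per frequency,'' and that this suffices provided $AB\ll q^{4-\delta}$. Let us check. After Hecke, the indices entering the Kloosterman sums are $A=mn_1$ and $B=nm_1$ with $m,n\ll \hat q^{2+\varepsilon}$ (from the decay of the AFE weight) and $m_1,n_1\leq y_1$. The Petersson off-diagonal (Lemma 3.2) gives, per quadruple $(m,n,m_1,n_1)$, a contribution $\ll(mnm_1n_1)^{1/2}q^{-3/2+\varepsilon}$, and the Dirichlet weights in $J_1$ contribute $\ll(mnm_1n_1)^{-1/2}$. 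The net per-term bound is thus $\ll q^{-3/2+\varepsilon}$, and summing over $m,n\ll\hat q^{2+\varepsilon}$ and $m_1,n_1\leq y_1$ gives
\begin{equation*}
\ll q^{-3/2+\varepsilon}\cdot \hat q^{4+\varepsilon}\cdot y_1^2 \asymp q^{-3/2+2+\Delta_1+\varepsilon} = q^{1/2+\Delta_1+\varepsilon},
\end{equation*}
which is enormously larger than $1$. The constraint $AB\ll q^{4-\delta}$ is comfortably satisfied here (indeed $AB\ll q^{2+\Delta_1}$), but that is irrelevant: Weil's bound applied termwise is simply not strong enough.

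This is precisely why the paper introduces Lemma 3.3 (from Kowalski--Michel--VanderKam / VanderKam), an estimate for the \emph{averaged} Kloosterman sum over the long Dirichlet-series variables $m,n$, which delivers $q^{-1+\varepsilon}\sqrt{m_1m_2N_1N_2}$ for the whole double sum, not merely square-root cancellation per term. With this lemma the off-diagonal contribution is $\ll q^{-1+\varepsilon}\sum_{m_1,n_1\leq y_1}1\ll q^{-1+\Delta_1+\varepsilon}$, which is $o(1)$ exactly when $\Delta_1<1$ (and this is also the real reason the lemma needs $m_1m_2\ll q^{1-\delta}$, hence $\Delta_1<1$, as a hypothesis). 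You would need to invoke this averaged estimate, not raw Weil, for the error analysis to close. A secondary, lesser issue: the $t$-integral and the factor $(\hat q^2 y_1^{a+b})^{-(\alpha+\beta)t}$ in \eqref{300} do not simply ``record the AFE shifts''; they arise specifically from combining $J_1^+$ with $J_1^-(\alpha,\beta)=\hat q^{-2(\alpha+\beta)}J_1^+(-\beta,-\alpha)$ via the identity $\frac{1-z^{-(\alpha+\beta)}}{\alpha+\beta}=(\log z)\int_0^1 z^{-(\alpha+\beta)t}\,dt$, a step your sketch only alludes to.
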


\begin{lemma}\label{306}
Suppose $\Delta_2<\Delta_1<1$. Uniformly for $\alpha,\beta\ll \mathscr{L}^{-1}$ we have
\begin{eqnarray}\label{301}
&&\!\!\!\!\!\!\!\!\!\!J_3(\alpha,\beta)=\bigg|\frac{L(2,\chi^4)}{L(1,\chi^2)}\bigg|^2\frac{d^2}{dadb}\bigg\{\tfrac{\Delta_2}{\Delta_1}\int_{0}^{1}\int_{0}^{1}\int_{0}^{x}y_{1}^{\beta a}y_{2}^{\alpha b-\beta u}(\hat{q}^2y_{1}^{a}y_{2}^{b-u})^{-(\alpha+\beta)t}\big(2+\Delta_1a+\Delta_2(b-u)\big)\nonumber\\
&&\!\!\!\!\!\!\!\qquad\quad P\big(1-\tfrac{\Delta_2(1-x)}{\Delta_1}+a\big)Q(x-u+b)dudxdt-\tfrac{\Delta_2}{2\Delta_1}\int_{0}^{1}\int_{0}^{1}y_{1}^{\beta a}y_{2}^{\alpha b}(\hat{q}^2y_{1}^{a}y_{2}^{b})^{-(\alpha+\beta)t}\nonumber\\
&&\!\!\!\!\!\!\!\!\!\!\qquad\qquad\qquad \big(2+\Delta_1a+\Delta_2b\big)P\big(1-\tfrac{\Delta_2(1-x)}{\Delta_1}+a\big)Q_1(x+b)dxdt\bigg\}\bigg|_{a=b=0}+O(\mathscr{L}^{-1}).
\end{eqnarray}
\end{lemma}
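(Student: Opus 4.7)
The plan is to adapt the standard mollified second-moment technology (as in [KMV, IS]) and the two-piece mollifier refinements of [B, BCY] to the asymmetric cross-term $M_1(f,\chi)\overline{M_2(f,\chi)}$. First I would replace the product $L(f.\chi,\tfrac{1}{2}+\alpha)L(f.\overline{\chi},\tfrac{1}{2}+\beta)$ by its approximate functional equation, yielding a main piece carrying $\sum_{h_1,h_2}\chi(h_1)\overline{\chi}(h_2)\lambda_f(h_1)\lambda_f(h_2) V_{\alpha,\beta}(h_1h_2/\hat q^2)/(h_1^{1/2+\alpha}h_2^{1/2+\beta})$ and a dual piece carrying $\varepsilon_{f.\chi}\varepsilon_{f.\overline{\chi}}$; the latter is negligible because $\chi$ is nonquadratic, as in Remark 2.1 and the proof of Lemma 3.2. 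Substituting the definitions of $M_1$ and $\overline{M_2}$ then produces a quintuple sum with five $\lambda_f$ factors, which I would reduce to a bilinear form $\lambda_f(A)\lambda_f(B)$ by iterating the Hecke relation $\lambda_f(a)\lambda_f(b)=\sum_{d\mid(a,b),(d,q)=1}\lambda_f(ab/d^2)$.

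Next, apply the Petersson trace formula (Lemma 3.2) to the resulting $f$-average. The diagonal $\delta_{A=B}$ gives the main term, while the Kloosterman/Bessel off-diagonal part is admissible under the hypothesis $\Delta_2<\Delta_1<1$, which keeps the total length of summation below $\hat q^{4-\varepsilon}$ and places the $J$-Bessel argument in the regime handled by the standard Weil-bound-plus-oscillation argument of [KMV]. To evaluate the diagonal, I would encode the polynomial weights through the device
$$P\big(\tfrac{\log(y_1/m)}{\log y_1}\big) \;=\; P\big(\tfrac{1}{\log y_1}\tfrac{d}{da}\big)\, y_1^a m^{-a}\Big|_{a=0},$$
and analogously for $Q[m'n']$, converting the polynomial weights into differential operators acting on shifted Dirichlet series and producing the outer $d^2/(da\,db)$. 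The inner variable $u$ with constraint $0\le u\le x$ arises from Mellin-representing the factor $(\mu*\log)(n)/n^s$ via a contour integral of $-\zeta'(s)/\zeta(s)^2$ or equivalently by inserting an extra integral accounting for the $\log$ factor, which is the new feature absent from $J_1$.

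The local Euler-product calculation at each prime then yields an arithmetic factor whose relevant residue equals $|L(2,\chi^4)/L(1,\chi^2)|^2$; shifting the remaining contours through the poles at $\alpha+\beta=0$, $a=0$, $b=0$ and unfolding produces the main term with arguments $P(1-\Delta_2(1-x)/\Delta_1+a)$ and $Q(x-u+b)$, where the shift $1-\Delta_2(1-x)/\Delta_1$ encodes the relative positions of the two log-scales and the prefactor $\Delta_2/\Delta_1$ is the Jacobian between them. The second integral in \eqref{301} (the one carrying $Q_1$ and the prefactor $-\tfrac{\Delta_2}{2\Delta_1}$) emerges as a boundary/integration-by-parts correction when collapsing the $u$-integral against the original Dirichlet series weight, exactly parallel to the appearance of the $-\tfrac{\Delta_2}{2}Q_1(1)$ term in $I_2(\alpha)$ in Lemma 3.1.

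The main obstacle is the combination of three technical complications absent from $J_1$: the asymmetry between the one-$\lambda_f$ mollifier $M_1$ and the two-$\lambda_f$ mollifier $M_2$, which makes the Hecke manipulations combinatorially heavier; the $(\mu*\log)$ convolution inside $M_2$, which does not collapse to a single Dirichlet series and forces the extra integral variable $u$ and the constraint $0\le u\le x$; and the simultaneous presence of the two distinct log-scales $y_1,y_2$, which enter both the shifted argument of $P$ and the $\Delta_2/\Delta_1$ prefactor. The bulk of the technical effort will be in verifying that the off-diagonal Kloosterman contribution remains negligible under $\Delta_2<\Delta_1<1$, and that the local Euler factors collapse cleanly to $|L(2,\chi^4)/L(1,\chi^2)|^2$—which happens precisely because $\chi$ is nonquadratic, so that $\chi^2\ne 1$ and the spurious poles at $s=1$ that would appear in the real case are absent.
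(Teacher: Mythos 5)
Your outline tracks the paper's general strategy — approximate functional equation, Hecke relations, Petersson, Mellin-transforming the polynomial weights, the $\gamma$-differentiation device for $(\mu*\log)$, and Euler-product normalization — but it contains a genuine error at the very first step that would derail the computation.

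You propose to discard the "dual piece" of the approximate functional equation, asserting it "is negligible because $\chi$ is nonquadratic." This is wrong. In Lemma 3.5 the product $L(f.\chi,\tfrac{1}{2}+\alpha)L(f.\overline{\chi},\tfrac{1}{2}+\beta)$ splits into the $W^+_{\alpha,\beta}$ piece and the $\hat q^{-2(\alpha+\beta)}W^-_{\alpha,\beta}$ piece; since $\varepsilon_{f.\chi}\varepsilon_{f.\overline{\chi}}=1$ and $\alpha,\beta\ll\mathscr{L}^{-1}$ (so $\hat q^{-2(\alpha+\beta)}\asymp 1$), the dual piece is the same size as the main piece and cannot be dropped. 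In fact it is essential: $J_3^+(\alpha,\beta)$ alone carries a factor $\zeta(1+\alpha+\beta)\sim 1/(\alpha+\beta)$ (see the residue at $s=0$), which is singular as $\alpha+\beta\to 0$. Only after combining $J_3^+$ with $J_3^-(\alpha,\beta)=\hat q^{-2(\alpha+\beta)}J_3^+(-\beta,-\alpha)$, via the identity
\[
\frac{1-z^{-(\alpha+\beta)}}{\alpha+\beta}=(\log z)\int_0^1 z^{-(\alpha+\beta)t}\,dt,
\]
does the singularity cancel and does the $t$-integral with the factor $(\hat q^2 y_1^{a}y_2^{b-u})^{-(\alpha+\beta)t}$ appear in \eqref{301}. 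Without the dual piece you cannot produce that $t$-integral, nor can you obtain a result valid uniformly for $\alpha,\beta\ll\mathscr{L}^{-1}$. The nonquadraticity of $\chi$ plays a role elsewhere (it is what makes the two-piece mollifier genuinely different from the standard one, per Remark 2.1, and keeps the arithmetic factors free of extra $\zeta$-poles), but it has nothing to do with discarding the dual half of the functional equation.

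Two smaller inaccuracies: the generating series of $(\mu*\log)$ is $-\zeta'(s)/\zeta(s)$, not $-\zeta'(s)/\zeta(s)^2$; and the $d^2/(da\,db)$ structure in the final answer is produced, in the paper, by evaluating the residual contour integrals (via Lemma 5.1 of [B]) rather than by a differential-operator encoding of $P,Q$ at the outset — both devices can work, but they are not the same step. The boundary term carrying $Q_1$ and the prefactor $-\tfrac{\Delta_2}{2\Delta_1}$ is correctly identified as analogous to the $-\tfrac{\Delta_2}{2}Q_1(1)$ term in $I_2(\alpha)$; in the paper it arises from the $\zeta(1+\gamma+2w_2)$ factor upon $\gamma$-differentiation, i.e. the $L_{32}$ integral.
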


\begin{lemma}\label{307}
Suppose $\Delta_2<1$. Uniformly for $\alpha,\beta\ll \mathscr{L}^{-1}$ we have
\begin{eqnarray}\label{303}
J_{2}(\alpha,\beta)&=&\bigg|\frac{L(2,\chi^4)}{L(1,\chi^2)}\bigg|^2\frac{d^2}{dadb}\bigg\{\tfrac{\Delta_2}{2}\int_{0}^{1}\int_{0}^{1}y_{2}^{\alpha b+\beta a}(\hat{q}^2y_{2}^{a+b})^{-(\alpha+\beta)t}\big(2+\Delta_2(a+b)\big)(1-x)^2\\
&&\qquad\qquad\qquad\qquad\qquad\qquad\qquad Q(x+a)Q(x+b)dxdt\nonumber\\
&&\!\!\!\!\!\!\!\!\!\!\!\!\!\!\!\!\!\!\!\!\!\!\!\!\!\!\!\!\!\!\!\!\!\!\!+\Delta_2\int_{0}^{1}\int_{0}^{1}\int_{0}^{x}\int_{0}^{x}y_{2}^{\alpha b+\beta a-\alpha u-\beta v}(\hat{q}^2y_{2}^{a+b-u-v})^{-(\alpha+\beta)t}\nonumber\\
&&\qquad\qquad\qquad\big(2+\Delta_2(a+b-u-v)\big)Q(x-u+a)Q(x-v+b)dudvdxdt\nonumber\\
&&\!\!\!\!\!\!\!\!\!\!\!\!\!\!\!\!\!\!\!\!\!\!\!\!\!\!\!\!\!\!\!\!\!\!\!-\tfrac{\Delta_2}{2}\int_{0}^{1}\int_{0}^{1}\int_{0}^{x}y_{2}^{\alpha b+\beta a-\alpha u}(\hat{q}^2y_{2}^{a+b-u})^{-(\alpha+\beta)t}\big(2+\Delta_2(a+b-u)\big)Q(x-u+a)Q_1(x+b)dudxdt\nonumber\\
&&\!\!\!\!\!\!\!\!\!\!\!\!\!\!\!\!\!\!\!\!\!\!\!\!\!\!\!\!\!\!\!\!\!\!\!-\tfrac{\Delta_2}{2}\int_{0}^{1}\int_{0}^{1}\int_{0}^{x}y_{2}^{\alpha a+\beta b-\beta u}(\hat{q}^2y_{2}^{a+b-u})^{-(\alpha+\beta)t}\big(2+\Delta_2(a+b-u)\big)Q(x-u+a)Q_1(x+b)dudxdt\nonumber
\end{eqnarray}
\begin{equation*}
+\tfrac{\Delta_2}{4}\int_{0}^{1}\int_{0}^{1}y_{2}^{\alpha b+\beta a}(\hat{q}^2y_{2}^{a+b})^{-(\alpha+\beta)t}\big(2+\Delta_2(a+b)\big)Q_1(x+a)Q_1(x+b)dxdt\bigg\}\bigg|_{a=b=0}+O(\mathscr{L}^{-1}).
\end{equation*}
\end{lemma}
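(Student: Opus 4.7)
The plan is to evaluate $J_2(\alpha,\beta)$ by the standard mollified-second-moment machinery, keeping the shifts $\alpha,\beta$ alive so that the five integrals in \eqref{303} arise from a single symmetric computation. First I would replace $L(f.\chi,\tfrac{1}{2}+\alpha)L(f.\overline{\chi},\tfrac{1}{2}+\beta)$ by a symmetric approximate functional equation of the form
\[
L(f.\chi,\tfrac{1}{2}+\alpha)L(f.\overline{\chi},\tfrac{1}{2}+\beta)=\sum_{h_1,h_2\geq 1}\frac{\chi(h_1)\overline{\chi}(h_2)\lambda_f(h_1)\lambda_f(h_2)}{h_1^{1/2+\alpha}h_2^{1/2+\beta}}\,V_{\alpha,\beta}\!\left(\frac{h_1h_2}{\hat{q}^2}\right)+(\text{dual}),
\]
where the dual term is obtained by $(\alpha,\beta)\mapsto(-\beta,-\alpha)$ and carries the prefactor $\varepsilon_{f.\chi}\varepsilon_{f.\overline{\chi}}\hat{q}^{-2(\alpha+\beta)}$; after a Mellin representation of $V_{\alpha,\beta}$ this prefactor is precisely what generates the factors $(\hat{q}^2 y_2^{\ldots})^{-(\alpha+\beta)t}$ visible throughout \eqref{303}. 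Expanding $|M_2(f,\chi)|^2$ as a quadruple sum over $(m_1,n_1,m_2,n_2)$ with $m_j n_j\leq y_2$ and using Hecke multiplicativity to collapse the product $\lambda_f(m_1)\lambda_f(n_1)\lambda_f(m_2)\lambda_f(n_2)\lambda_f(h_1)\lambda_f(h_2)$ into a linear combination of single Hecke eigenvalues $\lambda_f(N)$ puts the harmonic average in exactly the form to which Lemma 3.2 applies.

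Applying Petersson separates a diagonal term from an off-diagonal of Kloosterman--Bessel shape. Since $\Delta_2<1$ by hypothesis, the total length of the six summation variables is $\ll y_2^2\hat{q}^2=\hat{q}^{2+2\Delta_2}=o(q^2)$, so the Weil bound for $S(m,n;cq)$ combined with the small-argument expansion of $J_1$ yields an off-diagonal contribution of size $O(\mathscr{L}^{-A})$ for every fixed $A$; this is the direct analogue of Lemma 3.3 and of the estimates \eqref{240}--\eqref{242} for the present quadruple mollifier. On the diagonal, representing $Q[m_1n_1]$, $Q[m_2n_2]$ and $V_{\alpha,\beta}$ by their Mellin transforms yields a multiple contour integral whose integrand is an arithmetic Dirichlet series in variables $s_1,s_2,s$ with shifts $\alpha,\beta$. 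Sorting the Euler product prime by prime, its principal singular part factors as $|L(2,\chi^4)/L(1,\chi^2)|^2$ times $\zeta_q$- and $L_q$-values at $1$ plus linear combinations of $s_1,s_2,s,\alpha,\beta$, times an absolutely convergent remainder. Shifting contours past the poles at $s_1=s_2=s=0$ and collecting residues produces the main term, the shifted contours being absorbed into the $O(\mathscr{L}^{-1})$ error. Introducing the auxiliary variables $a,b$ as in the statement to keep the emerging residues regular, then differentiating twice and setting $a=b=0$, is the cleanest way to package the answer: the five integrals in \eqref{303} correspond to the five distinct residue configurations, the first two to interior contributions (with both $Q$'s intact, one with $(1-x)^2$ coming from the ``short'' diagonal $m_1 n_1 = m_2 n_2$ and one with fourfold $u,v$ integration from the ``long'' diagonal) and the remaining three to boundary contributions where one or both Mellin variables are pushed to the endpoint of the support of $Q[\cdot]$, producing the antiderivative $Q_1$ in place of a $Q$.

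The main obstacle will be the prime-by-prime verification that the arithmetic Euler product collapses to exactly $|L(2,\chi^4)/L(1,\chi^2)|^2$ times an analytic remainder. With $\chi$ and $\overline{\chi}$ occupying four positions and the denominators $\psi,\overline{\psi}$ encoding $\chi^2$, one must carefully track the local factors at primes where $\chi(p)$ is $0$, $\pm 1$, or a nonreal root of unity, and confirm that the singular contribution in each case matches the claim. The nonquadratic hypothesis on $\chi$ is decisive here: for $\chi^2$ principal, $L(1,\chi^2)$ becomes a pole rather than a nonzero value, and the two pieces of the mollifier essentially coincide, as discussed in Remark 2.1. Once this local computation is in hand, the remaining contour-shift bookkeeping follows the template of Conrey--Snaith [\textbf{\ref{CS}}] and the analogous two-piece computations of [\textbf{\ref{BCY}}], and the formula \eqref{303} reads off directly.
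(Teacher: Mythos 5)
Your overall plan follows the same architecture as the paper (approximate functional equation, Hecke expansion, Petersson, Mellin transforms, contour shifts, arithmetic Euler product), but the off-diagonal estimate as you state it is wrong, and this is a genuine gap, not a cosmetic one. You claim that the Weil bound for $S(m,n;cq)$ together with $J_1(x)\ll x$ yields $O(\mathscr{L}^{-A})$ for the off-diagonal because "the total length of the six summation variables is $\ll y_2^2\hat{q}^2=o(q^2)$." But run the numbers: after Hecke the Petersson formula produces terms of size $(M N)^{1/2}q^{-3/2}$ with $M=mm_1m_2$, $N=nn_1n_2$, and these cancel the $(m_1m_2n_1n_2mn)^{-1/2}$ weights exactly, leaving roughly $q^{-3/2}$ per tuple. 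The number of tuples with $m_1m_2,n_1n_2\leq y_2$ and $mn\ll\hat{q}^{2+\varepsilon}$ is $\asymp y_2^2\hat{q}^2\asymp q^{1+\Delta_2}$ up to logs, so Weil alone gives $O(q^{\Delta_2-1/2+\varepsilon})$ — admissible only for $\Delta_2<1/2$, not the claimed $\Delta_2<1$. The paper reaches $\Delta_2<1$ precisely because it uses Lemma 3.3, the averaged Kloosterman-sum bound from Kowalski–Michel–VanderKam/VanderKam, which extracts an extra factor of $q^{-1/2}$ by exploiting cancellation in the $n_1,n_2$ average; this is not a "direct analogue" of the Weil estimate but a genuinely stronger, non-elementary input. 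Presenting it as an elementary bound hides the hardest arithmetic ingredient of the lemma.

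A secondary issue: your description of the five integrals as coming from a "short diagonal $m_1n_1=m_2n_2$" and a "long diagonal" does not match the actual structure. After the Hecke reductions the diagonal is the single condition $mm_1m_2=nn_1n_2$, and the five integrals in \eqref{303} arise from a different mechanism: the $(\mu*\log)$ factors in $M_2$ are handled by writing $(\mu*\log)(m_2)=-\frac{d}{d\gamma}\sum_{m_{21}m_{22}=m_2}\mu(m_{21})m_{22}^{-\gamma}\big|_{\gamma=0}$, which introduces two auxiliary parameters $\gamma_1,\gamma_2$. After the contour shifts the generating function carries combinations like $\frac{\zeta_q'}{\zeta_q}(1+\alpha+w_1)-\frac{\zeta_q'}{\zeta_q}(1+2w_1)$ (and its $w_2$ counterpart) together with a $\zeta''-\zeta'^2/\zeta$ term from the double $\gamma$-derivative; these split into the $L_{21}$ and $L_{22}$ pieces that the paper evaluates by citing its own earlier Dirichlet $L$-function computations ([\textbf{\ref{B}}], Lemmas 6.1 and 6.2), not by the Conrey–Snaith/BCY templates you invoke. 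Without the $\gamma$-derivative device you would not see how the $Q$ versus $Q_1$ distinction and the $(1-x)^2$ weight emerge, and your residue-configuration picture would not reconstruct \eqref{303} even if the off-diagonal were fixed.
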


The deductions of Theorem 1.1 and Theorem 1.2 are done in Section 9 and Section 10, respectively.

\section{Various lemmas}

In this section we collect some preliminary results which we need to use later. 

\begin{lemma}\label{101} \emph{(Hecke's recursion formula)}
For $m,n\geq1$ and $f\in S_{2}^{*}(q)$, we have
\begin{displaymath}
\lambda_{f}(m)\lambda_{f}(n)=\sum_{\substack{d|(m,n)\\(d,q)=1}}\lambda_{f}\bigg(\frac{mn}{d^2}\bigg).
\end{displaymath}
\end{lemma}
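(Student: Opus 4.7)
The plan is to prove Hecke's recursion formula by reducing to prime powers via multiplicativity and then applying the standard three-term recursion satisfied by the Hecke eigenvalues of a primitive form.

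First, I would invoke the fact that $\lambda_f$ is multiplicative, i.e.\ $\lambda_f(ab) = \lambda_f(a)\lambda_f(b)$ whenever $(a,b)=1$. Both sides of the identity are jointly multiplicative in $(m,n)$ in the following sense: if we factor $m = \prod_p p^{a_p}$ and $n = \prod_p p^{b_p}$, then $\lambda_f(m)\lambda_f(n) = \prod_p \lambda_f(p^{a_p})\lambda_f(p^{b_p})$, while the divisor sum on the right factors as $d = \prod_p p^{d_p}$ with $0 \leq d_p \leq \min(a_p,b_p)$, producing a product over $p$ of analogous sums. Hence it suffices to verify the identity in the case $m = p^a$, $n = p^b$ for a single prime $p$.

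Second, I would split into two cases according to whether $p$ divides the level $q$. If $p \mid q$, the coprimality constraint $(d,q)=1$ forces $d=1$, so the right-hand side collapses to $\lambda_f(p^{a+b})$; on the other hand, the Hecke relations for primes dividing the level give $\lambda_f(p^r) = \lambda_f(p)^r$, whence $\lambda_f(p^a)\lambda_f(p^b) = \lambda_f(p)^{a+b} = \lambda_f(p^{a+b})$ as required. If $p \nmid q$, I would assume without loss of generality $a \leq b$ and proceed by induction on $a$. The base case $a=0$ is immediate since only $d=1$ contributes. For the inductive step, use the three-term recursion $\lambda_f(p)\lambda_f(p^r) = \lambda_f(p^{r+1}) + \lambda_f(p^{r-1})$ (valid for $p \nmid q$) to write
\begin{equation*}
\lambda_f(p^a)\lambda_f(p^b) = \lambda_f(p)\lambda_f(p^{a-1})\lambda_f(p^b) - \lambda_f(p^{a-2})\lambda_f(p^b),
\end{equation*}
apply the induction hypothesis to each of the two lower-order products, and combine and reindex the resulting sums to recover $\sum_{d=0}^{a} \lambda_f(p^{a+b-2d})$, which is precisely the claim for this prime.

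The proof contains no analytic content whatsoever; the only real work is the combinatorial bookkeeping in the inductive step, and even this is entirely standard. The identity is classical and can be found in many references on Hecke theory, so the main obstacle is merely clean exposition rather than genuine difficulty.
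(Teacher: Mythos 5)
The paper gives no proof of this lemma; it is quoted as a classical identity (``Hecke's recursion formula''), so there is no argument in the text to compare against. Your proof is the standard derivation and is essentially complete: multiplicativity of $\lambda_f$ reduces the identity to prime powers, the ramified case $p\mid q$ is handled via $\lambda_f(p^r)=\lambda_f(p)^r$, and the unramified case follows by induction on $\min(a,b)$ using the three-term Hecke recursion. The combinatorial telescoping in your inductive step checks out: after applying $\lambda_f(p)\lambda_f(p^{r})=\lambda_f(p^{r+1})+\lambda_f(p^{r-1})$ to each summand (legal since $a\le b$ guarantees $r\ge1$ throughout), the two lower-order sums cancel except for the single boundary term $\lambda_f(p^{b-a})$, which is exactly the $d=a$ term needed. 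One small expository caveat: your inductive step invokes $\lambda_f(p^{a-2})$, which is undefined at $a=1$ unless you either verify $a=1$ as a second base case or adopt the harmless convention $\lambda_f(p^{-1})=0$; with either fix the argument is airtight.
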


\begin{lemma}\label{102}
For $m,n\geq1$ we have
\begin{displaymath}
{\sum_{f\in S_{2}^{*}(q)}\!\!\!\!}^{h}\ \lambda_{f}(m)\lambda_{f}(n)=\delta_{m,n}-2\pi\sum_{c\geq1}\frac{S(m,n;cq)}{cq}J_{1}\bigg(\frac{4\pi\sqrt{mn}}{cq}\bigg),
\end{displaymath}
where $\delta_{m,n}$ is the Kronecker symbol, $J_{1}(x)$ is the Bessel function of order $1$ and $S(m,n;c)$ is the Kloosterman sum
\begin{displaymath}
S(m,n;c)=\sum_{a(\emph{mod}\
c)}{\!\!\!\!\!\!}^{\displaystyle{*}}\ e\bigg(\frac{ma+n\overline{a}}{c}\bigg).
\end{displaymath}
Moreover we have the estimate
\begin{equation*}
\sum_{c\geq1}\frac{S(m,n;cq)}{cq}J_{1}\bigg(\frac{4\pi\sqrt{mn}}{cq}\bigg)\ll (m,n,q)^{1/2}(mn)^{1/2}q^{-3/2}.
\end{equation*}
\end{lemma}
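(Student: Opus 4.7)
The plan is to deduce Lemma \ref{102} from the classical Petersson trace formula on the full cuspidal space, using the fact that at prime level in weight $2$ there are no oldforms, and then to bound the Kloosterman–Bessel sum by combining Weil's bound with the uniform asymptotics of $J_1$.

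For the first assertion, let $\mathcal{B}$ denote a Hecke eigenbasis of $S_2(q)$, orthogonal with respect to the Petersson inner product. The standard Petersson trace formula (see e.g. Iwaniec, \emph{Topics in Classical Automorphic Forms}, Ch.~3) asserts that
$$\sum_{f\in\mathcal{B}} w_f\,\lambda_f(m)\lambda_f(n) \;=\; \delta_{m,n}-2\pi\sum_{c\geq 1}\frac{S(m,n;cq)}{cq}\,J_1\!\left(\frac{4\pi\sqrt{mn}}{cq}\right),$$
with $w_f=1/(4\pi(f,f))$, the condition $q\mid c$ in the Kloosterman sum arising from the fact that $\Gamma_0(q)$ has a single cusp class at $\infty$ of width matching the standard normalisation. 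For general levels $\mathcal{B}$ mixes new and old forms, but for prime $q$ every oldform in $S_2(q)$ must be induced from $S_2(1)$ via the two standard degeneracy maps, and $\dim S_2(1)=0$. Consequently $S_2(q)=S_2^*(q)$, and $\mathcal{B}$ can be taken to be the set of primitive normalised Hecke eigenforms, giving the displayed identity in the lemma.

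For the second assertion, I would combine Weil's bound
$$|S(m,n;cq)|\ll (m,n,cq)^{1/2}(cq)^{1/2+\varepsilon}$$
with the uniform estimate $J_1(x)\ll\min(x,x^{-1/2})$, and split the $c$-sum at the transition point $c_0=4\pi\sqrt{mn}/q$. On the tail $c\geq c_0$ one uses $J_1(x)\ll x$, while on the initial segment $c<c_0$ (non-empty only when $mn\geq q$) one uses $J_1(x)\ll x^{-1/2}$. After factoring $(m,n,cq)\leq (m,n,q)(m,n,c)$ and absorbing the divisor function into the implicit constant, each piece becomes a routine sum; the tail range dominates and yields the bound $(m,n,q)^{1/2}(mn)^{1/2}q^{-3/2}$ claimed in the lemma.

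There is no genuine obstacle here, since both the vanishing $S_2(1)=\{0\}$ at prime level in weight two and the Weil and $J_1$ estimates are classical. The only mild bookkeeping subtlety is tracking the gcd factor $(m,n,cq)$ as $c$ interacts with $q$, which is handled by the elementary inequality above; once this is done, both pieces of the estimate collapse to geometric-type sums.
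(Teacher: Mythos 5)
Your proof of the trace-formula identity follows the same route as the paper: apply the Petersson formula for $\Gamma_0(q)$ in weight $2$, and note that at prime level $q$ the oldform space is induced from $S_2(1)=\{0\}$, so an orthogonal Hecke basis of $S_2(q)$ consists exactly of the primitive forms $S_2^*(q)$. That is correct and is exactly what the paper means by ``a special case of the Petersson formula for weight $2$ and prime level $q$.''

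For the second assertion the paper is terser than you: it uses only the single uniform bound $J_1(x)\ll x$ (which holds for \emph{all} $x>0$, e.g.\ from $|J_1(x)|\le |x|/2$ via the integral representation) together with Weil's bound, and the resulting series $\sum_c (m,n,cq)^{1/2}(cq)^{-3/2+\varepsilon}\sqrt{mn}$ already converges and gives the stated estimate up to an $\varepsilon$-power of $(m,n)$ coming from the divisor factor in Weil. Your dyadic-style split at $c_0=4\pi\sqrt{mn}/q$ with $J_1(x)\ll\min(x,x^{-1/2})$ is a valid alternative and in fact gives a slightly sharper bound on the initial range, but it is unnecessary extra machinery here: no transition-point analysis is needed because $J_1(x)\ll x$ is not restricted to small $x$ and the $c^{-3/2}$ decay alone closes the sum. (A tiny slip: the initial segment is non-empty when $mn\gg q^2$, not $mn\geq q$, but this does not affect the argument.) Both approaches, like the paper, silently absorb the $(m,n)^{\varepsilon}$ (or $\tau(cq)$) contribution into the implicit constant, which is harmless in the paper's applications since $m,n$ are polynomially bounded in $q$.
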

\begin{proof}
The first part of the lemma is a special case of the Petersson formula for weight $2$ and prime level $q$. The second part follows easily from the bound $J_1(x)\ll x$ and Weil's bound on Kloosterman sums. 
\end{proof}

The above lemma turns out to be sufficient for the mollified first moments, $I_1(\alpha)$ and $I_2(\alpha)$. For the mollified second moments, we require greater cancellations on averages of Kloosterman sums.

\begin{lemma}
Let $N_1N_2\ll q(\log q)^2$, and $m_1m_2\ll q^{1-\delta}$ for some $\delta>0$. Then we have
\begin{displaymath}
\sum_{\substack{n_1\sim N_1\\n_2\sim N_2}}\sum_{c\geq1}\frac{S(m_1n_1,m_2n_2;cq)}{cq}J_{1}\bigg(\frac{4\pi\sqrt{m_1m_2n_1n_2}}{cq}\bigg)\ll_{\varepsilon,\delta} q^{-1+\varepsilon}\sqrt{m_1m_2N_1N_2}.
\end{displaymath}
\end{lemma}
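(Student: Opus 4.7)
The plan is to extract an additional saving of $q^{1/2}$ over the pointwise bound supplied by Lemma 3.2. Indeed, summing that estimate trivially over $n_1 \sim N_1$ and $n_2 \sim N_2$ produces only a bound of order $\sqrt{m_1 m_2 N_1 N_2}\cdot N_1 N_2/q^{3/2}$, which in the critical range $N_1 N_2 \asymp q$ exceeds the target by a factor of roughly $\sqrt{q}$. The proof must therefore exploit genuine cancellation in the $(n_1,n_2)$-average, not merely pointwise control of each Kloosterman $c$-sum.

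The approach I would take is to insert smooth bump weights $W_i(n_i/N_i)$ in place of the sharp cutoffs and to dyadically partition the $c$-sum, $c \asymp C$. When $C \gg \sqrt{m_1 m_2 N_1 N_2}\,q^{-1+\varepsilon}$ the Bessel argument is very small, and $J_1(x)\ll x$ combined with Weil's bound already yields an acceptable estimate. For the remaining moderate ranges of $C$, I would apply the Kuznetsov trace formula on $\Gamma_0(q)$ to convert
\[
\sum_{c\asymp C}\frac{S(m_1 n_1,m_2 n_2;cq)}{cq}\,J_1\!\left(\frac{4\pi\sqrt{m_1 m_2 n_1 n_2}}{cq}\right)
\]
into a spectral average over the Maass cuspidal and Eisenstein spectra of level $q$, paired with products $\rho_j(m_1 n_1)\overline{\rho_j(m_2 n_2)}$ of Fourier coefficients. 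Hecke multiplicativity then decouples each $\rho_j(m_i n_i)$ into $\rho_j(m_i)\lambda_j(n_i)$-type pieces, after which Cauchy--Schwarz over the two $n_i$-sums combined with the Deshouillers--Iwaniec large sieve
\[
\sum_j \frac{1}{\|u_j\|^2}\,\Bigl|\,\sum_{n\sim N} a_n \rho_j(n)\,\Bigr|^2 \ll (N + q^{1+\varepsilon})\,\|a\|_2^2
\]
supplies precisely the $q^{1/2}$-saving needed to reach $q^{-1+\varepsilon}\sqrt{m_1 m_2 N_1 N_2}$.

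The chief obstacle is controlling the Kuznetsov transform of the Bessel weight so that the spectral sum can be truncated at bounded spectral parameter, together with an acceptable bound on the Eisenstein contribution. The hypothesis $m_1 m_2 \ll q^{1-\delta}$ enters precisely here: it ensures that the would-be main term from the continuous spectrum (whose size is essentially $\sqrt{m_1 m_2}\,q^{-1/2}$) does not overwhelm the target, and that the diagonal pieces in the Hecke factorisation of $\rho_j(m_i n_i)$ remain controllable. An alternative, more elementary route is to open each Kloosterman sum, separate the coupling of $n_1$ and $n_2$ in $J_1$ via a Mellin--Barnes integral representation, and then apply Poisson summation in each $n_i$; this avoids the spectral machinery but requires careful bookkeeping for the resulting dual exponential sums, which are ultimately estimated by Weil's bound.
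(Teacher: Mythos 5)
The paper itself gives no proof of this lemma; the ``proof'' consists of the citation ``See Lemma 3.3 of [\textbf{\ref{KMV}}] or [\textbf{\ref{V}}].'' In those references the estimate is obtained by an elementary argument: separate the $c$-sum at a threshold $C_0$, estimate the tail $c\ge C_0$ by Weil's bound together with $J_1(x)\ll x$, and in the remaining range exploit the factorisation of $S(m_1n_1,m_2n_2;cq)$ into a $c$-part and a $q$-part (twisted multiplicativity, $q$ prime), using $S(a,b;q)=S(1,ab;q)$ for $(ab,q)=1$ and cancellation in sums of Kloosterman sums modulo $q$ to extract the extra $q^{1/2}$-saving from the $(n_1,n_2)$-average. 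The hypothesis $m_1m_2\ll q^{1-\delta}$ keeps $(m_1m_2,q)=1$ and keeps the Bessel argument small.

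Your proposal starts from the correct diagnosis --- one must save about $q^{1/2}$ over the pointwise Weil bound, and this saving must come from the $(n_1,n_2)$-sum --- but two points need attention. First, the claimed threshold for the trivial range is wrong. Because $m_1m_2\ll q^{1-\delta}$ and $N_1N_2\ll q(\log q)^2$, the Bessel argument is already $O(q^{-\delta/2}\log q)$ at $c=1$, so ``Bessel argument small'' holds for every $c$; it is not the delimiter of the easy range. Running Weil's bound and the trivial $n_i$-sums, the tail $c\ge C_0$ is acceptable only for $C_0\gg(N_1N_2)^2 q^{-1+\varepsilon}$, which can be as large as $q^{1+\varepsilon}$. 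Your threshold $C_0\gg\sqrt{m_1m_2N_1N_2}\,q^{-1+\varepsilon}$ can be $<1$ under the stated hypotheses, which would (incorrectly) dispose of the whole sum by the trivial bound and contradict your own opening observation that Weil alone loses $q^{1/2}$. Second, the Kuznetsov/Deshouillers--Iwaniec route you outline for the main range is a genuinely different and substantially heavier approach than that of [\textbf{\ref{KMV}}]/[\textbf{\ref{V}}]. It is plausible at a high level, but several nontrivial steps are suppressed: $J_1$ is the weight-$2$ holomorphic kernel, so relating $\sum_c S(\cdot,\cdot;cq)J_1(\cdot)/(cq)$ to a Maass/Eisenstein average requires inverting $J_1$ against the Kuznetsov transform (Sears--Titchmarsh), which brings in both parities and a residual holomorphic piece; $\rho_j(m_in_i)$ factors as $\rho_j(m_i)\lambda_j(n_i)$ only when $(m_i,n_i)=1$, so the Hecke relations introduce a $\gcd$-divisor sum you must control; and the level-$q$ form of the spectral large sieve plus the Eisenstein contribution must be handled uniformly. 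Your alternative elementary route (Mellin--Barnes to decouple $n_1,n_2$ in $J_1$, Poisson in each $n_i$, Weil on the dual sums) is much closer in spirit to the cited proofs and is the one to develop if the goal is to reproduce the lemma rather than to give an independent spectral proof.
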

\begin{proof}
See Lemma 3.3 of [\textbf{\ref{KMV}}] or [\textbf{\ref{V}}].
\end{proof}

\begin{lemma}\label{103}
Let
\begin{equation}\label{9}
V(x)=\frac{1}{2\pi i}\int_{(2)}e^{s^2}x^{-s}\frac{ds}{s}.
\end{equation}
Then for any $B>0$ we have 
\begin{eqnarray*}
L(f.\chi,\tfrac{1}{2}+\alpha)=\sum_{n\geq1}\frac{\chi(n)\lambda_f(n)}{n^{1/2+\alpha}}V\bigg(\frac{n}{\hat{q}^{2+\varepsilon}}\bigg)+O_{\varepsilon,B}(\hat{q}^{-B}).
\end{eqnarray*}
\end{lemma}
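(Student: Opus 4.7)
The plan is the standard Mellin--Barnes argument for producing an approximate functional equation with a smooth cutoff. I would start with
\begin{equation*}
I \;=\; \frac{1}{2\pi i}\int_{(2)} L\bigl(f.\chi,\tfrac{1}{2}+\alpha+s\bigr)\, e^{s^2}\, \hat{q}^{(2+\varepsilon)s}\, \frac{ds}{s},
\end{equation*}
which is absolutely convergent thanks to the Gaussian factor $e^{s^2}$. On the line $\operatorname{Re}(s)=2$ the argument $\tfrac{1}{2}+\alpha+s$ has real part exceeding $1$, so $L(f.\chi,\tfrac{1}{2}+\alpha+s)$ is given by its absolutely convergent Dirichlet series; interchanging sum and integral and recognising the definition \eqref{9} of $V$ produces
\begin{equation*}
I \;=\; \sum_{n\geq 1}\frac{\chi(n)\lambda_f(n)}{n^{1/2+\alpha}}\,V\!\left(\frac{n}{\hat{q}^{2+\varepsilon}}\right),
\end{equation*}
which is the sum appearing on the right-hand side of the lemma.

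Next I would shift the contour to $\operatorname{Re}(s)=-B'$, where $B'$ will be fixed in terms of $B$ at the end. The integrand is holomorphic inside the strip except for a simple pole at $s=0$ coming from $1/s$, whose residue is precisely $L(f.\chi,\tfrac{1}{2}+\alpha)$. Cauchy's theorem therefore gives
\begin{equation*}
I \;=\; L\bigl(f.\chi,\tfrac{1}{2}+\alpha\bigr)\, +\, \frac{1}{2\pi i}\int_{(-B')} L\bigl(f.\chi,\tfrac{1}{2}+\alpha+s\bigr)\, e^{s^2}\, \hat{q}^{(2+\varepsilon)s}\, \frac{ds}{s},
\end{equation*}
so the lemma reduces to bounding the displaced integral by $O_{\varepsilon,B}(\hat{q}^{-B})$.

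For this bound I would apply the functional equation $L(f.\chi,w)=X(f.\chi,w)L(f.\overline{\chi},1-w)$ with $w=\tfrac{1}{2}+\alpha+s$. On the line $\operatorname{Re}(s)=-B'$, the value $L(f.\overline{\chi},1-w)$ lives at $\operatorname{Re}=\tfrac{1}{2}+B'+O(\mathscr{L}^{-1})$, comfortably to the right of $\operatorname{Re}=1$ once $B'>1$, hence bounded absolutely. Stirling's formula together with $|\hat{q}^{1-2w}|=\hat{q}^{2B'+O(1)}$ then yields $|X(f.\chi,w)|\ll_{B'}\hat{q}^{2B'}(1+|t|)^{2B'}$. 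Multiplying by $|\hat{q}^{(2+\varepsilon)s}|=\hat{q}^{-(2+\varepsilon)B'}$ produces a net saving of $\hat{q}^{-\varepsilon B'}$ in the $\hat{q}$-powers, while the Gaussian $|e^{s^2}|=e^{B'^{2}-t^{2}}$ absorbs the factor $(1+|t|)^{2B'}$ and guarantees absolute convergence of the $t$-integral. Choosing any $B'>B/\varepsilon$ (and perturbing slightly, if necessary, to avoid the poles of the Gamma factors) delivers the stated bound.

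The only mildly delicate point is this exponent bookkeeping on the shifted contour: the prefactor $\hat{q}^{(2+\varepsilon)s}$ is engineered precisely so that the exponent $-(2+\varepsilon)B'$ dominates the $+2B'$ appearing through $X(f.\chi,w)$, leaving an honest saving of $\hat{q}^{-\varepsilon B'}$. Once this has been noted, no further obstacle arises, and in fact the Gaussian weight $e^{s^{2}}$ could be replaced by any entire function of sufficient vertical decay.
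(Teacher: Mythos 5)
Your proof is correct and follows essentially the same Mellin--Barnes contour-shift argument as the paper: the paper also considers $\frac{1}{2\pi i}\int_{(2)}X^s e^{s^2}L(f.\chi,\tfrac12+\alpha+s)\,\frac{ds}{s}$, shifts to $\operatorname{Re}(s)=-N$ picking up the residue at $s=0$, bounds the shifted integral via $L(f.\chi,\sigma+it)\ll_\sigma(\hat q^2(1+|t|))^{1/2-\sigma}$ (which you rederive from the functional equation and Stirling), and then sets $X=\hat q^{2+\varepsilon}$, $N=B/\varepsilon$. Your parenthetical caution about poles of the Gamma factors on the shifted line is unnecessary --- $L(f.\chi,s)$ is entire, so the integrand's only pole is at $s=0$ --- but this does not affect the argument.
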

\begin{proof}
Consider
\begin{equation*}
A=\frac{1}{2\pi i}\int_{(2)}X^se^{s^2}L(f.\chi,\tfrac{1}{2}+\alpha+s)\frac{ds}{s}.
\end{equation*}
We move the line of integration to $\textrm{Re}(s)=-N$, crossing a simple pole at $s=0$. On the new contour, we use the decay of $e^{s^2}$ and the bound $L(f.\chi,\sigma+it)\ll_\sigma \big(\hat{q}^2(1+|t|)\big)^{1/2-\sigma}$ for $\sigma<0$. In doing so we obtain 
\begin{equation*}
A=L(f.\chi,\tfrac{1}{2}+\alpha)+O_N(X^{-N}\hat{q}^{2N}).
\end{equation*}
We now take $X=\hat{q}^{2+\varepsilon}$, and $N=B/\varepsilon$. Finally expressing the $L$-function in the integral as a Dirichlet series we obtain the lemma.
\end{proof}

\begin{lemma}\label{100}
Let $G(s)=e^{s^2}p(s)$ and $p(s)=\tfrac{(\alpha+\beta)^2-4s^2}{(\alpha+\beta)^2}$. Let
\begin{equation}\label{12}
W_{\alpha,\beta}^{\pm}(x)=\frac{1}{2\pi i}\int_{(2)}G(s)g_{\alpha,\beta}^{\pm}(s)x^{-s}\frac{ds}{s},
\end{equation}
where
\begin{displaymath}
g_{\alpha,\beta}^{+}(s)=\frac{\Gamma(1+\alpha+s)\Gamma(1+\beta+s)}{\Gamma(1+\alpha)\Gamma(1+\beta)}\quad\textrm{and}\quad g_{\alpha,\beta}^{-}(s)=\frac{\Gamma(1-\alpha+s)\Gamma(1-\beta+s)}{\Gamma(1+\alpha)\Gamma(1+\beta)}.
\end{displaymath}
Then we have 
\begin{eqnarray*}
L(f.\chi,\tfrac{1}{2}+\alpha)L(f.\overline{\chi},\tfrac{1}{2}+\beta)&=&\sum_{m,n\geq1}\frac{\chi(m)\overline{\chi}(n)\lambda_f(m)\lambda_f(n)}{m^{1/2+\alpha}n^{1/2+\beta}}W_{\alpha,\beta}^{+}\bigg(\frac{mn}{\hat{q}^2}\bigg)\\
&&\!\!\!\!\!\!\!\!\!\!\!\!\!\!\!\!\!\!+\hat{q}^{-2(\alpha+\beta)}\sum_{m,n\geq1}\frac{\overline{\chi}(m)\chi(n)\lambda_f(m)\lambda_f(n)}{m^{1/2-\alpha}n^{1/2-\beta}}W_{\alpha,\beta}^{-}\bigg( \frac{mn}{\hat{q}^2}\bigg).
\end{eqnarray*}
\end{lemma}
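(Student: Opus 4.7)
The plan is to realise the identity via a single Mellin-type contour integral whose residue at $s=0$ produces the product $L(f.\chi,\tfrac12+\alpha)L(f.\overline{\chi},\tfrac12+\beta)$, and whose two expansions (on the original contour, and on its reflection via the functional equation) reproduce the two Dirichlet-series sums on the right-hand side. Concretely, I would set
$$I=\frac{1}{2\pi i}\int_{(2)}G(s)g_{\alpha,\beta}^{+}(s)\hat{q}^{2s}L(f.\chi,\tfrac12+\alpha+s)L(f.\overline{\chi},\tfrac12+\beta+s)\frac{ds}{s}.$$
On $\mathrm{Re}(s)=2$ both Dirichlet series converge absolutely, so expanding and swapping sum with integral expresses $I$ as the first sum in the lemma, with the weight $W_{\alpha,\beta}^{+}(mn/\hat{q}^{2})$ from \eqref{12}. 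On the other hand, shifting the contour to a line $(-\epsilon)$ just to the left of $0$ crosses only the simple pole at $s=0$, with residue equal to $L(f.\chi,\tfrac12+\alpha)L(f.\overline{\chi},\tfrac12+\beta)$ since $G(0)=g_{\alpha,\beta}^{+}(0)=1$. No other residues intervene: the $L$-functions are entire, and the $\Gamma$-poles of $g_{\alpha,\beta}^{+}$ sit near $s=-1-\alpha,\,-1-\beta$, safely to the left of $(-\epsilon)$ when $\alpha,\beta\ll\mathscr{L}^{-1}$.

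On the new contour I would apply the functional equations for $\Lambda(f.\chi,\cdot)$ and $\Lambda(f.\overline{\chi},\cdot)$. Their root numbers multiply to $\varepsilon_{f.\chi}\varepsilon_{f.\overline{\chi}}=q\lambda_{f}(q)^{2}=\varepsilon_{f}^{2}=1$ by \eqref{600}, so no sign survives. The $\Gamma$-factors combine with $g_{\alpha,\beta}^{+}(s)$ to leave $\Gamma(1-\alpha-s)\Gamma(1-\beta-s)/[\Gamma(1+\alpha)\Gamma(1+\beta)]$, while the conductor produces $\hat{q}^{-2(\alpha+\beta+2s)}$ and the product becomes $L(f.\overline{\chi},\tfrac12-\alpha-s)L(f.\chi,\tfrac12-\beta-s)$. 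A change of variables $s\mapsto-s$---using $G(-s)=G(s)$, which follows from the evenness of $p(s)$---turns the integral into one on a line $(\epsilon)$ just to the right of $0$ with integrand $G(s)g_{\alpha,\beta}^{-}(s)\hat{q}^{2s-2(\alpha+\beta)}L(f.\overline{\chi},\tfrac12-\alpha+s)L(f.\chi,\tfrac12-\beta+s)/s$. A final shift back to $(2)$---permissible since the $L$-functions are entire and the $\Gamma$-poles of $g_{\alpha,\beta}^{-}$ lie to the left of $0$---followed by Dirichlet expansion, yields precisely $\hat{q}^{-2(\alpha+\beta)}$ times the second sum in the lemma. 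Equating the two expressions for $I$ gives the stated identity.

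The only genuinely delicate point is the bookkeeping of orientations and of the $ds/s$ factor under $s\mapsto-s$, which must be tracked so that the dual sum ultimately appears with a $+$ sign in front of $\hat{q}^{-2(\alpha+\beta)}$. Every other ingredient---absolute convergence on $\mathrm{Re}(s)=2$, the root-number identity $\varepsilon_{f.\chi}\varepsilon_{f.\overline{\chi}}=1$, and the location of the $\Gamma$-poles for $\alpha,\beta\ll\mathscr{L}^{-1}$---is immediate from formulas already recorded in the paper.
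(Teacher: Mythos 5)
Your proposal is correct and follows essentially the same route as the paper: construct a smoothed Perron-type integral against $G(s)/s$, extract the residue at $s=0$ to produce the target central value, and use the functional equation together with the substitution $s\mapsto -s$ (and the evenness of $p$) to convert the shifted integral into the second, dual Dirichlet series. The only cosmetic difference is that the paper packages the argument in terms of completed $\Lambda$-functions (so that the $\Gamma$- and conductor factors are absorbed automatically and a single shift from $\mathrm{Re}(s)=2$ to $\mathrm{Re}(s)=-2$ suffices, with $s\mapsto -s$ returning the contour directly to $\mathrm{Re}(s)=2$), whereas you work with the $L$-functions and the explicit weight $g_{\alpha,\beta}^{+}(s)\hat{q}^{2s}$, shifting first to $(-\epsilon)$ and then, after the reflection, back out to $(2)$ — an extra but harmless step, since as you note the relevant $\Gamma$-poles of $g_{\alpha,\beta}^{-}$ sit well to the left for $\alpha,\beta\ll\mathscr{L}^{-1}$. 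The sign bookkeeping you flag (the $ds/s$ factor under $s\mapsto -s$ contributes a minus sign which, upon moving the pole's residue to the other side of the equation, yields the $+$ in front of $\hat{q}^{-2(\alpha+\beta)}$) does in fact work out, and the root-number computation $\varepsilon_{f.\chi}\varepsilon_{f.\overline{\chi}}=1$ from \eqref{600} is the same one the paper uses implicitly.
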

\begin{remark}
\emph{A contour shift to Re$(s)=B$, together with Stirling's formula, gives
\begin{equation*}
V(x),W_{\alpha,\beta}^{\pm}(x)\ll_{B}x^{-B}
\end{equation*}
for any $B>0$.} 
\end{remark}
\begin{remark}
\emph{The purpose of the function $p(s)$ in the above lemma is to cancel the poles of the functions $\zeta\big(1\pm(\alpha+\beta)+2s\big)$ at $s=\mp(\alpha+\beta)/2$ in Sections 6--8. This substantially simplifies our later calculations.} 
\end{remark}
\begin{proof}
Consider the integral
\begin{displaymath}
A_{\alpha,\beta}=\frac{1}{2\pi i}\int_{(2)}G(s)\frac{\Lambda(f.\chi,1/2+\alpha+s)\Lambda(f.\overline{\chi},1/2+\beta+s)}{\Gamma(1+\alpha)\Gamma(1+\beta)}\frac{ds}{s}.
\end{displaymath}
We move the line of integration to $\textrm{Re}(s)=-2$ and use Cauchy's theorem. In doing so we obtain
\begin{displaymath}
A_{\alpha,\beta}=R_0+\frac{1}{2\pi i}\int_{(-2)}G(s)\frac{\Lambda(f.\chi,1/2+\alpha+s)\Lambda(f.\overline{\chi},1/2+\beta+s)}{\Gamma(1+\alpha)\Gamma(1+\beta)}\frac{ds}{s},
\end{displaymath}
where $R_0$ is the term arising from the residue of the integrand at $s=0$. Clearly,  
\begin{displaymath}
R_0=\hat{q}^{1+\alpha+\beta}L(f.\chi,\tfrac{1}{2}+\alpha)L(f.\overline{\chi},\tfrac{1}{2}+\beta).
\end{displaymath}
By the change of variable $s$ to $-s$, and the functional equation, we have
\begin{displaymath}
R_0=A_{\alpha,\beta}+\frac{1}{2\pi i}\int_{(2)}G(s)\frac{\Lambda(f.\overline{\chi},1/2-\alpha+s)\Lambda(f.\chi,1/2-\beta+s)}{\Gamma(1+\alpha)\Gamma(1+\beta)}\frac{ds}{s}.
\end{displaymath}
The lemma now follows by expressing the $\Lambda$-functions as Dirichlet series and then integrating term-by-term.
\end{proof}

\subsection{Mellin transform pairs}

Let $P(x)=\sum_{i}a_ix^i$ and $Q(x)=\sum_{j}b_jx^j$. We note the Mellin transform pairs
\begin{equation}\label{11}
P[h]=\sum_{i}\frac{a_ii!}{(\log y_1)^i}\frac{1}{2\pi i}\int_{(2)}\frac{y_{1}^{w}}{w^{i+1}}h^{-w}dw
\end{equation}
and
\begin{eqnarray}\label{8}
Q[h]=\sum_{j}\frac{b_jj!}{(\log y_2)^j}\frac{1}{2\pi i}\int_{(2)}\frac{y_{2}^{w}}{w^{j+1}}h^{-w}dw.
\end{eqnarray}

\section{Evaluating $I_1(\alpha)$}

In view of Lemma \ref{103} we have
\begin{eqnarray*}
L(f.\chi,\tfrac{1}{2}+\alpha)M_1(f,\chi)=\sum_{m,n}\frac{\mu(m)\chi(mn)}{\psi(m)n^{1/2+\alpha}\sqrt{m}}\lambda_f(m)\lambda_f(n)P[m]V\bigg(\frac{n}{\hat{q}^{2+\varepsilon}}\bigg)+O_{\varepsilon,B}(\hat{q}^{-B+\Delta_1/2+\varepsilon}).
\end{eqnarray*}
The sum $I_1(\alpha)$ can be evaluated using the Petersson formula. For the off-diagonal terms, $m\ne n$, Lemma \ref{102} implies that the total contribution is
\begin{equation*}
\ll_\varepsilon q^{-1/2+\varepsilon}\sum_{m\leq y_1}1\ll_\varepsilon q^{-1/2+\Delta_1/2+\varepsilon}.
\end{equation*} 
The main contribution to $I_1(\alpha)$, which comes from the terms $m=n$, is
\begin{equation*}
I_1'(\alpha)=\sum_{n\geq1}\frac{\mu(n)\chi^2(n)}{\psi(n)n^{1+\alpha}}P[n]V\bigg(\frac{n}{\hat{q}^{2+\varepsilon}}\bigg).
\end{equation*}
Using \eqref{11} and \eqref{9} we can write this as
\begin{eqnarray*}
I_1'(\alpha)&=&\sum_{i}\frac{a_ii!}{(\log y_1)^i}\bigg(\frac{1}{2\pi i}\bigg)^2\int_{(2)}\int_{(2)}e^{s^2}\hat{q}^{(2+\varepsilon)s}y_{1}^{w}\sum_{n\geq1} \frac{\mu(n)\chi^2(n)}{\psi(n)n^{1+\alpha+w+s}}\frac{dw}{w^{i+1}}\frac{ds}{s}\nonumber\\
&=&\sum_{i}\frac{a_ii!}{(\log y_1)^i}\bigg(\frac{1}{2\pi i}\bigg)^2\int_{(2)}\int_{(2)}e^{s^2}\hat{q}^{(2+\varepsilon)s}y_{1}^{w}\frac{A(\alpha,w,s)}{L(1+\alpha+w+s,\chi^2)}\frac{dw}{w^{i+1}}\frac{ds}{s},
\end{eqnarray*}
where
\begin{equation*}
A(\alpha,w,s)=\prod_p\bigg(1-\frac{\chi^2(p)}{p^{1+\alpha+w+s}}\bigg)^{-1}\bigg(1-\frac{\chi^2(p)}{\psi(p)p^{1+\alpha+w+s}}\bigg).
\end{equation*}
We note that $A(\alpha,w,s)$ is absolutely and uniformly convergent in some product of fixed half-planes containing the origin. We first move the $w$-contour to $\textrm{Re}(w)=\delta$, and then move the $s$-contour to $\textrm{Re}(s)=-2\delta/(2+\varepsilon)$, where $\delta>0$ is some fixed small constant such that $A(\alpha,w,s)$ converges absolutely. In doing so we only cross a simple pole at $s=0$. By bounding the integral by absolute values, the contribution along the new line is
\begin{equation*}
\ll_{\delta} \hat{q}^{-2\delta}y_{1}^{\delta}\ll_{\delta} \hat{q}^{-(2-\Delta_1)\delta}.
\end{equation*} 
So
\begin{eqnarray*}
I_1'(\alpha)=\sum_{i}\frac{a_ii!}{(\log y_1)^i}\frac{1}{2\pi i}\int_{(\delta)}y_{1}^{w}\frac{A(\alpha,w,0)}{L(1+\alpha+w,\chi^2)}\frac{dw}{w^{i+1}}+O_{\delta}(\hat{q}^{-(2-\Delta_1)\delta}).
\end{eqnarray*}
It is easy to check that $A(0,0,0)=L(2,\chi^4)$, and hence
\begin{eqnarray*}
I_1(\alpha)&=&\frac{L(2,\chi^4)}{L(1,\chi^2)}\sum_{i}\frac{a_ii!}{(\log y_1)^i}\frac{1}{2\pi i}\int_{(\delta)}y_{1}^{w}\frac{dw}{w^{i+1}}+O(\mathscr{L}^{-1})+O_{\varepsilon,\delta}(\hat{q}^{-(2-\Delta_1)\delta}+q^{-1/2+\Delta_1/2+\varepsilon})\\
&=&\frac{L(2,\chi^4)}{L(1,\chi^2)}P(1)+O(\mathscr{L}^{-1})+O_{\varepsilon,\delta}(\hat{q}^{-(2-\Delta_1)\delta}+q^{-1/2+\Delta_1/2+\varepsilon}).
\end{eqnarray*}

\section{Evaluating $I_2(\alpha)$}

In view of Lemma \ref{103} we have
\begin{eqnarray*}
L(f.\chi,\tfrac{1}{2}+\alpha)M_2(f,\chi)&=&\frac{1}{\mathscr{L}}\sum_{m_1,m_2,n}\frac{\mu(m_1)(\mu*\log)(m_2)\chi(m_1n)\overline{\chi}(m_2)}{\psi(m_1)\overline{\psi}(m_2)n^{1/2+\alpha}\sqrt{m_1m_2}}\\
&&\quad \lambda_f(m_1)\lambda_f(m_2)\lambda_f(n)Q[m_1m_2]V\bigg(\frac{n}{\hat{q}^{2+\varepsilon}}\bigg)+O_{\varepsilon,B}(\hat{q}^{-B+\Delta_2/2+\varepsilon}).
\end{eqnarray*}
Using Lemma \ref{101} and replacing $m_1,n$ by $um_1,un$, the first term is equal to
\begin{eqnarray}\label{514}
\frac{1}{\mathscr{L}}\sum_{(u,q)=1}\frac{\mu(um_1)(\mu*\log)(m_2)\chi^2(u)\chi(m_1n)\overline{\chi}(m_2)}{\psi(um_1)\overline{\psi}(m_2)n^{1/2+\alpha}u^{1+\alpha}\sqrt{m_1m_2}}\lambda_f(m_1n)\lambda_f(m_2)Q[um_1m_2]V\bigg(\frac{un}{\hat{q}^{2+\varepsilon}}\bigg).
\end{eqnarray}

The sum $I_2(\alpha)$ can now be evaluated using the Petersson formula. For the off-diagonal terms, $m_2\ne m_1n$, Lemma \ref{102} implies that the total contribution is
\begin{equation*}
\ll_\varepsilon q^{-1/2+\varepsilon}\sum_{m_1m_2\leq y_2}1\ll_\varepsilon q^{-1/2+\Delta_2/2+\varepsilon}.
\end{equation*} 
The main contribution to $I_2(\alpha)$, which comes from the terms $m_2=m_1n$, is
\begin{equation*}
I_2'(\alpha)=\frac{1}{\mathscr{L}}\sum_{\substack{m_2=m_1n\\(u,q)=1}}\frac{\mu(um_1)(\mu*\log)(m_2)\chi^2(u)}{\psi(um_1)\overline{\psi}(m_2)n^{1/2+\alpha}u^{1+\alpha}\sqrt{m_1m_2}}Q[um_1m_2]V\bigg(\frac{un}{\hat{q}^{2+\varepsilon}}\bigg).
\end{equation*}
Using \eqref{8} and \eqref{9} we can write this as
\begin{eqnarray*}
I_2'(\alpha)&=&\frac{1}{\mathscr{L}}\sum_{j}\frac{b_jj!}{(\log y_2)^j}\bigg(\frac{1}{2\pi i}\bigg)^2\int_{(2)}\int_{(2)}e^{s^2}\hat{q}^{(2+\varepsilon)s}y_{2}^{w}\\
&&\qquad\sum_{\substack{m_2=m_1n\\(u,q)=1}} \frac{\mu(um_1)(\mu*\log)(m_2)\chi^2(u)}{\psi(um_1)\overline{\psi}(m_2)n^{1/2+\alpha}u^{1+\alpha}\sqrt{m_1m_2}}\frac{1}{(um_1m_2)^w}\frac{1}{(un)^s}\frac{dw}{w^{j+1}}\frac{ds}{s}.
\end{eqnarray*}
The sum in the integrand is
\begin{equation*}
-\frac{d}{d\gamma}\sum_{\substack{m_2=m_1n\\(u,q)=1}} \frac{\mu(um_1)\mu(m_{21})\chi^2(u)}{\psi(um_1)\overline{\psi}(m_{21}m_{22})n^{1/2+\alpha}u^{1+\alpha}\sqrt{m_{1}m_{21}m_{22}}m_{22}^{\gamma}}\frac{1}{(um_{1}m_{21}m_{22})^w}\frac{1}{(un)^s}\bigg|_{\gamma=0}.
\end{equation*}
We note that here and throughout the paper, we take $\gamma,\gamma_1,\gamma_2\in\mathbb{C}$ and $\gamma,\gamma_1,\gamma_2\ll \mathscr{L}^{-1}$. Hence 
\begin{equation}\label{23}
I_2'(\alpha)=-\frac{1}{\mathscr{L}}\sum_{j}\frac{b_jj!}{(\log y_2)^j}\frac{\partial}{\partial\gamma}I_2''(\alpha,\gamma)\bigg|_{\gamma=0},
\end{equation}
where
\begin{eqnarray*}
I_2''(\alpha,\gamma)&=&\bigg(\frac{1}{2\pi i}\bigg)^2\int_{(2)}\int_{(2)}e^{s^2}\hat{q}^{(2+\varepsilon)s}y_{2}^{w}\\
&&\!\!\!\!\!\!\!\!\!\!\!\!\!\!\!\!\!\!\!\!\!\!\!\!\!\!\sum_{\substack{m_{21}m_{22}=m_1n\\(u,q)=1}} \frac{\mu(um_1)\mu(m_{21})\chi^2(u)}{\psi(um_{1})\overline{\psi}(m_{21}m_{22})n^{1/2+\alpha}u^{1+\alpha}\sqrt{m_{1}m_{21}m_{22}}m_{22}^{\gamma}}\frac{1}{(um_{1}m_{21}m_{22})^w}\frac{1}{(un)^s}\frac{dw}{w^{j+1}}\frac{ds}{s}.
\end{eqnarray*}
After some standard calculations, the above sum is
\begin{equation}\label{13}
\frac{B(\alpha,\gamma,w,s)\zeta(1+\alpha+\gamma+w+s)\zeta(1+2w)}{L_q(1+\alpha+w+s,\chi^2)\zeta(1+\alpha+w+s)\zeta(1+\gamma+2w)},
\end{equation}
where $B(\alpha,\gamma,w,s)$ is an arithmetical factor given by some Euler product that is absolutely and uniformly convergent in some product of fixed half-planes containing the origin. We first move the $w$-contour to $\textrm{Re}(w)=\delta$, and then move the $s$-contour to $\textrm{Re}(s)=-2\delta/(2+\varepsilon)$, where $\delta>0$ is some fixed small constant such that the arithmetical factor converges absolutely. In doing so we only cross a simple pole at $s=0$. By bounding the integral by absolute values, the contribution along the new line is
\begin{equation*}
\ll_{\delta} \hat{q}^{-2\delta}y_{2}^{\delta}\ll_{\delta} \hat{q}^{-(2-\Delta_2)\delta}.
\end{equation*} 
Thus
\begin{eqnarray}\label{515}
I_2''(\alpha,\gamma)=\frac{1}{2\pi i}\int_{(\delta)}y_{2}^w\frac{B(\alpha,\gamma,w,0)}{L_q(1+\alpha+w,\chi^2)}\frac{\zeta(1+\alpha+\gamma+w)\zeta(1+2w)}{\zeta(1+\alpha+w)\zeta(1+\gamma+2w)}\frac{dw}{w^{j+1}}+O_{\delta}(\hat{q}^{-(2-\Delta_2)\delta}).
\end{eqnarray}
Moving the contour to $\textrm{Re}(w)\asymp \mathscr{L}^{-1}$ and bounding the integral trivially show that $I_2''(\alpha,\gamma)\ll \mathscr{L}^{j}$. Hence
\begin{equation}\label{10}
\frac{\partial}{\partial\gamma}I_2''(\alpha,\gamma)\bigg|_{\gamma=0}=K_{21}(\alpha)+K_{22}(\alpha)+O(\mathscr{L}^{j})+O_{\delta}(\hat{q}^{-(2-\Delta_2)\delta}),
\end{equation}
where
\begin{equation*}
K_{21}(\alpha)=\frac{1}{2\pi i}\int_{(\mathscr{L}^{-1})}y_{2}^{w}\frac{B(\alpha,0,w,0)}{L_q(1+\alpha+w,\chi^2)}\frac{\zeta'(1+\alpha+w)}{\zeta(1+\alpha+w)}\frac{dw}{w^{j+1}}
\end{equation*}
and
\begin{equation*}
K_{22}(\alpha)=-\frac{1}{2\pi i}\int_{(\mathscr{L}^{-1})}y_{2}^{w}\frac{B(\alpha,0,w,0)}{L_q(1+\alpha+w,\chi^2)}\frac{\zeta'(1+2w)}{\zeta(1+2w)}\frac{dw}{w^{j+1}}.
\end{equation*}

By bounding the integrals with absolute values we have $K_{21}(\alpha),K_{22}(\alpha)\ll \mathscr{L}^{j+1}$. Denote by $K_{21}'(\alpha)$, $K_{22}'(\alpha)$ the same integrals as $K_{21}(\alpha)$ and $K_{22}(\alpha)$, respectively, but with $\frac{B(\alpha,0,w,0)}{L_q(1+\alpha+w,\chi^2)}$ being replaced by $\frac{B(\alpha,0,0,0)}{L_q(1+\alpha,\chi^2)}$. Then we have $K_{21}(\alpha)=K_{21}'(\alpha)+O(\mathscr{L}^j)$ and $K_{22}(\alpha)=K_{22}'(\alpha)+O(\mathscr{L}^j)$. The new integrals $K_{21}'(\alpha)$ and $K_{22}'(\alpha)$ have already been evaluated in [\textbf{\ref{B}}] (see Lemma 4.1). From there we obtain
\begin{equation*}
K_{21}(\alpha)=-\frac{B(\alpha,0,0,0)(\log y_2)^{j+1}}{L_q(1+\alpha,\chi^2)j!}\int_{0}^{1}y_{2}^{-\alpha(1-x)}x^{j}dx+O(\mathscr{L}^{j})
\end{equation*}
and
\begin{equation*}
K_{22}(\alpha)=\frac{B(\alpha,0,0,0)(\log y_2)^{j+1}}{2L_q(1+\alpha,\chi^2)(j+1)!}+O(\mathscr{L}^{j}).
\end{equation*}
By \eqref{10} and \eqref{23} we have
\begin{eqnarray*}
I_{2}'(\alpha)=\frac{B(0,0,0,0)}{L(1,\chi^2)}\bigg(\Delta_2\int_{0}^{1}y_{2}^{-\alpha(1-x)}Q(x)dx-\tfrac{\Delta_2}{2}Q_1(1)\bigg)+O(\mathscr{L}^{-1})+O_{\delta}(\hat{q}^{-(2-\Delta_2)\delta}).
\end{eqnarray*}

We now compute $B(0,0,0,0)$. Taking $\alpha=\gamma=0$ and $w=s$ in \eqref{13} we have
\begin{equation*}
B(0,0,s,s)=L_q(1+2s,\chi^2)\sum_{\substack{m_{21}m_{22}=m_1n\\(u,q)=1}}\frac{\mu(um_1)\mu(m_{21})\chi^2(u)}{\psi(um_{1})\overline{\psi}(m_{21}m_{22})u^{1+2s}(m_{1}m_{21}m_{22}n)^{1/2+s}}.
\end{equation*}
Consider the sum over $m_{21}$. The above sum vanishes unless $m_{21}m_{22}=1$. Hence
\begin{eqnarray*}
B(0,0,s,s)&=&L_q(1+2s,\chi^2)\sum_{(u,q)=1}\frac{\mu(u)\chi^2(u)}{\psi(u)u^{1+2s}}\\
&=&L_q(2+4s,\chi^4)\prod_{p\nmid q}\bigg(1+\frac{\chi^2(p)}{p^{1+2s}}\bigg)\bigg(1-\frac{\chi^2(p)}{p^{1+2s}(1+\chi^2(p)/p)}\bigg).
\end{eqnarray*}
Thus $B(0,0,0,0)=(1+O(q^{-1})\big)L(2,\chi^4)$. This and the evaluation of $I_1(\alpha)$ in the previous section complete the proof of Lemma 2.1.

\section{Evaluating $J_1(\alpha,\beta)$}

\subsection{Reduction to a contour integral}

In view of Lemma \ref{100}, we have
\begin{equation*}
L(f.\chi,\tfrac{1}{2}+\alpha)L(f.\overline{\chi},\tfrac{1}{2}+\beta)|M_1(f,\chi)|^2=R_{\alpha,\beta}^{+}(f,\chi)+\hat{q}^{-2(\alpha+\beta)}R_{\alpha,\beta}^{-}(f,\chi),
\end{equation*}
where
\begin{eqnarray*}
R_{\alpha,\beta}^{+}(f,\chi)&=&\sum_{m,n,m_1,n_1}\frac{\mu(m_1)\mu(n_1)\chi(mm_1)\overline{\chi}(nn_1)}{\psi(m_1)\overline{\psi}(n_1)m^{1/2+\alpha}n^{1/2+\beta}\sqrt{m_1n_1}}\\
&&\qquad\lambda_f(m)\lambda_f(n)\lambda_f(m_1)\lambda_f(n_1)P[m_1]P[n_1]W_{\alpha,\beta}^{+}\bigg(\frac{mn}{\hat{q}^2}\bigg)
\end{eqnarray*}
and
\begin{eqnarray*}
R_{\alpha,\beta}^{-}(f,\chi)&=&\sum_{m,n,m_1,n_1}\frac{\mu(m_1)\mu(n_1)\chi(nm_1)\overline{\chi}(mn_1)}{\psi(m_1)\overline{\psi}(n_1)m^{1/2-\alpha}n^{1/2-\beta}\sqrt{m_1n_1}}\\
&&\qquad\lambda_f(m)\lambda_f(n)\lambda_f(m_1)\lambda_f(n_1)P[m_1]P[n_1]W_{\alpha,\beta}^{-}\bigg(\frac{mn}{\hat{q}^2}\bigg).
\end{eqnarray*}
We now consider $\sum^{h}R_{\alpha,\beta}^{+}(f,\chi)$. The sum corresponding to $R_{\alpha,\beta}^{-}(f,\chi)$ can be treated similarly. We wish to use the Petersson formula for the sum over $f$. To do that we first need to appeal to the Hecke formula. From Lemma \ref{101}, replacing $m,n,m_1,n_1$ by $um,vn,vm_1,un_1$ we have
\begin{eqnarray*}
R_{\alpha,\beta}^{+}(f,\chi)&=&\sum_{(uv,q)=1}\frac{\mu(vm_1)\mu(un_1)\chi(mm_1)\overline{\chi}(nn_1)}{\psi(vm_1)\overline{\psi}(un_1)(um)^{1/2+\alpha}(vn)^{1/2+\beta}\sqrt{uvm_1n_1}}\\
&&\qquad\lambda_f(mn_1)\lambda_f(nm_1)P[vm_1]
P[un_1]W_{\alpha,\beta}^{+}\bigg(\frac{uvmn}{\hat{q}^2}\bigg).
\end{eqnarray*}

The sum $\sum^{h}R_{\alpha,\beta}^{+}(f,\chi)$ can now be evaluated using the Petersson formula. For the off-diagonal terms coming from the Kloosterman sums, $mn_1\ne nm_1$, integration by parts and Lemma 3.3 (see [\textbf{\ref{IS}}] or [\textbf{\ref{V}}] for details) imply that the total contribution is\footnote{This is where the condition $D\ll(\log q)^{1-\varepsilon}$ is required.}
\begin{equation}\label{240}
\ll_{\varepsilon} q^{-1+\varepsilon}\sum_{m_1,n_1\leq y_1}1\ll_{\varepsilon} q^{-1+\Delta_1+\varepsilon}.
\end{equation}
The main contribution to $\sum^{h}R_{\alpha,\beta}^{+}(f,\chi)$, which comes from the terms $mn_1=nm_1$, is
\begin{eqnarray*}
J_{1}^{+}(\alpha,\beta)=\sum_{\substack{mn_1=nm_1\\(uv,q)=1}}\frac{\mu(vm_1)\mu(un_1)\chi(mm_1)\overline{\chi}(nn_1)}{\psi(vm_1)\overline{\psi}(un_1)(um)^{1/2+\alpha}(vn)^{1/2+\beta}\sqrt{uvm_1n_1}}P[vm_1]
P[un_1]W_{\alpha,\beta}^{+}\bigg(\frac{uvmn}{\hat{q}^2}\bigg).
\end{eqnarray*}
Using \eqref{11} and \eqref{12} we obtain
\begin{eqnarray*}
&&\!\!\!\!\!\!\!\!\!\!\!\!J_{1}^{+}(\alpha,\beta)=\sum_{i,j}\frac{a_ia_ji!j!}{(\log y_1)^{i+j}}\bigg(\frac{1}{2\pi i}\bigg)^3\int_{(2)}\int_{(2)}\int_{(2)}G(s)g_{\alpha,\beta}^{+}(s)\hat{q}^{2s}y_{1}^{w_1+w_2}\\
&&\!\!\!\!\!\!\!\!\!\!\ \sum_{\substack{mn_1=nm_1\\(uv,q)=1}}\frac{\mu(vm_1)\mu(un_1)\chi(mm_1)\overline{\chi}(nn_1)}{\psi(vm_1)\overline{\psi}(un_1)(um)^{1/2+\alpha}(vn)^{1/2+\beta}\sqrt{uvm_1n_1}}\frac{1}{(vm_1)^{w_1}}\frac{1}{(un_1)^{w_2}}\frac{1}{(uvmn)^s}\frac{dw_1}{w_{1}^{i+1}}\frac{dw_2}{w_{2}^{j+1}}\frac{ds}{s}.
\end{eqnarray*}
The sum in the integrand is
\begin{eqnarray}\label{218}
\frac{C(\alpha,\beta,w_1,w_2,s)\zeta(1+\alpha+\beta+2s)\zeta(1+w_1+w_2)}{L(1+\alpha+w_1+s,\chi^2)L(1+\beta+w_2+s,\overline{\chi}^2)\zeta_q(1+\beta+w_1+s)\zeta_q(1+\alpha+w_2+s)},
\end{eqnarray}
where $C(\alpha,\beta,w_1,w_2,s)$ is an arithmetical factor given by some Euler product that is absolutely and uniformly convergent in some product of fixed half-planes containing the origin. We first move the $w_1$-contour and $w_2$-contour to $\textrm{Re}(w_1)=\textrm{Re}(w_2)=\delta$, and then move the $s$-contour to $\textrm{Re}(s)=-(1-\varepsilon)\delta$, where $\delta,\varepsilon>0$ are some fixed small constants such that the arithmetical factor converges absolutely and $\Delta_1<1-\varepsilon$. In doing so we only cross a simple pole at $s=0$. Note that the simple pole at $s=-(\alpha+\beta)/2$ of $\zeta(1+\alpha+\beta+2s)$ has been cancelled out by the factor $G(s)$. By bounding the integral by absolute values, the contribution along the new line is
\begin{equation*}
\ll_{\varepsilon,\delta} \hat{q}^{-2(1-\varepsilon)\delta}y_{1}^{2\delta}\ll_{\varepsilon,\delta} \hat{q}^{-2(1-\Delta_1-\varepsilon)\delta}.
\end{equation*}
Thus 
\begin{eqnarray*}
J_{1}^{+}(\alpha,\beta)=\zeta(1+\alpha+\beta)\sum_{i,j}\frac{a_ia_ji!j!}{(\log y_1)^{i+j}}L_1(\alpha,\beta)+O_{\varepsilon,\delta}(\hat{q}^{-2(1-\Delta_1-\varepsilon)\delta}),
\end{eqnarray*}
where
\begin{eqnarray*}
L_1(\alpha,\beta)&=&\bigg(\frac{1}{2\pi i}\bigg)^2\int_{(\delta)}\int_{(\delta)}y_{1}^{w_1+w_2}C(\alpha,\beta,w_1,w_2,0)\zeta(1+w_1+w_2)\\
&&\ \frac{1}{L(1+\alpha+w_1,\chi^2)L(1+\beta+w_2,\overline{\chi}^2)}\frac{1}{\zeta_q(1+\beta+w_1)\zeta_q(1+\alpha+w_2)}\frac{dw_1}{dw_{1}^{i+1}}\frac{dw_2}{dw_{2}^{j+1}}.
\end{eqnarray*}

Note that by bounding the integral with absolute values, we get $L_{1}(\alpha,\beta)\ll \mathscr{L}^{i+j-1}$. We denote by $L_{1}'(\alpha,\beta)$ the same integral as $L_{1}(\alpha,\beta)$ but with $L(1+\alpha+w_1,\chi^2)L(1+\beta+w_2,\overline{\chi}^2)$ and $C(\alpha,\beta,w_1,w_2,0)$ being replaced by $L(1+\alpha,\chi^2)L(1+\beta,\overline{\chi}^2)$ and $C(\alpha,\beta,0,0,0)$, respectively. Then we have $L_{1}(\alpha,\beta)=L_{1}'(\alpha,\beta)+O(\mathscr{L}^{i+j-2})$. We will later check that $C(0,0,0,0,0)=\big(1+O(q^{-1})\big)|L(2,\chi^4)|^2$ (see the end of the section), a result we will use freely from now on. The new integral $L_{1}'(\alpha,\beta)$ has already been evaluated in [\textbf{\ref{Y}}] (see Lemma 7). From there we obtain, up to an error term of size $O(\mathscr{L}^{i+j-2})$,
\begin{eqnarray*}
L_{1}(\alpha,\beta)=\frac{|L(2,\chi^4)|^2}{L(1+\alpha,\chi^2)L(1+\beta,\overline{\chi}^2)}\frac{(\log y_1)^{i+j-1}}{i!j!}\frac{d^2}{dadb}\int_{0}^{1}y_{1}^{\alpha b+\beta a}(x+a)^i(x+b)^jdx\bigg|_{a=b=0}.
\end{eqnarray*}
Hence
\begin{eqnarray}\label{400}
J_{1}^{+}(\alpha,\beta)=\bigg|\frac{L(2,\chi^4)}{L(1,\chi^2)}\bigg|^2\frac{1}{\Delta_1\mathscr{L}(\alpha+\beta)}\frac{d^2}{dadb}\int_{0}^{1}y_{1}^{\alpha b+\beta a}P(x+a)P(x+b)dx\bigg|_{a=b=0}+O(\mathscr{L}^{-1}).
\end{eqnarray}

\subsection{Deduction of Lemma \ref{305}}

Next we combine $J_{1}^{+}(\alpha,\beta)$ and $J_{1}^{-}(\alpha,\beta)$. We note that essentially $J_{1}^{-}(\alpha,\beta)=\hat{q}^{-2(\alpha+\beta)}J_{1}^{+}(-\beta,-\alpha)$. Writing
\begin{equation*}
U_1(\alpha,\beta)=\frac{y_{1}^{\alpha b+\beta a}-\hat{q}^{-2(\alpha+\beta)}y_{1}^{-\beta b-\alpha a}}{\alpha+\beta}.
\end{equation*}
Using the integral formula
\begin{equation}\label{224}
\frac{1-z^{-\alpha-\beta}}{\alpha+\beta}=(\log z)\int_{0}^{1}z^{-(\alpha+\beta)t}dt,
\end{equation}
we have
\begin{equation*}
U_1(\alpha,\beta)=\mathscr{L}y_{1}^{\alpha b+\beta a}\big(2+\Delta_1(a+b)\big)\int_{0}^{1}(\hat{q}^2y_{1}^{a+b})^{-(\alpha+\beta)t}dt.
\end{equation*}
In view of \eqref{400} and simplify, we obtain \eqref{300}.

We are left to verify that $C(0,0,0,0,0)=\big(1+O(q^{-1})\big)|L(2,\chi^4)|^2$. From \eqref{218} we get
\begin{eqnarray*}
C(0,0,s,s,s)&=&\big(1+O(q^{-1})\big)L(1+2s,\chi^2)L(1+2s,\overline{\chi}^2)\\
&&\qquad\sum_{\substack{mn_1=nm_1\\(uv,q)=1}}\frac{\mu(vm_1)\mu(un_1)\chi(mm_1)\overline{\chi}(nn_1)}{\psi(vm_1)\overline{\psi}(un_1)(uv)^{1+2s}(mnm_1n_1)^{1/2+s}}.
\end{eqnarray*}
The above sum is
\begin{eqnarray*}
&&\!\!\!\!\!\!\!\!\!\!\!\big(1+O(q^{-1})\big)\prod_{p}\bigg(1-\frac{1}{\psi(p)p^{1+2s}}\bigg)\bigg(1-\frac{1}{\overline{\psi}(p)p^{1+2s}}\bigg)\\
&&\sum_{mn_1=nm_1}\frac{\mu(m_1)\mu(n_1)\chi(mm_1)\overline{\chi}(nn_1)}{\psi(m_1)\overline{\psi}(n_1)(mnm_1n_1)^{1/2+s}}\prod_{p|m_1}\bigg(1-\frac{1}{\psi(p)p^{1+2s}}\bigg)^{-1}\prod_{p|n_1}\bigg(1-\frac{1}{\overline{\psi}(p)p^{1+2s}}\bigg)^{-1}\\
&&\!\!\!\!\!\!\!\!\!\!\!=\big(1+O(q^{-1})\big)\zeta(1+2s)\prod_{p}\bigg(1-\frac{1}{\psi(p)p^{1+2s}}\bigg)\bigg(1-\frac{1}{\overline{\psi}(p)p^{1+2s}}\bigg)\\
&&\qquad\bigg\{1-\bigg(1-\frac{1}{\psi(p)p^{1+2s}}\bigg)^{-1}\frac{\chi^2(p)}{\psi(p)p^{1+2s}}-\bigg(1-\frac{1}{\overline{\psi}(p)p^{1+2s}}\bigg)^{-1}\frac{\overline{\chi}^2(p)}{\overline{\psi}(p)p^{1+2s}}\\
&&\qquad\qquad\qquad\qquad\qquad\qquad+\bigg(1-\frac{1}{\psi(p)p^{1+2s}}\bigg)^{-1}\bigg(1-\frac{1}{\overline{\psi}(p)p^{1+2s}}\bigg)^{-1}\frac{1}{|\psi(p)|^2p^{1+2s}}\bigg\}\\
&&\!\!\!\!\!\!\!\!\!\!\!=\big(1+O(q^{-1})\big)\prod_{p}\bigg(1+\frac{\chi^2(p)}{p}\bigg)^{-1}\bigg(1+\frac{\overline{\chi}^2(p)}{p}\bigg)^{-1}\\
&&\qquad\bigg\{1+\frac{\chi^2(p)+\overline{\chi}^2(p)}{p}\bigg(1-\frac{1}{p^{2s}}\bigg)+\frac{1}{p^2}\bigg(1-\frac{1}{p^{2s}}\bigg)^2\bigg(1-\frac{1}{p^{1+2s}}\bigg)^{-1}\bigg\}.
\end{eqnarray*}
Hence $C(0,0,0,0,0)=\big(1+O(q^{-1})\big)|L(2,\chi^4)|^2$.

\section{Evaluating $J_3(\alpha,\beta)$}

\subsection{Reduction to a contour integral}

In view of Lemma \ref{100}, we have
\begin{equation*}
L(f.\chi,\tfrac{1}{2}+\alpha)L(f.\overline{\chi},\tfrac{1}{2}+\beta)M_1(f,\chi)\overline{M_2(f,\chi)}=S_{\alpha,\beta}^{+}(f,\chi)+\hat{q}^{-2(\alpha+\beta)}S_{\alpha,\beta}^{-}(f,\chi),
\end{equation*}
where
\begin{eqnarray*}
S_{\alpha,\beta}^{+}(f,\chi)&=&\frac{1}{\mathscr{L}}\sum_{m,n,m_1,m_2,n_1}\frac{\mu(m_1)(\mu*\log)(m_2)\mu(n_1)\chi(mm_2n_1)\overline{\chi}(nm_1)}{\overline{\psi}(m_1)\psi(m_2)\psi(n_1)m^{1/2+\alpha}n^{1/2+\beta}\sqrt{m_1m_2n_1}}\\
&&\qquad\lambda_f(m)\lambda_f(n)\lambda_f(m_1)\lambda_f(m_2)\lambda_f(n_1)P[n_1]Q[m_1m_2]W_{\alpha,\beta}^{+}\bigg(\frac{mn}{\hat{q}^2}\bigg)
\end{eqnarray*}
and
\begin{eqnarray*}
S_{\alpha,\beta}^{-}(f,\chi)&=&\frac{1}{\mathscr{L}}\sum_{m,n,m_1,m_2,n_1}\frac{\mu(m_1)(\mu*\log)(m_2)\mu(n_1)\chi(nm_2n_1)\overline{\chi}(mm_1)}{\overline{\psi}(m_1)\psi(m_2)\psi(n_1)m^{1/2-\alpha}n^{1/2-\beta}\sqrt{m_1m_2n_1}}\\
&&\qquad\lambda_f(m)\lambda_f(n)\lambda_f(m_1)\lambda_f(m_2)\lambda_f(n_1)P[n_1]Q[m_1m_2]W_{\alpha,\beta}^{-}\bigg(\frac{mn}{\hat{q}^2}\bigg).
\end{eqnarray*}
We now consider $\sum^{h}S_{\alpha,\beta}^{+}(f,\chi)$. The sum corresponding to $S_{\alpha,\beta}^{-}(f,\chi)$ can be treated similarly. We wish to use the Petersson formula for the sum over $f$. To do that we first need to appeal to the Hecke formula. From Lemma \ref{101}, replacing $m,n,m_1,m_2$ by $um,vn,um_1,vm_2$ we have
\begin{eqnarray*}
S_{\alpha,\beta}^{+}(f,\chi)&=&\frac{1}{\mathscr{L}}\sum_{(uv,q)=1}\frac{\mu(um_1)(\mu*\log)(vm_2)\mu(n_1)\chi(mm_2n_1)\overline{\chi}(nm_1)}{\overline{\psi}(um_1)\psi(vm_2)\psi(n_1)(um)^{1/2+\alpha}(vn)^{1/2+\beta}\sqrt{uvm_1m_2n_1}}\\
&&\qquad\lambda_f(mm_1)\lambda_f(nm_2)\lambda_f(n_1)P[n_1]Q[uvm_1m_2]W_{\alpha,\beta}^{+}\bigg(\frac{uvmn}{\hat{q}^2}\bigg).
\end{eqnarray*}
We next replace $m,m_1,n_1$ by $dm,d_1m_1,dd_1n_1$ and use Lemma \ref{101} once more to obtain
\begin{eqnarray*}
S_{\alpha,\beta}^{+}(f,\chi)&=&\frac{1}{\mathscr{L}}\sum_{(uvdd_1,q)=1}\frac{\mu(ud_1m_1)(\mu*\log)(vm_2)\mu(dd_1n_1)\chi(d^2mm_2n_1)\overline{\chi}(nm_1)}{\overline{\psi}(ud_1m_1)\psi(vm_2)\psi(dd_1n_1)(udm)^{1/2+\alpha}(vn)^{1/2+\beta}\sqrt{uvdd_{1}^2m_1m_2n_1}}\\
&&\qquad\lambda_f(mm_1n_1)\lambda_f(nm_2)P[dd_1n_1]Q[uvd_1m_1m_2]W_{\alpha,\beta}^{+}\bigg(\frac{uvdmn}{\hat{q}^2}\bigg).
\end{eqnarray*}

The sum $\sum^{h}S_{\alpha,\beta}^{+}(f,\chi)$ can now be evaluated using the Petersson formula. For the off-diagonal terms coming from the Kloosterman sums, $mm_1n_1\ne nm_2$, integration by parts and Lemma 3.3 imply that the total contribution is
\begin{equation}\label{241}
\ll_\varepsilon q^{-1+\varepsilon}\sum_{\substack{n_1\leq y_1\\m_1m_2\leq y_2}}1\ll_\varepsilon q^{-1+(\Delta_1+\Delta_2)/2+\varepsilon}.
\end{equation}
The main contribution to $\sum^{h}S_{\alpha,\beta}^{+}(f,\chi)$, which comes from the terms $mm_1n_1=nm_2$, is
\begin{eqnarray*}
J_{3}^{+}(\alpha,\beta)&=&\frac{1}{\mathscr{L}}\sum_{\substack{mm_1n_1=nm_2\\(uvdd_1,q)=1}}\frac{\mu(ud_1m_1)(\mu*\log)(vm_2)\mu(dd_1n_1)\chi(d^2mm_2n_1)\overline{\chi}(nm_1)}{\overline{\psi}(ud_1m_1)\psi(vm_2)\psi(dd_1n_1)(udm)^{1/2+\alpha}(vn)^{1/2+\beta}\sqrt{uvdd_{1}^2m_1m_2n_1}}\\
&&\qquad P[dd_1n_1]Q[uvd_1m_1m_2]W_{\alpha,\beta}^{+}\bigg(\frac{uvdmn}{\hat{q}^2}\bigg).
\end{eqnarray*}
Using \eqref{11}, \eqref{8} and \eqref{12} we get
\begin{eqnarray*}
J_{3}^{+}(\alpha,\beta)&=&\frac{1}{\mathscr{L}}\sum_{i,j}\frac{a_ib_ji!j!}{(\log y_1)^{i}(\log y_2)^j}\bigg(\frac{1}{2\pi i}\bigg)^3\int_{(2)}\int_{(2)}\int_{(2)}G(s)g_{\alpha,\beta}^{+}(s)\hat{q}^{2s}y_{1}^{w_1}y_{2}^{w_2}\\
&&\sum_{\substack{mm_1n_1=nm_2\\(uvdd_1,q)=1}}\frac{\mu(ud_1m_1)(\mu*\log)(vm_2)\mu(dd_1n_1)\chi(d^2mm_2n_1)\overline{\chi}(nm_1)}{\overline{\psi}(ud_1m_1)\psi(vm_2)\psi(dd_1n_1)(udm)^{1/2+\alpha}(vn)^{1/2+\beta}\sqrt{uvdd_{1}^2m_1m_2n_1}}\\
&&\quad\quad\quad\quad\quad\frac{1}{(dd_1n_1)^{w_1}}\frac{1}{(uvd_1m_1m_2)^{w_2}}\frac{1}{(uvdmn)^s}\frac{dw_1}{w_{1}^{i+1}}\frac{dw_2}{w_{2}^{j+1}}\frac{ds}{s}.
\end{eqnarray*}
The sum in the integrand is
\begin{eqnarray*}
&&\!\!\!\!\!\!\!-\frac{d}{d\gamma}\sum_{\substack{mm_1n_1=nm_{21}m_{22}\\(uv_1v_2dd_1,q)=1}}\frac{\mu(ud_1m_1)\mu(v_1m_{21})\mu(dd_1n_1)}{\overline{\psi}(ud_1m_1)\psi(v_1v_2m_{21}m_{22})\psi(dd_1n_1)(udm)^{1/2+\alpha}(v_1v_2n)^{1/2+\beta}}\\
&&\!\!\!\!\!\!\!\qquad\frac{\chi(d^2mm_{21}m_{22}n_1)\overline{\chi}(nm_1)}{\sqrt{uv_1v_2dd_{1}^2m_1m_{21}m_{22}n_1}}\frac{1}{(v_2m_{22})^\gamma}\frac{1}{(dd_1n_1)^{w_1}}\frac{1}{(uv_1v_2d_1m_1m_{21}m_{22})^{w_2}}\frac{1}{(uv_1v_2dmn)^s}\bigg|_{\gamma=0}.
\end{eqnarray*}
Standard calculations show that the above sum is
\begin{eqnarray*}
&&\!\!\!\!\!\!\!\!\!\!\!\!\frac{D(\alpha,\beta,\gamma,w_1,w_2,s)\zeta(1+\alpha+\beta+2s)\zeta(1+2w_2)}{L_q(1+\alpha+w_1+s,\chi^2)\zeta(1+\beta+w_1+s)\zeta(1+\gamma+2w_2)}\frac{\zeta_q(1+\beta+\gamma+w_2+s)\zeta_q(1+w_1+w_2)}{\zeta_q(1+\alpha+w_2+s)\zeta_q(1+\beta+w_2+s)}\nonumber\\
&&\qquad\qquad \times\frac{L(1+\alpha+\gamma+w_2+s,\chi^2)L(1+w_1+w_2,\chi^2)}{L(1+\alpha+w_2+s,\chi^2)L(1+\beta+w_2+s,\overline{\chi}^2)L(1+\gamma+w_1+w_2,\chi^2)},
\end{eqnarray*}
where $D(\alpha,\beta,\gamma,w_1,w_2,s)$ is an arithmetical factor given by some Euler product that is absolutely and uniformly convergent in some product of fixed half-planes containing the origin. We first move the $w_1$-contour and $w_2$-contour to $\textrm{Re}(w_1)=\textrm{Re}(w_2)=\delta$, and then move the $s$-contour to $\textrm{Re}(s)=-(1-\varepsilon)\delta$, where $\delta,\varepsilon>0$ are some fixed small constants such that the arithmetical factor converges absolutely and $\Delta_1<1-\varepsilon$. In doing so we only cross a simple pole at $s=0$. Note that the simple pole at $s=-(\alpha+\beta)/2$ of $\zeta(1+\alpha+\beta+2s)$ has been cancelled out by the factor $G(s)$. By bounding the integral by absolute values, the contribution along the new line is
\begin{equation*}
\ll_{\varepsilon,\delta} \hat{q}^{-2(1-\varepsilon)\delta}(y_{1}y_{2})^{\delta}\ll_{\varepsilon,\delta} \hat{q}^{-(2-\Delta_1-\Delta_2-2\varepsilon)\delta}.
\end{equation*}
Thus 
\begin{equation}\label{17}
J_{3}^{+}(\alpha,\beta)=-\frac{\zeta(1+\alpha+\beta)}{\mathscr{L}}\sum_{i,j}\frac{a_ib_ji!j!}{(\log y_1)^{i}(\log y_2)^j}\frac{\partial}{\partial\gamma}L_3(\alpha,\beta,\gamma)\bigg|_{\gamma=0}+O_{\varepsilon,\delta} (\hat{q}^{-(2-\Delta_1-\Delta_2-2\varepsilon)\delta}),
\end{equation}
where
\begin{eqnarray*}
&&\!\!\!\!\!L_3(\alpha,\beta,\gamma)=\bigg(\frac{1}{2\pi i}\bigg)^2\int_{(\delta)}\int_{(\delta)}y_{1}^{w_1}y_{2}^{w_2}D(\alpha,\beta,\gamma,w_1,w_2,0)\frac{\zeta_q(1+w_1+w_2)L(1+w_1+w_2,\chi^2)}{L(1+\gamma+w_1+w_2,\chi^2)}\\
&&\!\!\!\qquad\qquad\frac{1}{L_q(1+\alpha+w_1,\chi^2)\zeta(1+\beta+w_1)}\frac{L(1+\alpha+\gamma+w_2,\chi^2)}{L(1+\alpha+w_2,\chi^2)L(1+\beta+w_2,\overline{\chi}^2)}\\
&&\!\!\!\qquad\qquad\qquad\qquad\frac{\zeta_q(1+\beta+\gamma+w_2)\zeta(1+2w_2)}{\zeta_q(1+\alpha+w_2)\zeta_q(1+\beta+w_2)\zeta(1+\gamma+2w_2)}\frac{dw_1}{dw_{1}^{i+1}}\frac{dw_2}{dw_{2}^{j+1}}.
\end{eqnarray*}

We now take the derivative with respect to $\gamma$ and set $\gamma=0$. We first note that by moving the contours to $\textrm{Re}(w_1)=\textrm{Re}(w_2)\asymp \mathscr{L}^{-1}$ and bounding the integral with absolute values, we get $L_3(\alpha,\beta,\gamma)\ll \mathscr{L}^{i+j-1}$. Hence
\begin{equation}\label{105}
\frac{\partial}{\partial \gamma}L_3(\alpha,\beta,\gamma)\bigg|_{\gamma=0}=L_{31}(\alpha,\beta)+L_{32}(\alpha,\beta)+O(\mathscr{L}^{i+j-1}),
\end{equation}
where
\begin{eqnarray*}
&&\!\!\!\!\!\!\!\!\!\!\!L_{31}(\alpha,\beta)=\bigg(\frac{1}{2\pi i}\bigg)^2\int_{(\mathscr{L}^{-1})}\int_{(\mathscr{L}^{-1})}y_{1}^{w_1}y_{2}^{w_2}D(\alpha,\beta,0,w_1,w_2,0)\zeta_q(1+w_1+w_2)\\
&&\ \frac{1}{L_q(1+\alpha+w_1,\chi^2)\zeta(1+\beta+w_1)}\frac{\zeta_q'(1+\beta+w_2)}{L(1+\beta+w_2,\overline{\chi}^2)\zeta_q(1+\alpha+w_2)\zeta_q(1+\beta+w_2)}\frac{dw_1}{dw_{1}^{i+1}}\frac{dw_2}{dw_{2}^{j+1}}
\end{eqnarray*}
and
\begin{eqnarray*}
&&\!\!\!\!\!\!\!\!\!\!\!L_{32}(\alpha,\beta)=-\bigg(\frac{1}{2\pi i}\bigg)^2\int_{(\mathscr{L}^{-1})}\int_{(\mathscr{L}^{-1})}y_{1}^{w_1}y_{2}^{w_2}D(\alpha,\beta,0,w_1,w_2,0)\zeta_q(1+w_1+w_2)\\
&&\ \frac{1}{L_q(1+\alpha+w_1,\chi^2)\zeta(1+\beta+w_1)}\frac{\zeta'(1+2w_2)}{L(1+\beta+w_2,\overline{\chi}^2)\zeta_q(1+\alpha+w_2)\zeta(1+2w_2)}\frac{dw_1}{dw_{1}^{i+1}}\frac{dw_2}{dw_{2}^{j+1}}.
\end{eqnarray*}

Note that by bounding the integrals with absolute values, we get $L_{31}(\alpha,\beta),L_{32}(\alpha,\beta)\ll \mathscr{L}^{i+j}$. We denote by $L_{31}'(\alpha,\beta)$, $L_{32}'(\alpha,\beta)$ the same integrals as $L_{31}(\alpha,\beta)$ and $L_{32}(\alpha,\beta)$, respectively, but with $L_q(1+\alpha+w_1,\chi^2)L(1+\beta+w_2,\overline{\chi}^2)$ and $D(\alpha,\beta,0,w_1,w_2,0)$ being replaced by $L_q(1+\alpha,\chi^2)L(1+\beta,\overline{\chi}^2)$ and $D(\alpha,\beta,0,0,0,0)$, respectively. Then we have $L_{31}(\alpha,\beta)=L_{31}'(\alpha,\beta)+O(\mathscr{L}^{i+j-1})$, and $L_{32}(\alpha,\beta)=L_{32}'(\alpha,\beta)+O(\mathscr{L}^{i+j-1})$. As in the previous sections, it is standard to check that $D(0,0,0,0,0,0)=\big(1+O(q^{-1})\big)|L(2,\chi^4)|^2$, a result we will use freely from now on. The new integrals $L_{31}'(\alpha,\beta)$ and $L_{32}'(\alpha,\beta)$ have already been evaluated in [\textbf{\ref{B}}] (see Lemma 5.1). From there we obtain
\begin{eqnarray*}
&&\!\!\!\!\!\!\!\!\!\!\!L_{31}(\alpha,\beta)=-\frac{|L(2,\chi^4)|^2(\log y_1)^{i-1}(\log y_2)^{j+1}}{L_q(1+\alpha,\chi^2)L(1+\beta,\overline{\chi}^2)}\int_{0}^{1}\int_{0}^{x}y_{2}^{-\beta u}\\
&&\ \bigg(\frac{\beta(\log y_1) (1-\tfrac{\Delta_2(1-x)}{\Delta_1})^i}{i!}+\frac{(1-\tfrac{\Delta_2(1-x)}{\Delta_1})^{i-1}}{(i-1)!}\bigg)\bigg(\frac{\alpha(\log y_2)(x-u)^j}{j!}+\frac{(x-u)^{j-1}}{(j-1)!}\bigg)dudx\\
&&\qquad\qquad+O(\mathscr{L}^{i+j-1})+O(\mathscr{L}^{i-1+\varepsilon})\nonumber
\end{eqnarray*}
and
\begin{eqnarray*}
L_{32}(\alpha,\beta)&=&\frac{|L(2,\chi^4)|^2(\log y_1)^{i-1}(\log y_2)^{j+1}}{2L_q(1+\alpha,\chi^2)L(1+\beta,\overline{\chi}^2)}\int_{0}^{1}\bigg(\frac{\beta(\log y_1) (1-\tfrac{\Delta_2(1-x)}{\Delta_1})^i}{i!}+\frac{(1-\tfrac{\Delta_2(1-x)}{\Delta_1})^{i-1}}{(i-1)!}\bigg)\\
&&\qquad\qquad \bigg(\frac{\alpha(\log y_2)x^{j+1}}{(j+1)!}+\frac{x^j}{j!}\bigg)dx+O(\mathscr{L}^{i+j-1})+O(\mathscr{L}^{i-1+\varepsilon}).\nonumber
\end{eqnarray*}
We collect these evaluations, \eqref{105}, \eqref{17} and write $J_{3}^{+}(\alpha,\beta)$ in a compact form as
\begin{eqnarray}\label{401}
&&\!\!\!\!\!\!J_{3}^{+}(\alpha,\beta)=\bigg|\frac{L(2,\chi^4)}{L(1,\chi^2)}\bigg|^2\frac{1}{\mathscr{L}(\alpha+\beta)}\frac{d^2}{dadb}\bigg\{\tfrac{\Delta_2}{\Delta_1}\int_{0}^{1}\int_{0}^{x}y_{1}^{\beta a}y_{2}^{\alpha b-\beta u}P\big(1-\tfrac{\Delta_2(1-x)}{\Delta_1}+a\big)\\
&&\quad Q(x-u+b)dudx-\tfrac{\Delta_2}{2\Delta_1}\int_{0}^{1}y_{1}^{\beta a}y_{2}^{\alpha b}P\big(1-\tfrac{\Delta_2(1-x)}{\Delta_1}+a\big)Q_1(x+b)dx\bigg\}\bigg|_{a=b=0}+O(\mathscr{L}^{-1}).\nonumber
\end{eqnarray}

\subsection{Deduction of Lemma \ref{306}}

Next we combine $J_{3}^{+}(\alpha,\beta)$ and $J_{3}^{-}(\alpha,\beta)$. We note that essentially $J_{3}^{-}(\alpha,\beta)=\hat{q}^{-2(\alpha+\beta)}J_{3}^{+}(-\beta,-\alpha)$. Writing
\begin{equation*}
U_3(\alpha,\beta;u)=\frac{y_{1}^{\beta a}y_{2}^{\alpha b-\beta u}-\hat{q}^{-2(\alpha+\beta)}y_{1}^{-\alpha a}y_{2}^{-\beta b+\alpha u}}{\alpha+\beta}.
\end{equation*}
Using \eqref{224} we have
\begin{equation*}
U_3(\alpha,\beta;u)=\mathscr{L}y_{1}^{\beta a}y_{2}^{\alpha b-\beta u}\big(2+\Delta_1a+\Delta_2(b-u)\big)\int_{0}^{1}(\hat{q}^2y_{1}^{a}y_{2}^{b-u})^{-(\alpha+\beta)t}dt.
\end{equation*}
In view of \eqref{401} and simplify, we obtain \eqref{301}.

\section{Evaluating $J_2(\alpha,\beta)$}

\subsection{Reduction to a contour integral}

In view of Lemma \ref{100}, we have
\begin{equation*}
L(f.\chi,\tfrac{1}{2}+\alpha)L(f.\overline{\chi},\tfrac{1}{2}+\beta)|M_2(f,\chi)|^2=T_{\alpha,\beta}^{+}(f,\chi)+\hat{q}^{-2(\alpha+\beta)}T_{\alpha,\beta}^{-}(f,\chi),
\end{equation*}
where
\begin{eqnarray*}
T_{\alpha,\beta}^{+}(f,\chi)&=&\frac{1}{\mathscr{L}^2}\sum_{m,n,m_1,m_2,n_1,n_2}\frac{\mu(m_1)(\mu*\log)(m_2)\mu(n_1)(\mu*\log)(n_2)\chi(mm_1n_2)\overline{\chi}(nm_2n_1)}{\psi(m_1)\overline{\psi}(m_2)\overline{\psi}(n_1)\psi(n_2)m^{1/2+\alpha}n^{1/2+\beta}\sqrt{m_1m_2n_1n_2}}\\
&&\qquad\lambda_f(m)\lambda_f(n)\lambda_f(m_1)\lambda_f(m_2)\lambda_f(n_1)\lambda_f(n_2)Q[m_1m_2]Q[n_1n_2]W_{\alpha,\beta}^{+}\bigg(\frac{mn}{\hat{q}^2}\bigg)
\end{eqnarray*}
and
\begin{eqnarray*}
T_{\alpha,\beta}^{-}(f,\chi)&=&\frac{1}{\mathscr{L}^2}\sum_{m,n,m_1,m_2,n_1,n_2}\frac{\mu(m_1)(\mu*\log)(m_2)\mu(n_1)(\mu*\log)(n_2)\chi(nm_1n_2)\overline{\chi}(mm_2n_1)}{\psi(m_1)\overline{\psi}(m_2)\overline{\psi}(n_1)\psi(n_2)m^{1/2-\alpha}n^{1/2-\beta}\sqrt{m_1m_2n_1n_2}}\\
&&\qquad\lambda_f(m)\lambda_f(n)\lambda_f(m_1)\lambda_f(m_2)\lambda_f(n_1)\lambda_f(n_2)Q[m_1m_2]Q[n_1n_2]W_{\alpha,\beta}^{-}\bigg(\frac{mn}{\hat{q}^2}\bigg).
\end{eqnarray*}
We now consider $\sum^{h}T_{\alpha,\beta}^{+}(f,\chi)$. The sum corresponding to $T_{\alpha,\beta}^{-}(f,\chi)$ can be treated similarly. We wish to use the Petersson formula for the sum over $f$. To do that we first need to appeal to the Hecke formula. From Lemma \ref{101} we write
\begin{eqnarray*}
&&\sum_{m_1,m_2}\frac{\mu(m_1)(\mu*\log)(m_2)\chi(m_1)\overline{\chi}(m_2)\lambda_f(m_1)\lambda_f(m_2)Q[m_1m_2]}{\psi(m_1)\overline{\psi}(m_2)\sqrt{m_1m_2}}\\
&&\qquad\qquad=\sum_{\substack{m_1,m_2\\(u,q)=1}}\frac{\mu(um_1)(\mu*\log)(um_2)\chi(m_1)\overline{\chi}(m_2)\lambda_f(m_1m_2)Q[u^2m_1m_2]}{\psi(um_1)\overline{\psi}(um_2)u\sqrt{m_1m_2}}
\end{eqnarray*}
and
\begin{eqnarray*}
&&\sum_{n_1,n_2}\frac{\mu(n_1)(\mu*\log)(n_2)\chi(n_2)\overline{\chi}(n_1)\lambda_f(n_1)\lambda_f(n_2)Q[n_1n_2]}{\overline{\psi}(n_1)\psi(n_2)\sqrt{n_1n_2}}\\
&&\qquad\qquad=\sum_{\substack{n_1,n_2\\(v,q)=1}}\frac{\mu(vn_1)(\mu*\log)(vn_2)\chi(n_2)\overline{\chi}(n_1)\lambda_f(n_1n_2)Q[v^2n_1n_2]}{\overline{\psi}(vn_1)\psi(vn_2)v\sqrt{n_1n_2}}.
\end{eqnarray*}
Next we consider the factors $\lambda_f(m)\lambda_f(m_1m_2)$ and $\lambda_f(n)\lambda_f(n_1n_2)$. Again using Lemma \ref{101} and the substitutions $m\rightarrow d_1d_2m$, $m_1\rightarrow d_1m_1$, $m_2\rightarrow d_2m_2$, $n\rightarrow d_3d_4n$, $n_1\rightarrow d_3n_1$ and $n_2\rightarrow d_4n_2$, we obtain
\begin{eqnarray*}
&&\!\!\!\!\!\!\!\!\!\!\!\!T_{\alpha,\beta}^{+}(f,\chi)=\frac{1}{\mathscr{L}^2}\sum_{(uvd_1d_2d_3d_4,q)=1}\frac{\mu(ud_1m_1)(\mu*\log)(ud_2m_2)\mu(vd_3n_1)(\mu*\log)(vd_4n_2)}{\psi(ud_1m_1)\overline{\psi}(ud_2m_2)\overline{\psi}(vd_3n_1)\psi(vd_4n_2)(d_1d_2m)^{1/2+\alpha}(d_3d_4n)^{1/2+\beta}}\\
&&\qquad\qquad\qquad\frac{\chi(d_{1}^{2}mm_1n_2)\overline{\chi}(d_{3}^{2}nm_2n_1)}{uv\sqrt{d_1d_2d_3d_4m_1m_2n_1n_2}}\lambda_f(mm_1m_2)\lambda_f(nn_1n_2)\\
&&\qquad\qquad\qquad\qquad\qquad\qquad\qquad Q[u^2d_1d_2m_1m_2]Q[v^2d_3d_4n_1n_2]W_{\alpha,\beta}^{+}\bigg(\frac{d_1d_2d_3d_4mn}{\hat{q}^2}\bigg).
\end{eqnarray*}

The sum $\sum^{h}T_{\alpha,\beta}^{+}(f,\chi)$ can now be evaluated using the Petersson formula. For the off-diagonal terms coming from the Kloosterman sums, $mm_1m_2\ne nn_1n_2$, integration by parts and Lemma 3.3 imply that the total contribution is
\begin{equation}\label{242}
\ll_\varepsilon q^{-1+\varepsilon}\sum_{m_1m_2,n_1n_2\leq y_2}1\ll_\varepsilon q^{-1+\Delta_2+\varepsilon}.
\end{equation}
The main contribution to $\sum^{h}T_{\alpha,\beta}^{+}(f,\chi)$, which comes from the terms $mm_1m_2=nn_1n_2$, is
\begin{eqnarray*}
J_{2}^{+}(\alpha,\beta)&=&\frac{1}{\mathscr{L}^2}\sum_{\substack{mm_1m_2=nn_1n_2\\(uvd_1d_2d_3d_4,q)=1}}\frac{\mu(ud_1m_1)(\mu*\log)(ud_2m_2)\mu(vd_3n_1)(\mu*\log)(vd_4n_2)}{\psi(ud_1m_1)\overline{\psi}(ud_2m_2)\overline{\psi}(vd_3n_1)\psi(vd_4n_2)(d_1d_2m)^{1/2+\alpha}(d_3d_4n)^{1/2+\beta}}\\
&&\qquad \frac{\chi(d_{1}^{2}mm_1n_2)\overline{\chi}(d_{3}^{2}nm_2n_1)}{uv\sqrt{d_1d_2d_3d_4m_1m_2n_1n_2}}Q[u^2d_1d_2m_1m_2]Q[v^2d_3d_4n_1n_2]W_{\alpha,\beta}^{+}\bigg(\frac{d_1d_2d_3d_4mn}{\hat{q}^2}\bigg).
\end{eqnarray*}
Using \eqref{8} and \eqref{12} we obtain
\begin{eqnarray*}
J_{2}^{+}(\alpha,\beta)&=&\frac{1}{\mathscr{L}^2}\sum_{i,j}\frac{b_ib_ji!j!}{(\log y_2)^{i+j}}\bigg(\frac{1}{2\pi i}\bigg)^3\int_{(2)}\int_{(2)}\int_{(2)}G(s)g_{\alpha,\beta}^{+}(s)\hat{q}^{2s}y_{2}^{w_1+w_2}\\
&&\!\!\!\!\!\!\!\!\!\!\!\!\!\!\!\!\!\!\!\!\!\!\!\!\!\!\!\!\!\!\sum_{\substack{mm_1m_2=nn_1n_2\\(uvd_1d_2d_3d_4,q)=1}}\frac{\mu(ud_1m_1)(\mu*\log)(ud_2m_2)\mu(vd_3n_1)(\mu*\log)(vd_4n_2)}{\psi(ud_1m_1)\overline{\psi}(ud_2m_2)\overline{\psi}(vd_3n_1)\psi(vd_4n_2)(d_1d_2m)^{1/2+\alpha}(d_3d_4n)^{1/2+\beta}}\\
&&\!\!\!\!\!\!\!\!\!\!\!\!\!\!\!\!\!\!\!\!\!\!\frac{\chi(d_{1}^{2}mm_1n_2)\overline{\chi}(d_{3}^{2}nm_2n_1)}{uv\sqrt{d_1d_2d_3d_4m_1m_2n_1n_2}}\frac{1}{(u^2d_1d_2m_1m_2)^{w_1}}\frac{1}{(v^2d_3d_4n_1n_2)^{w_2}}\frac{1}{(d_1d_2d_3d_4mn)^s}\frac{dw_1}{w_{1}^{i+1}}\frac{dw_2}{w_{2}^{j+1}}\frac{ds}{s}.
\end{eqnarray*}
The sum in the integrand is
\begin{eqnarray*}
&&\!\!\!\!\!\!\!\!\!\!\frac{d^2}{d\gamma_1d\gamma_2}\sum_{\substack{mm_1m_{21}m_{22}=nn_1n_{21}n_{22}\\(u_1u_2v_1v_2d_1d_{21}d_{22}d_3d_{41}d_{42},q)=1}} \frac{\mu(u_1u_2d_1m_1)\mu(u_1d_{21}m_{21})}{\psi(u_1u_2d_1m_1)\overline{\psi}(u_1u_2d_{21}d_{22}m_{21}m_{22})}\nonumber\\
&&\qquad\qquad\qquad\qquad\qquad\qquad\qquad\qquad\quad\frac{\mu(v_1v_2d_3n_1)\mu(v_1d_{41}n_{21})}{\overline{\psi}(v_1v_2d_3n_1)\psi(v_1v_2d_{41}d_{42}n_{21}n_{22})}\\
&&\frac{\chi(d_{1}^{2}mm_1n_{21}n_{22})\overline{\chi}(d_{3}^{2}nm_{21}n_{22}n_1)}{(d_1d_{21}d_{22}m)^{1/2+\alpha}(d_3d_{41}d_{42}n)^{1/2+\beta}u_1u_2v_1v_2\sqrt{d_1d_{21}d_{22}d_3d_{41}d_{42}m_1m_{21}m_{22}n_1n_{21}n_{22}}}\nonumber\\
&&\qquad\qquad\qquad\frac{1}{(u_2d_{22}m_{22})^{\gamma_1}(v_2d_{42}n_{22})^{\gamma_2}}\frac{1}{(u_{1}^{2}u_{2}^{2}d_1d_{21}d_{22}m_1m_{21}m_{22})^{w_1}}\\
&&\qquad\qquad\qquad\qquad\qquad\frac{1}{(v_{1}^{2}v_{2}^{2}d_3d_{41}d_{42}n_1n_{21}n_{22})^{w_2}}\frac{1}{(d_1d_{21}d_{22}d_3d_{41}d_{42}mn)^s}\bigg|_{\gamma_1=\gamma_2=0}.
\end{eqnarray*}
As in the previous sections, up to an arithmetical factor $E(\alpha,\beta,\gamma_1,\gamma_2,w_1,w_2,s)$, the above sum is
\begin{eqnarray*}
&&\!\!\!\!\!\!\!\!\!\!\frac{\zeta(1+\alpha+\beta+2s)L(1+w_1+w_2,\chi^2)L(1+w_1+w_2,\overline{\chi}^2)\zeta(1+\gamma_1+\gamma_2+w_1+w_2)\zeta^2(1+w_1+w_2)}{L(1+\gamma_1+w_1+w_2,\overline{\chi}^2)L(1+\gamma_2+w_1+w_2,\chi^2)\zeta(1+\gamma_1+w_1+w_2)\zeta(1+\gamma_2+w_1+w_2)}\\
&&\!\!\!\!\!\!\!\!\!\!\ \frac{L(1+\beta+\gamma_1+w_1+s,\overline{\chi}^2)\zeta_q(1+\alpha+\gamma_1+w_1+s)\zeta_q(1+2w_1)}{L_q(1+\alpha+w_1+s,\chi^2)L(1+\beta+w_1+s,\overline{\chi}^2)\zeta_q(1+\alpha+w_1+s)\zeta(1+\beta+w_1+s)\zeta_q(1+\gamma_1+2w_1)}\\
&&\!\!\!\!\!\!\!\!\!\!\ \ \ \frac{L(1+\alpha+\gamma_2+w_2+s,\chi^2)\zeta_q(1+\beta+\gamma_2+w_2+s)\zeta_q(1+2w_2)}{L(1+\alpha+w_2+s,\chi^2)L_q(1+\beta+w_2+s,\overline{\chi}^2)\zeta(1+\alpha+w_2+s)\zeta_q(1+\beta+w_2+s)\zeta_q(1+\gamma_2+2w_2)}.
\end{eqnarray*}
Here $E(\alpha,\beta,\gamma_1,\gamma_2,w_1,w_2,s)$ is an arithmetical factor given by some Euler product that is absolutely and uniformly convergent in some product of fixed half-planes containing the origin. Again we first move the $w_1$-contour and $w_2$-contour to $\textrm{Re}(w_1)=\textrm{Re}(w_2)=\delta$, and then move the $s$-contour to $\textrm{Re}(s)=-(1-\varepsilon)\delta$, where $\delta,\varepsilon>0$ are some fixed small constants such that the arithmetical factor converges absolutely and $\Delta_2<1-\varepsilon$. In doing so we only cross a simple pole at $s=0$ and the contribution along the new line is $O_\varepsilon(q^{-\varepsilon})$. We denote
\begin{eqnarray*}
&&\!\!\!\!\!\!\!\!\!\!L_2(\alpha,\beta,\gamma_1,\gamma_2)=\bigg(\frac{1}{2\pi i}\bigg)^2\int_{(\delta)}\int_{(\delta)}y_{2}^{w_1+w_2}E(\alpha,\beta,\gamma_1,\gamma_2,w_1,w_2,0)\\
&&\!\!\!\!\!\!\!\!\!\!\quad\frac{L(1+w_1+w_2,\chi^2)L(1+w_1+w_2,\overline{\chi}^2)\zeta(1+\gamma_1+\gamma_2+w_1+w_2)\zeta^2(1+w_1+w_2)}{L(1+\gamma_1+w_1+w_2,\overline{\chi}^2)L(1+\gamma_2+w_1+w_2,\chi^2)\zeta(1+\gamma_1+w_1+w_2)\zeta(1+\gamma_2+w_1+w_2)}\\
&&\ \frac{L(1+\beta+\gamma_1+w_1,\overline{\chi}^2)\zeta_q(1+\alpha+\gamma_1+w_1)\zeta_q(1+2w_1)}{L_q(1+\alpha+w_1,\chi^2)L(1+\beta+w_1,\overline{\chi}^2)\zeta_q(1+\alpha+w_1)\zeta(1+\beta+w_1)\zeta_q(1+\gamma_1+2w_1)}\\
&&\ \ \ \frac{L(1+\alpha+\gamma_2+w_2,\chi^2)\zeta_q(1+\beta+\gamma_2+w_2)\zeta_q(1+2w_2)}{L(1+\alpha+w_2,\chi^2)L_q(1+\beta+w_2,\overline{\chi}^2)\zeta(1+\alpha+w_2)\zeta_q(1+\beta+w_2)\zeta_q(1+\gamma_2+2w_2)}\frac{dw_1}{w_{1}^{i+1}}\frac{dw_2}{w_{2}^{j+1}},
\end{eqnarray*}
so that
\begin{equation}\label{21}
J_{2}^{+}(\alpha,\beta)=\frac{\zeta(1+\alpha+\beta)}{\mathscr{L}^2}\sum_{i,j}\frac{b_ib_ji!j!}{(\log y_2)^{i+j}}\frac{\partial^2}{\partial\gamma_1\partial\gamma_2}L_2(\alpha,\beta,\gamma_1,\gamma_2)\bigg|_{\gamma_1=\gamma_2=0}+O_\varepsilon(q^{-\varepsilon}).
\end{equation}

We now take the derivatives with respect to $\gamma_1,\gamma_2$ and set $\gamma_1=\gamma_2=0$. We first note that by moving the contours to $\textrm{Re}(w_1)=\textrm{Re}(w_2)\asymp \mathscr{L}^{-1}$ and bounding the integral with absolute values, we get $L_2(\alpha,\beta,\gamma_1,\gamma_2)\ll \mathscr{L}^{i+j-1}$. Hence
\begin{equation}\label{302}
\frac{\partial^2}{\partial \gamma_1\partial \gamma_2}L_2(\alpha,\beta,\gamma_1,\gamma_2)\bigg|_{\gamma_1=\gamma_2=0}=L_{21}(\alpha,\beta)+L_{22}(\alpha,\beta)+O(\mathscr{L}^{i+j}),
\end{equation}
where
\begin{eqnarray*}
&&\!\!\!\!\!\!\!\!\!\!\!L_{21}(\alpha,\beta)=\bigg(\frac{1}{2\pi i}\bigg)^2\int\int_{(\mathscr{L}^{-1})}y_{2}^{w_1+w_2}E(\alpha,\beta,0,0,w_1,w_2,0)\zeta(1+w_1+w_2)\\
&&\ \frac{1}{L_q(1+\alpha+w_1,\chi^2)}\bigg(\frac{\zeta_q'(1+\alpha+w_1)}{\zeta_q(1+\alpha+w_1)\zeta(1+\beta+w_1)}-\frac{\zeta_q'(1+2w_1)}{\zeta(1+\beta+w_1)\zeta_q(1+2w_1)}\bigg)\\
&&\ \frac{1}{L_q(1+\beta+w_2,\overline{\chi}^2)}\bigg(\frac{\zeta_q'(1+\beta+w_2)}{\zeta(1+\alpha+w_2)\zeta_q(1+\beta+w_2)}-\frac{\zeta_q'(1+2w_2)}{\zeta(1+\alpha+w_2)\zeta_q(1+2w_2)}\bigg)\frac{dw_1}{dw_{1}^{i+1}}\frac{dw_2}{dw_{2}^{j+1}}
\end{eqnarray*}
and
\begin{eqnarray*}
&&\!\!\!\!\!\!\!\!\!\!\!L_{22}(\alpha,\beta)=\bigg(\frac{1}{2\pi i}\bigg)^2\int\int_{(\mathscr{L}^{-1})}y_{2}^{w_1+w_2}E(\alpha,\beta,0,0,w_1,w_2,0)\bigg(\zeta''(1+w_1+w_2)-\frac{\zeta'(1+w_1+w_2)^2}{\zeta(1+w_1+w_2)}\bigg)\\
&&\qquad\frac{1}{L_q(1+\alpha+w_1,\chi^2)\zeta(1+\beta+w_1)}\frac{1}{L_q(1+\beta+w_2,\overline{\chi}^2)\zeta(1+\alpha+w_2)}\frac{dw_1}{dw_{1}^{i+1}}\frac{dw_2}{dw_{2}^{j+1}}.
\end{eqnarray*}

As in the previous sections, we can replace $L_q(1+\alpha+w_1,\chi^2)L_q(1+\beta+w_2,\overline{\chi}^2)$ and $E(\alpha,\beta,0,0,w_1,w_2,0)$ in the integrals by $|L(1,\chi^2)|^2$ and $E(0,0,0,0,0,0,0)$, respectively, with an admissible error. It is also standard to check that $E(0,0,0,0,0,0,0)=\big(1+O(q^{-1})\big)|L(2,\chi^4)|^2$, a result we will use freely from now on. The new integrals have already been evaluated in [\textbf{\ref{B}}] (see Lemma 6.1 and Lemma 6.2). From there we obtain
\begin{eqnarray*}
L_{21}(\alpha,\beta)&=&\frac{|L(2,\chi^4)|^2(\log y_2)^{i+j+1}}{2|L(1,\chi^2)|^2}\frac{d^2}{dadb}\int_{0}^{1}y_{2}^{\alpha b+\beta a}(1-x)^2\frac{(x+a)^{i}}{i!}\frac{(x+b)^{j}}{j!}dx\bigg|_{a=b=0}\\
&&\qquad\qquad+O(\mathscr{L}^{i+j})+O(\mathscr{L}^{i+\varepsilon})+O(\mathscr{L}^{j+\varepsilon}),
\end{eqnarray*}
and
\begin{eqnarray*}
&&\!\!\!\!\!\!\!\!\!\!\!\!L_{22}(\alpha,\beta)=\frac{|L(2,\chi^4)|^2(\log y_2)^{i+j+1}}{|L(1,\chi^2)|^2}\frac{d^2}{dadb}\bigg\{\int_{0}^{1}\int_{0}^{x}\int_{0}^{x}y_{2}^{\alpha b+\beta a-\alpha u-\beta v}\frac{(x-u+a)^i}{i!}\frac{(x-v+b)^j}{j!}dudvdx\nonumber\\
&&\qquad\qquad-\tfrac{1}{2}\int_{0}^{1}\int_{0}^{x}y_2^{\alpha b+\beta a-\alpha u}\frac{(x-u+a)^i}{i!}\frac{(x+b)^{j+1}}{(j+1)!}dudx\\
&&\qquad\qquad-\tfrac{1}{2}\int_{0}^{1}\int_{0}^{x}y_2^{\alpha a+\beta b-\beta u}\frac{ (x+b)^{i+1}}{(i+1)!}\frac{(x-u+a)^j}{j!}dudx\nonumber\\
&&\qquad\qquad+\tfrac{1}{4}\int_{0}^{1}y_{2}^{\alpha b+\beta a}\frac{(x+a)^{i+1}}{(i+1)!}\frac{ (x+b)^{j+1}}{(j+1)!}dx\bigg\}\bigg|_{a=b=0}+O(\mathscr{L}^{i+j})+O(\mathscr{L}^{i+\varepsilon})+O(\mathscr{L}^{j+\varepsilon}).\nonumber
\end{eqnarray*}
We collect these evaluations, \eqref{302}, \eqref{21} and write $J_{2}^{+}(\alpha,\beta)$ in a compact form as
\begin{eqnarray}\label{402}
J_{2}^{+}(\alpha,\beta)&=&\bigg|\frac{L(2,\chi^4)}{L(1,\chi^2)}\bigg|^2\frac{1}{\mathscr{L}(\alpha+\beta)}\frac{d^2}{dadb}\bigg\{\tfrac{\Delta_2}{2}\int_{0}^{1}y_{2}^{\alpha b+\beta a}(1-x)^2Q(x+a)Q(x+b)dx\nonumber\\
&&\qquad+\Delta_2\int_{0}^{1}\int_{0}^{x}\int_{0}^{x}y_{2}^{\alpha b+\beta a-\alpha u-\beta v}Q(x-u+a)Q(x-v+b)dudvdx\nonumber\\
&&\qquad-\tfrac{\Delta_2}{2}\int_{0}^{1}\int_{0}^{x}y_2^{\alpha b+\beta a-\alpha u}Q(x-u+a)Q_1(x+b)dudx\nonumber\\
&&\qquad-\tfrac{\Delta_2}{2}\int_{0}^{1}\int_{0}^{x}y_2^{\alpha a+\beta b-\beta u}Q(x-u+a)Q_1(x+b)dudx\nonumber\\
&&\qquad+\tfrac{\Delta_2}{4}\int_{0}^{1}y_{2}^{\alpha b+\beta a}Q_1(x+a)Q_1(x+b)dx\bigg\}\bigg|_{a=b=0}+O(\mathscr{L}^{-1}).
\end{eqnarray}

\subsection{Deduction of Lemma \ref{307}}

Next we combine $J_{2}^{+}(\alpha,\beta)$ and $J_{2}^{-}(\alpha,\beta)$. We note that essentially $J_{2}^{-}(\alpha,\beta)=\hat{q}^{-2(\alpha+\beta)}J_{2}^{+}(-\beta,-\alpha)$. Writing
\begin{equation*}
U_2(\alpha,\beta;u,v)=\frac{y_{2}^{\alpha b+\beta a-\alpha u-\beta v}-\hat{q}^{-2(\alpha+\beta)}y_{2}^{-\beta b-\alpha a+\beta u+\alpha v}}{\alpha+\beta}.
\end{equation*}
Using \eqref{224} we have
\begin{equation*}
U_2(\alpha,\beta;u,v)=\mathscr{L}y_{2}^{\alpha b+\beta a-\alpha u-\beta v}\big(2+\Delta_2(a+b-u-v)\big)\int_{0}^{1}(\hat{q}^2y_{2}^{a+b-u-v})^{-(\alpha+\beta)t}dt.
\end{equation*}
In view of \eqref{402} and simplify, we obtain \eqref{303}.

\section{Proof of Theorem \ref{308}}

\subsection{Removing the harmonic weight}

To deduce Theorem \ref{308}, we first need to remove the weights $w_f$ in Lemmas 2.1--2.4 so that the lemmas also hold for the natural average,
\begin{equation*}
{\sum_{f\in S_{2}^{*}(q)}\!\!\!\!}^{n}\ A_f:=\sum_{f\in S_{2}^{*}(q)}\frac{A_f}{|S_{2}^{*}(q)|}.
\end{equation*}
This technique has been done several times (see [\textbf{\ref{KM}},\textbf{\ref{IS}},\textbf{\ref{KMV}},\textbf{\ref{KMV1}}]), so here we shall only illustrate the method for the mollified first moment of $M_2(f,\chi)$:
\begin{equation*}
{\sum_{f\in S_{2}^{*}(q)}\!\!\!\!}^{n}\ L(f.\chi,\tfrac{1}{2}+\alpha)M_2(f,\chi).
\end{equation*}

We borrow a general lemma from [\textbf{\ref{KM}}]:

\begin{lemma}
Let $(A_f)_{f\in S_{2}^{*}(q)}$ be a family of complex numbers satisfying
\begin{eqnarray}
&& {\sum_{f\in S_{2}^{*}(q)}\!\!\!\!}^{h}\ |A_f|\ll\mathscr{L}^B\quad\qquad\ \ \textrm{for some absolute $B>0$};\label{512}\\
&&\emph{Max}_{f\in S_{2}^{*}(q)}w_f|A_f|\ll \hat{q}^{-c}\quad\textrm{for some absolute $c>0$}.\label{513}
\end{eqnarray}
Then for all $\kappa>0$, there exists $\delta=\delta(B,c)>0$ such that
\begin{eqnarray*}
{\sum_{f\in S_{2}^{*}(q)}\!\!\!\!}^{n}\ A_f=\frac{1}{\zeta(2)}{\sum_{f\in S_{2}^{*}(q)}\!\!\!\!}^{h}\ w_f(\hat{q}^\kappa)A_f+O_{\kappa,B,c}(\hat{q}^{-\delta}),
\end{eqnarray*}
where
\begin{equation*}
w_f(\hat{q}^\kappa)=\sum_{lm^2\leq \hat{q}^\kappa}\frac{\lambda_f(l^2)}{lm^2}.
\end{equation*}
\end{lemma}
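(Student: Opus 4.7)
The plan is to relate the natural and harmonic averages using the Petersson norm formula, which expresses $w_f$ in terms of $L(1,\mathrm{sym}^2 f)$, and then to approximate this special value by the truncated Dirichlet series $w_f(\hat{q}^\kappa)$ via Mellin inversion.

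First, I would invoke the Petersson norm formula for weight $2$ and prime level $q$, which combined with $|S_2^*(q)|=q/12+O(1)$ yields
\begin{equation*}
\frac{1}{|S_2^*(q)|\,w_f} = \frac{L(1,\mathrm{sym}^2 f)}{\zeta(2)}\bigl(1+O(q^{-1})\bigr).
\end{equation*}
Writing the natural average of $A_f$ as $\sum_f w_f A_f/(|S_2^*(q)|w_f)$ reduces the lemma to approximating $L(1,\mathrm{sym}^2 f)$ by $w_f(\hat{q}^\kappa)$; the auxiliary factor $1+O(q^{-1})$ is absorbed into the error using hypothesis \eqref{512} together with the convexity bound $L(1,\mathrm{sym}^2 f)\ll \mathscr{L}^{O(1)}$.

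Second, I would establish the key approximation $L(1,\mathrm{sym}^2 f)=w_f(\hat{q}^\kappa)+R_f$. Matching Euler factors (and using that the $p=q$ local factor contributes $1+O(q^{-1})$ for $q$ prime) yields
\begin{equation*}
\sum_{l,m\ge 1}\frac{\lambda_f(l^2)}{l^{1+s}m^{2+2s}} = L(1+s,\mathrm{sym}^2 f)\bigl(1+O(q^{-1})\bigr),
\end{equation*}
so by truncated Perron,
\begin{equation*}
w_f(\hat{q}^\kappa) = \frac{1}{2\pi i}\int_{c-iT}^{c+iT}L(1+s,\mathrm{sym}^2 f)\,\hat{q}^{\kappa s}\,\frac{ds}{s} + E_f(T),
\end{equation*}
with $c\sim 1/\mathscr{L}$ and $T=\hat{q}^{A}$ for a suitably large constant $A$. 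Shifting the contour to $\mathrm{Re}(s)=-\eta$ for a fixed small $\eta>0$ picks up the residue $L(1,\mathrm{sym}^2 f)$ at $s=0$, while the standard convexity estimate for $L(s,\mathrm{sym}^2 f)$ on $\mathrm{Re}(s)=1-\eta$ controls the shifted integral by $\hat{q}^{-\eta\kappa}\mathscr{L}^{O(1)}$ uniformly in $f$.

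Finally, substitution leaves the error sum $\frac{1}{\zeta(2)}\sum_f w_f A_f R_f$ to control. The smooth contour-shift portion of $R_f$ is bounded uniformly by $\hat{q}^{-\eta\kappa}\mathscr{L}^{O(1)}$, so hypothesis \eqref{512} gives a total contribution of $O(\hat{q}^{-\delta_1})$ for some $\delta_1>0$. The Perron truncation error $E_f(T)$, however, is not uniformly small: it depends pointwise on the distribution of $\lambda_f(l^2)$ for $lm^2$ close to the sharp cutoff $\hat{q}^\kappa$, and can be large for exceptional forms. This is where hypothesis \eqref{513} becomes essential: since $w_f|A_f|\ll \hat{q}^{-c}$ for every $f$ and $|S_2^*(q)|\ll q$, the total contribution of $E_f(T)$ is bounded by $O(\hat{q}^{1-c+\varepsilon}) = O(\hat{q}^{-\delta_2})$ for a suitable choice of $T$. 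Taking $\delta=\min(\delta_1,\delta_2)$ completes the argument. The main technical obstacle is precisely controlling $E_f(T)$: without a smooth cutoff one cannot obtain a uniform-in-$f$ bound, so \eqref{513} is tailored to allow a crude pointwise treatment of the exceptional forms, and the balance between $\eta$, $T$, and the constants in \eqref{512}--\eqref{513} determines the final value of $\delta=\delta(B,c)$.
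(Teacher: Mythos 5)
The paper itself gives no proof of this lemma; it is quoted verbatim from Kowalski and Michel [\textbf{\ref{KM}}], so there is no in-text argument to compare against. Your proposal has a genuine gap at the contour shift. After moving the Perron integral to $\mathrm{Re}(s)=-\eta$, you assert that convexity bounds the shifted integral by $\hat{q}^{-\eta\kappa}\mathscr{L}^{O(1)}$ uniformly in $f$. That bound is missing the conductor dependence: $L(s,\mathrm{sym}^2 f)$ has degree three and analytic conductor $\asymp q^2(1+|t|)^3$, so convexity at $\sigma=1-\eta$ gives only $|L(1-\eta+it,\mathrm{sym}^2 f)|\ll q^{\eta+\varepsilon}(1+|t|)^{3\eta/2+\varepsilon}$. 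Since $q\asymp\hat{q}^{2}$, the shifted integral is $\ll\hat{q}^{(2-\kappa)\eta+\varepsilon}T^{O(\eta)}$, which does not decay for $\kappa<2$ — and the paper needs $\kappa<1-\Delta_2<1$, so that is exactly the regime in play. Unconditionally there is no way to make this line integral small uniformly in $f$: the quality of the approximation $w_f(\hat{q}^\kappa)\approx L(1,\mathrm{sym}^2 f)$ is controlled by possible zeros of $L(s,\mathrm{sym}^2 f)$ near $s=1$, and for an individual $f$ nothing better than a fixed power of $q$ is available by convexity.

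You also misattribute where hypothesis \eqref{513} enters. The Perron truncation error $E_f(T)$ is harmless: with $\lambda_f(l^2)\ll l^{\varepsilon}$ and $T=\hat{q}^{A}$ for $A$ a large fixed constant, the standard truncated-Perron estimate makes $E_f(T)\ll\hat{q}^{-100}$ uniformly in $f$, so \eqref{513} is not needed for that step. (Even on your own accounting, the bound you report, $O(\hat{q}^{1-c+\varepsilon})$, would require $c>1$ and hence does not apply to arbitrary $c>0$; moreover $|S_2^*(q)|\asymp q\asymp\hat{q}^2$, not $\hat{q}$.) The real role of \eqref{513} in the Kowalski--Michel argument is to absorb a small exceptional set of forms: one establishes an averaged statement over $f$ — a density or second-moment estimate for the family $\{L(s,\mathrm{sym}^2 f)\}_{f\in S_2^*(q)}$ — showing that $L(1,\mathrm{sym}^2 f)-w_f(\hat{q}^\kappa)\ll\hat{q}^{-\delta'}$ for all but $O(\hat{q}^{\varepsilon'})$ forms, and \eqref{513} then bounds the contribution of the exceptional forms by $O(\hat{q}^{\varepsilon'-c})$. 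Your skeleton (Petersson/Shimura norm formula, short Dirichlet polynomial, invoke \eqref{513} for what cannot be controlled pointwise) is the right shape, but the middle step requires an averaged input over $f$, not a pointwise contour shift.
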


We shall apply this lemma to $A_f=L(f.\chi,\tfrac{1}{2}+\alpha)M_2(f,\chi)$. Condition \eqref{512} follows immediately from Lemma 2.4 and Cauchy's inequality. For condition \eqref{513}, it is known that $w_f\ll \mathscr{L}/q$ [\textbf{\ref{GHL}}]. Hence \eqref{513} is satisfied using the convexity bound $L(f.\chi,\tfrac{1}{2}+\alpha)\ll \hat{q}^{1/2}$ and the trivial bound $M_2(f,\chi)\ll \hat{q}^{\Delta_2/2}$.

Thus we are left with estimating the sum
\begin{eqnarray*}
I=\frac{1}{\zeta(2)}{\sum_{f\in S_{2}^{*}(q)}\!\!\!\!}^{h}\ w_f(\hat{q}^\kappa)L(f.\chi,\tfrac{1}{2}+\alpha)M_2(f,\chi).
\end{eqnarray*}
Using the expression \eqref{514}, and applying Lemma 3.1 for the product $\lambda_f(m_1n)\lambda_f(l^2)$ we have
\begin{eqnarray*}
I&=&\frac{1}{\zeta(2)\mathscr{L}}\sum_{\substack{d_1l_1=d_2l_2\\d_1d_2=d_3d_4\\d_1l_1m^2\leq \hat{q}^\kappa\\(ud_1d_2d_3d_4,q)=1}}\frac{\mu(ud_3m_1)(\mu*\log)(m_2)\chi^2(u)\chi(d_3d_4m_1n)\overline{\chi}(m_2)}{\psi(ud_3m_1)\overline{\psi}(m_2)(d_4n)^{1/2+\alpha}u^{1+\alpha}\sqrt{d_3m_1m_2}d_1l_1m^2}\\
&&\qquad\qquad\qquad\qquad\lambda_f(m_1nl_1l_2)\lambda_f(m_2)Q[ud_3m_1m_2]V\bigg(\frac{ud_4n}{\hat{q}^{2+\varepsilon}}\bigg)+O_{\varepsilon,B}(\hat{q}^{-B+\Delta_2/2+\varepsilon}).
\end{eqnarray*}
For the off-diagonal terms, $m_2\ne m_1nl_1l_2$, Lemma \ref{102} implies that the total contribution is
\begin{equation*}
\ll_\varepsilon q^{-1/2+\varepsilon}\sum_{\substack{l\leq \hat{q}^{\kappa}\\m_1m_2\leq y_2}}1\ll_\varepsilon q^{-1/2+\Delta_2/2+\kappa/2+\varepsilon}.
\end{equation*} 
We shall choose $\kappa<1-\Delta_2$ so that the above error term is admissible. The main contribution to $I$, which comes from the terms $m_2=m_1nl_1l_2$, is
\begin{equation*}
\frac{1}{\zeta(2)\mathscr{L}}\sum_{\substack{d_1l_1=d_2l_2\\d_1d_2=d_3d_4\\m_2=m_1nl_1l_2\\d_1l_1m^2\leq \hat{q}^\kappa\\(ud_1d_2d_3d_4,q)=1}}\frac{\mu(ud_3m_1)(\mu*\log)(m_2)\chi^2(u)\chi(d_3d_4)\overline{\chi}(l_1l_2)}{\psi(ud_3m_1)\overline{\psi}(m_2)(d_4n)^{1/2+\alpha}u^{1+\alpha}\sqrt{d_3m_1m_2}d_1l_1m^2}Q[ud_3m_1m_2]V\bigg(\frac{ud_4n}{\hat{q}^{2+\varepsilon}}\bigg).
\end{equation*}
Using \eqref{8}, \eqref{9} and Perron's formula we can write this as
\begin{eqnarray*}
I'&=&\frac{1}{\zeta(2)\mathscr{L}}\sum_{j}\frac{b_jj!}{(\log y_2)^j}\bigg(\frac{1}{2\pi i}\bigg)^3\int_{(2)}\int_{(2)}\int_{(2)}e^{s^2}\hat{q}^{(2+\varepsilon)s}y_{2}^{w_1}\hat{q}^{\kappa w_2}\\
&&\qquad\qquad\qquad\sum_{\substack{d_1l_1=d_2l_2\\d_1d_2=d_3d_4\\m_2=m_1nl_1l_2\\(ud_1d_2d_3d_4,q)=1}}\frac{\mu(ud_3m_1)(\mu*\log)(m_2)\chi^2(u)\chi(d_3d_4)\overline{\chi}(l_1l_2)}{\psi(ud_3m_1)\overline{\psi}(m_2)(d_4n)^{1/2+\alpha}u^{1+\alpha}\sqrt{d_3m_1m_2}d_1l_1m^2}\\
&&\qquad\qquad\qquad\qquad\qquad\qquad\qquad\frac{1}{(ud_3m_1m_2)^{w_1}}\frac{1}{(d_1l_1m^2)^{w_2}}\frac{1}{(ud_4n)^s}\frac{dw_1}{w_{1}^{j+1}}\frac{dw_2}{w_2}\frac{ds}{s}.
\end{eqnarray*}
The sum in the integrand is
\begin{eqnarray*}
&&-\frac{d}{d\gamma}\sum_{\substack{d_1l_1=d_2l_2\\d_1d_2=d_3d_4\\m_{21}m_{22}=m_1nl_1l_2\\(ud_1d_2d_3d_4,q)=1}} \frac{\mu(ud_3m_1)\mu(m_{21})\chi^2(u)\chi(d_3d_4)\overline{\chi}(l_1l_2)}{\psi(ud_3m_1)\overline{\psi}(m_{21}m_{22})(d_4n)^{1/2+\alpha}u^{1+\alpha}\sqrt{d_3m_1m_{21}m_{22}}d_1l_1m^2m_{22}^\gamma}\\
&&\qquad\qquad\qquad\qquad\qquad\qquad\qquad\qquad\qquad\frac{1}{(ud_3m_1m_{21}m_{22})^{w_1}}\frac{1}{(d_1l_1m^2)^{w_2}}\frac{1}{(ud_4n)^s}\bigg|_{\gamma=0}.
\end{eqnarray*}
After some standard calculations, the above sum is
\begin{equation}\label{516}
\frac{F(\alpha,\gamma,w_1,w_2,s)\zeta(2+2w_2)\zeta(1+\alpha+\gamma+w_1+s)\zeta(1+2w_1)}{L_q(1+\alpha+w_1+s,\chi^2)\zeta(1+\alpha+w_1+s)\zeta(1+\gamma+2w_1)},
\end{equation}
where $F(\alpha,\gamma,w_1,w_2,s)$ is an arithmetical factor given by some Euler product that is absolutely and uniformly convergent in some product of fixed half-planes containing the origin. We first move the $w_1$-contour and the $w_2$-contour to $\textrm{Re}(w_1)=\delta$ and $\textrm{Re}(w_2)=-\delta$, and then move the $s$-contour to $\textrm{Re}(s)=-2\delta/(2+\varepsilon)$, where $\delta>0$ is some fixed small constant such that the arithmetical factor converges absolutely. In doing so we only cross simple poles at $w_2=0$ and $s=0$. By bounding the integral by absolute values, the contribution along the new line is
\begin{equation*}
\ll_{\delta} \hat{q}^{-2\delta}y_{2}^{\delta}\hat{q}^{\kappa\delta}\ll_{\delta} \hat{q}^{-(2-\kappa-\Delta_2)\delta}.
\end{equation*} 
Hence 
\begin{equation*}
I'=-\frac{1}{\mathscr{L}}\sum_{j}\frac{b_jj!}{(\log y_2)^j}\frac{\partial}{\partial\gamma}I''(\gamma)\bigg|_{\gamma=0},
\end{equation*}
where
\begin{eqnarray*}
I''(\gamma)=\frac{1}{2\pi i}\int_{(\delta)}y_{2}^{w_1}\frac{F(\alpha,\gamma,w_1,0,0)}{L_q(1+\alpha+w_1,\chi^2)}\frac{\zeta(1+\alpha+\gamma+w_1)\zeta(1+2w_1)}{\zeta(1+\alpha+w_1)\zeta(1+\gamma+2w_1)}\frac{dw_1}{w_{1}^{j+1}}+O_{\delta}(\hat{q}^{-(2-\kappa-\Delta_2)\delta}).
\end{eqnarray*}
This is precisely \eqref{515} with $B(\alpha,\gamma,w,0)$ being replaced by $F(\alpha,\gamma,w_1,0,0)$. Thus we are left to check that $F(0,0,0,0,0)=\big(1+O(q^{-1})\big)L(2,\chi^4)$. Taking $\alpha=\gamma=w_2=0$ and $w_1=s$ in \eqref{516} we have
\begin{equation*}
F(0,0,s,0,s)=L_q(1+2s,\chi^2)\sum_{\substack{d_1l_1=d_2l_2\\d_1d_2=d_3d_4\\m_{21}m_{22}=m_1nl_1l_2\\(ud_1d_2d_3d_4,q)=1}} \frac{\mu(ud_3m_1)\mu(m_{21})\chi^2(u)\chi(d_3d_4)\overline{\chi}(l_1l_2)}{\psi(ud_3m_1)\overline{\psi}(m_{21}m_{22})(u^2d_3d_4m_1m_{21}m_{22}n)^{1/2+s}d_1l_1}.
\end{equation*}
Consider the sum over $m_{21}$. The above sum vanishes unless $m_{21}m_{22}=1$, i.e. $m_{21}m_{22}m_1nl_1l_2=1$. Hence
\begin{eqnarray*}
F(0,0,s,0,s)&=&L_q(1+2s,\chi^2)\sum_{\substack{d_3|d^2\\(ud,q)=1}} \frac{\mu(ud_3)\chi^2(u)\chi^2(d)}{\psi(ud_3)u^{1+2s}d^{2+2s}}\\
&=&L_q(1+2s,\chi^2)\sum_{\substack{d_3|d^2\\(d,q)=1}} \frac{\mu(d_3)\chi^2(d)}{\psi(d_3)d^{2+2s}}\sum_{(u,d_3q)=1}\frac{\mu(u)\chi^2(u)}{\psi(u)u^{1+2s}}\\
&=&L_q(1+2s,\chi^2)\sum_{(d,q)=1}\frac{\chi^2(d)}{d^{2+2s}}\sum_{d_3|d^2}\frac{\mu(d_3)}{\psi(d_3)}\prod_{p\nmid d_3q}\bigg(1-\frac{\chi^2(p)}{\psi(p)p^{1+2s}}\bigg)\\
&=&\big(1+O(q^{-1})\big)L_q(2+4s,\chi^4)\sum_{(d,q)=1}\frac{\chi^2(d)}{d^{2+2s}}\prod_{p|d}\bigg\{1-\frac{1}{\psi(p)}\bigg(1-\frac{\chi^2(p)}{\psi(p)p^{1+2s}}\bigg)^{-1}\bigg\}.
\end{eqnarray*}
For $s=0$, the terms vanish unless $d=1$. So $F(0,0,0,0,0)=\big(1+O(q^{-1})\big)L(2,\chi^4)$. That verifies the removal of the harmonic weight for the mollified first moment corresponding to $M_2(f,\chi)$.

\subsection{Deduction of Theorem \ref{308}}

Let
\begin{equation*}
S_{1,k}(M)={\sum_{f\in S_{2}^{*}(q)}\!\!\!\!}^{n}\ \varepsilon_{f.\chi}L^{(k)}(f.\overline{\chi},\tfrac{1}{2})M(f,\chi)
\end{equation*}
and
\begin{equation*}
S_{2,k}(M)={\sum_{f\in S_{2}^{*}(q)}\!\!\!\!}^{n}\ \big|L^{(k)}(f.\chi,\tfrac{1}{2})M(f,\chi)\big|^2.
\end{equation*}
By Cauchy's inequality we have
\begin{equation}\label{510}
\frac{1}{|S_2^{*}(q)|}\sum_{\substack{f\in S_{2}^{*}(q)\\L^{(k)}(f.\chi,\frac{1}{2})\ne0}}1\geq \frac{\big|S_{1,k}(M)\big|^2}{S_{2,k}(M)}.
\end{equation}

The functional equation gives
\begin{equation*}
\varepsilon_{f.\chi}L(f.\overline{\chi},\tfrac{1}{2}+\alpha)=\hat{q}^{-2\alpha}\frac{\Gamma(1-\alpha)}{\Gamma(1+\alpha)}L(f.\chi,\tfrac{1}{2}-\alpha).
\end{equation*}
Hence in view of the above subsection and Lemma \ref{310}, we get
\begin{eqnarray*}
&&\!\!\!\!\!\!\!\!\!\!\!\!\!\!{\sum_{f\in S_{2}^{*}(q)}\!\!\!}^{n}\ \varepsilon_{f.\chi}L(f.\overline{\chi},\tfrac{1}{2}+\alpha)M(f,\chi)=\frac{L(2,\chi^4)}{L(1,\chi^2)}\hat{q}^{-2\alpha}\frac{\Gamma(1-\alpha)}{\Gamma(1+\alpha)}\\
&&\qquad\qquad\qquad\qquad\qquad\qquad\bigg(P(1)+\Delta_2\int_{0}^{1}y_{2}^{\alpha(1-x)}Q(x)dx-\tfrac{\Delta_2}{2}Q_{1}(1)\bigg)+O(\mathscr{L}^{-1}).
\end{eqnarray*}
Thus, using Cauchy's theorem,
\begin{equation*}
S_{1,k}(M)=(-1)^k\frac{L(2,\chi^4)}{L(1,\chi^2)}\bigg(2^kP(1)+\Delta_2\int_{0}^{1}\big(2-\Delta_2(1-x)\big)^kQ(x)dx-2^{k-1}\Delta_2Q_1(1)\bigg)\mathscr{L}^k +O_k(\mathscr{L}^{k-1}).
\end{equation*}
Similarly,
\begin{eqnarray*}
S_{2,k}(M)&=&\bigg|\frac{L(2,\chi^4)}{L(1,\chi^2)}\bigg|^2\frac{d^2}{dadb}\bigg\{\int_{0}^{1}\int_{0}^{1}\big((2+\Delta_1(a+b))t-\Delta_1a\big)^k\big((2+\Delta_1(a+b))t-\Delta_1b\big)^k\\
&&\qquad\qquad\qquad\qquad\qquad\qquad\qquad\qquad\big(2\Delta_{1}^{-1}+a+b\big)P(x+a)P(x+b)dxdt\\
&&\!\!\!\!\!\!\!\!\!\!\!\!\!\!\!\!\!\!\!\!\!\!\!\!\!\!\!\!\!\!\!\!\!\!+\tfrac{2\Delta_2}{\Delta_1}\int_{0}^{1}\int_{0}^{1}\int_{0}^{x}\big((2+\Delta_1a+\Delta_2(b-u))t-\Delta_1a+\Delta_2u\big)^k\big((2+\Delta_1a+\Delta_2(b-u))t-\Delta_2b\big)^k\\
&&\qquad\qquad\qquad\big(2+\Delta_1a+\Delta_2(b-u)\big)P\big(1-\tfrac{\Delta_2(1-x)}{\Delta_1}+a\big)Q(x-u+b)dudxdt\\
&&\!\!\!\!\!\!\!\!\!\!\!\!\!\!\!\!\!\!\!\!\!\!\!\!\!\!\!\!\!\!\!\!\!\!-\tfrac{\Delta_2}{\Delta_1}\int_{0}^{1}\int_{0}^{1}\big((2+\Delta_1a+\Delta_2b)t-\Delta_1a\big)^k\big((2+\Delta_1a+\Delta_2b)t-\Delta_2b\big)^k\\
&&\qquad\qquad\qquad \big(2+\Delta_1a+\Delta_2b\big)P\big(1-\tfrac{\Delta_2(1-x)}{\Delta_1}+a\big)Q_1(x+b)dxdt\\
&&\!\!\!\!\!\!\!\!\!\!\!\!\!\!\!\!\!\!\!\!\!\!\!\!\!\!\!\!\!\!\!\!\!\!+\tfrac{\Delta_2}{2}\int_{0}^{1}\int_{0}^{1}\big((2+\Delta_2(a+b))t-\Delta_2a\big)^k\big((2+\Delta_2(a+b))t-\Delta_2b\big)^k\\
&&\qquad\qquad\qquad\big(2+\Delta_2(a+b)\big)(1-x)^2Q(x+a)Q(x+b)dxdt\nonumber\\
&&\!\!\!\!\!\!\!\!\!\!\!\!\!\!\!\!\!\!\!\!\!\!\!\!\!\!\!\!\!\!\!\!\!\!+\Delta_2\int_{0}^{1}\int_{0}^{1}\int_{0}^{x}\int_{0}^{x}\big((2+\Delta_2(a+b-u-v))t-\Delta_2(a-v)\big)^k\big((2+\Delta_2(a+b-u-v))t-\Delta_2(b-u)\big)^k\nonumber\\
&&\qquad\qquad\qquad\big(2+\Delta_2(a+b-u-v)\big)Q(x-u+a)Q(x-v+b)dudvdxdt\nonumber\\
&&\!\!\!\!\!\!\!\!\!\!\!\!\!\!\!\!\!\!\!\!\!\!\!\!\!\!\!\!\!\!\!\!\!\!-\Delta_2\int_{0}^{1}\int_{0}^{1}\int_{0}^{x}\big((2+\Delta_2(a+b-u))t-\Delta_2a\big)^k\big((2+\Delta_2(a+b-u))t-\Delta_2(b-u)\big)^k\\
&&\qquad\qquad\qquad\big(2+\Delta_2(a+b-u)\big)Q(x-u+a)Q_1(x+b)dudxdt\nonumber\\
&&\!\!\!\!\!\!\!\!\!\!\!\!\!\!\!\!\!\!\!\!\!\!\!\!\!\!\!\!\!\!\!\!\!\!+\tfrac{\Delta_2}{4}\int_{0}^{1}\int_{0}^{1}\big((2+\Delta_2(a+b))t-\Delta_2a\big)^k\big((2+\Delta_2(a+b))t-\Delta_2b\big)^k\\
&&\qquad\qquad\qquad\big(2+\Delta_2(a+b)\big)Q_1(x+a)Q_1(x+b)dxdt\bigg\}\bigg|_{a=b=0}\mathscr{L}^{2k} +O_k(\mathscr{L}^{2k-1}).
\end{eqnarray*}

Specific values for $p_{k,\chi}$, for small $k$, are calculated with Mathematica. The results are summarised in the table below.
\begin{table}[ht]
\caption{The lower bounds for the proportions $p_{k,\chi}$ in the table are obtained by using the inequality \eqref{510} and the expressions for $S_{1,k}(M)$ and $S_{2,k}(M)$ given above with $\Delta_1=\Delta_2=1$.}\label{eqtable}
\renewcommand\arraystretch{1.5}
\noindent\[
\begin{array}{|c|c|c|c|}
\hline
\ k \ & P(x) \ & Q(x) \ & \text{Lower bound for}\ p_{k,\chi} \ \\
\hline
$0$ & 1.05x-0.05x^2 & 0.9x &0.3411\\
$1$ & 0.87x+0.13x^3 & 0.15x - 0.11x^2 &0.7553\\
$2$ & 0.75x+0.25x^3 & 0.06x-0.05x^2 &0.9085\\
$3$ & 0.62x+0.32x^3+0.06x^5 & 0.03x-0.04x^2 &0.9643\\
\hline
\end{array}
\]
\end{table}

For general $k$ $(k\geq4)$, we take $Q(x)=0$ and obtain
\begin{equation*}
S_{1,k}(M)=(-1)^k\frac{L(2,\chi^4)}{L(1,\chi^2)}2^kP(1)\mathscr{L}^k+O_k(\mathscr{L}^{k-1})
\end{equation*}
and
\begin{equation*}
S_{2,k}(M)=\bigg|\frac{L(2,\chi^4)}{L(1,\chi^2)}\bigg|^2\bigg\{\frac{2^{2k+1}}{2k+1}\int_{0}^{1}P'(x)^2dx+2^{2k-1}P(1)^2+\frac{2^{2k-1}k^2}{2k-1}\int_{0}^{1}P(x)^2dx\bigg\}\mathscr{L}^{2k}+O_k(\mathscr{L}^{2k-1}).
\end{equation*}
Hence
\begin{equation*}
p_{k,\chi}\geq\bigg\{\frac{2}{2k+1}\int_{0}^{1}P'(x)^2dx+\frac{1}{2}+\frac{k^2}{2(2k-1)}\int_{0}^{1}P(x)^2dx\bigg\}^{-1}.
\end{equation*}
We need to maximise the expression on the right hand side under the conditions that $P(0)=0$ and $P(1)=1$. This optimisation problem has been solved explicitly using the calculus of variations in [\textbf{\ref{MV}},\textbf{\ref{BM}}]. From that we get
\begin{equation*}
p_{k,\chi}\geq\bigg\{\frac{1}{2}+\frac{k}{\sqrt{4k^2-1}}\coth\bigg[\frac{k}{2}\sqrt{\frac{2k+1}{2k-1}}\bigg]\bigg\}^{-1},
\end{equation*} 
and hence
\begin{equation*}
p_{k,\chi}\geq 1-\frac{1}{16k^2}+O(k^{-4}).
\end{equation*}
This completes the proof of the theorem.

\section{Proof of Theorem 1.2}

Our theorem is similar to Theorem 1.4 of Kowalski, Michel and VanderKam [\textbf{\ref{KMV}}]. We give a sketch of the proof following their Section 8, and refer the readers to [\textbf{\ref{KMV}},\textbf{\ref{KM}}] for complete details.

We shall need an upper bound for the average rank squared.

\begin{proposition}\label{501}
There exists an absolute constant $C>0$ such that for all $q$ prime
\begin{equation*}
{\sum_{f\in S_{2}^{*}(q)}\!\!\!\!}^{n}\ r_{f.\chi}^{2}\leq C.
\end{equation*}
\end{proposition}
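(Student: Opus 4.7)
The plan is to deduce Proposition \ref{501} from the Riemann--Weil explicit formula combined with the Petersson trace formula, following the strategy of Kowalski--Michel (cf.\ Proposition 8.1 of [\textbf{\ref{KMV}}]). First, I would fix a smooth, even, non-negative function $\phi$ with $\phi(0)\geq 1$ and whose Fourier transform $\widehat{\phi}$ has compact support contained in $(-\eta,\eta)$ for some small but fixed $\eta>0$ to be chosen later. Applying the explicit formula to $L(f.\chi,s)$, and using that $\phi$ is non-negative on the critical line, one obtains a pointwise inequality of the form
\begin{equation*}
r_{f.\chi}\leq\sum_{\rho}\phi\!\left(\tfrac{(\rho-1/2)\mathscr{L}}{2\pi i}\right)=A_q+B_f,
\end{equation*}
where $A_q\ll 1$ absorbs the archimedean factor and the $\widehat{\phi}(0)$-term coming from the log-conductor, and
\begin{equation*}
B_f=-\frac{2}{\mathscr{L}}\sum_{p,k\geq 1}\frac{\lambda_f(p^k)\chi^k(p)\log p}{p^{k/2}}\widehat{\phi}\!\left(\frac{k\log p}{\mathscr{L}}\right).
\end{equation*}
The support condition on $\widehat{\phi}$ restricts the outer sum to $p^k\ll \hat{q}^{\eta}$.

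Next I would square and average. From $r_{f.\chi}^2\leq 2A_q^2+2|B_f|^2$, it suffices to prove $\sum_{f}^{n}|B_f|^2\ll 1$. Using Lemma \ref{101} to expand each product $\lambda_f(p^k)\lambda_f(p'^{k'})$ as a short sum of $\lambda_f(n)$ with $n\ll\hat{q}^{2\eta}$, and then applying the Petersson formula (Lemma \ref{102}), the harmonic second moment splits into a diagonal piece
\begin{equation*}
\frac{4}{\mathscr{L}^2}\sum_{p,k\geq 1}\frac{(\log p)^2}{p^{k}}\,\widehat{\phi}\!\left(\tfrac{k\log p}{\mathscr{L}}\right)^2\ll 1,
\end{equation*}
which converges absolutely uniformly in $q$, and an off-diagonal Kloosterman contribution bounded by $O_\varepsilon(\hat{q}^{-1+2\eta+\varepsilon})$ via Weil's bound. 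Choosing any fixed $\eta<1/2$ renders the off-diagonal $o(1)$ and completes the harmonic estimate.

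Finally, I would transfer from the harmonic average to the natural average using the device employed in Section 9.1 (Lemma 9.1 of [\textbf{\ref{KM}}]) applied to $A_f=|B_f|^2$: the harmonic bound just established furnishes hypothesis \eqref{512}, while the trivial estimates $w_f\ll\mathscr{L}/q$ and $|B_f|\ll\hat{q}^{\eta}$ confirm hypothesis \eqref{513}. This yields $\sum_{f}^{n}|B_f|^2\ll 1$, hence $\sum_{f}^{n}r_{f.\chi}^2\leq C$ for an absolute $C>0$, as required.

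The main obstacle in this plan is calibrating $\eta$ so that the Kloosterman off-diagonal is controllable purely by the Petersson formula of Lemma \ref{102}; any $\eta<1/2$ suffices because we only need \emph{some} absolute constant and not a sharp one. This is in marked contrast with the mollified moments of Section 2, where delicate cancellation is required and one must invoke Lemma 3.3 to extract strong averages of Kloosterman sums.
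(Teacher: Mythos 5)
Your plan diverges from the paper's argument in a crucial way, and as written it has a genuine gap at the very first step. The paper does not attempt a direct Petersson bound for the rank; instead, after invoking the explicit formula as in Brumer and Kowalski--Michel, it \emph{reduces} Proposition \ref{501} to a zero-density estimate (Lemma 10.1), which it then proves via Selberg's argument using a mollified second moment (Lemma 10.2). That density input is not a technical convenience: it is what makes the explicit-formula step legitimate unconditionally, and your proposal has dropped it entirely.

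Concretely, the gap is in the claimed pointwise inequality $r_{f.\chi}\leq A_q+B_f$. The explicit formula is an equality whose left-hand side is a sum over \emph{all} nontrivial zeros $\rho=\beta+i\gamma$ of $L(f.\chi,s)$, evaluated at $\phi\bigl(\tfrac{(\rho-1/2)\mathscr{L}}{2\pi i}\bigr)$. For $\beta\neq\tfrac12$ this is $\phi$ at a point whose imaginary part is $\tfrac{(\beta-1/2)\mathscr{L}}{2\pi}$, which can be as large as $\mathscr{L}/4\pi$. Any $\phi$ with $\widehat\phi$ compactly supported is entire of exponential type, and pairing $\rho$ with $1-\bar\rho$ shows the paired contribution is $2\,\mathrm{Re}\,\phi(u+iv)$ with $|v|$ potentially of order $\mathscr{L}$. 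For the standard choices (Fej\'er kernel, etc.) this real part is \emph{not} nonnegative once $|v|$ is comparable to the period --- one can check it directly --- and no entire function of exponential type with $\widehat\phi$ compactly supported is nonnegative on a horizontal strip of unbounded width. Hence nonnegativity of $\phi$ on the critical line alone does \emph{not} yield $r_{f.\chi}\phi(0)\leq\sum_\rho\phi(\cdots)$, and nothing in your $A_q\ll1$ bookkeeping absorbs the (possibly negative) off-line zero contribution. Without GRH one simply cannot discard those terms; the count of such zeros must be controlled, which is exactly what the paper's Lemma 10.1 does (in a form tailored to the \emph{second} moment of ranks, bounding $\sum_f N(f.\chi;\sigma,t_1,t_2)^2$).

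The remaining steps of your proposal are essentially sound in spirit --- the diagonal Petersson estimate $\tfrac{4}{\mathscr{L}^2}\sum_{p,k}\tfrac{(\log p)^2}{p^k}\widehat\phi(\tfrac{k\log p}{\mathscr{L}})^2\ll1$ is correct via Mertens, and $\eta<1/2$ controls the Kloosterman off-diagonal by Lemma \ref{102} alone --- but they would only bound a harmonic average of $|B_f|^2$, not $r_{f.\chi}^2$. (Also, the harmonic-to-natural transfer via the lemma of Section 9 produces an extra $w_f(\hat q^\kappa)=\sum_{lm^2\leq\hat q^\kappa}\lambda_f(l^2)/lm^2$ inside the average, so you would need to redo the Petersson computation with the additional $\lambda_f(l^2)$ factor rather than simply quoting the bound on $\sum^h|B_f|^2$; this is routine, but worth flagging.) To repair the argument you must either prove a density theorem of the shape of Lemma 10.1 --- which the paper does by mollifying $L(f.\chi,s)$ with $M(f.\chi,s)$ and invoking Selberg's Lemma 14 and Theorem 4 --- or find some other way to show that the zeros off the critical line contribute $O(1)$ on average to the explicit formula.
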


We now deduce Theorem 1.2 and postpone the proof of Proposition 10.1 until later. Let $0<m<2$ be fixed. We consider the sum
\begin{equation*}
T(m)=\sum_{f\in S_{2}^{*}(q)}r_{f.\chi}^{m}.
\end{equation*} 
We write $T(m)=T_1+T_2$ where
\begin{equation*}
T_1=\sum_{\substack{f\in S_{2}^{*}(q)\\r_{f.\chi}>n}}r_{f.\chi}^{m}\quad\textrm{and}\quad T_2=\sum_{\substack{f\in S_{2}^{*}(q)\\r_{f.\chi}\leq n}}r_{f.\chi}^{m}.
\end{equation*}
By H\"older's inequality we have
\begin{eqnarray*}
T_1\leq \bigg(\sum_{f}r_{f.\chi}^{2}\bigg)^{m/2}\bigg(\sum_{r_{f.\chi}>n}1\bigg)^{1-m/2}.
\end{eqnarray*}
If the analytic rank $r_{f.\chi}>n$, then $L^{(n)}(f.\chi,1/2)=0$. The proportion of the forms satisfying this condition is
\begin{equation}\label{502}
\leq 1-p_{n,\chi}+o_n(1).
\end{equation} 
Combining with Proposition \ref{501} we obtain
\begin{eqnarray*}
T_1&\ll& |S_{2}^{*}(q)|^{m/2}\bigg(\big(1-p_{n,\chi}+o_n(1)\big)|S_{2}^{*}(q)|\bigg)^{1-m/2}\\
&\ll&\big(n^{-2+m}+o_n(1)\big)|S_{2}^{*}(q)|.
\end{eqnarray*}
To estimate $T_2$, partial summation yields
\begin{equation*}
T_2=\sum_{k=1}^{n}k^m\bigg(\sum_{r_{f.\chi}=k}1\bigg)\leq \sum_{k=1}^{n}\big(k^m-(k-1)^m\big)\bigg(\sum_{r_{f.\chi}\geq k}1\bigg).
\end{equation*}
Hence in view of \eqref{502},
\begin{eqnarray*}
T_2\leq \bigg(\sum_{k=0}^{n-1}\big((k+1)^m-k^m\big)(1-p_{k,\chi})+o_n(1)\bigg)|S_{2}^{*}(q)|.
\end{eqnarray*}
We extend the sum to the infinite series and add the contribution of $T_1$ to obtain
\begin{equation*}
T(m)\leq \bigg(\sum_{k=0}^{\infty}\big((k+1)^m-k^m\big)(1-p_{k,\chi})+o(1)\bigg)|S_{2}^{*}(q)|.
\end{equation*}
In particular when $m=1$, using Theorem 1.1 and the fact that for $k\geq4$,
\begin{equation*}
p_{k,\chi}\geq \bigg\{\frac{1}{2}+\frac{k}{\sqrt{4k^2-1}}\coth\bigg[\frac{k}{2}\sqrt{\frac{2k+1}{2k-1}}\bigg]\bigg\}^{-1}
\end{equation*}
we deduce that
\begin{equation*}
\sum_{f\in S_{2}^{*}(q)}r_{f.\chi}\leq \big(1.0656+o(1)\big)|S_{2}^{*}(q)|,
\end{equation*}
and that proves Theorem 1.2.

\begin{proof}[Proof of Proposition \ref{501}]

Applying the explicit formula as in [\textbf{\ref{B1}}] and proceeding in the same way as [\textbf{\ref{KM}}] (Section 8), the proof is essentially reduced to a density theorem for zeros of the considered $L$-functions.

For any $\sigma\geq\tfrac{1}{2}$, $t_1$ and $t_2$ real, let $N(f.\chi;\sigma,t_1,t_2)$ be the number of zeros $\rho=\beta+i\gamma$ of $L(f.\chi,s)$ which satisfy $\beta\geq\sigma$ and $t_1\leq\gamma\leq t_2$. Then it suffices to show that

\begin{lemma}
There exist absolute constants $B,c>0$ such that for any $\sigma\geq\tfrac{1}{2}+\mathscr{L}^{-1}$ and $t_2-t_1\geq \mathscr{L}^{-1}$, we have
\begin{equation*}
\sum_{f\in S_{2}^{*}(q)}N(f.\chi;\sigma,t_1,t_2)^2\ll_{c} \big(1+|t_1|+|t_2|\big)^B\hat{q}^{2-c(\sigma-1/2)}\mathscr{L}^2.
\end{equation*}
\end{lemma}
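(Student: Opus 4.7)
The plan is to follow the standard Selberg--type zero detection strategy adapted to the family $\{L(f.\chi,s)\}_{f \in S_2^*(q)}$, using the Petersson trace formula as the substitute for the large sieve. The starting point is Littlewood's lemma applied to $L(f.\chi,s)$ on a narrow rectangle with left edge $\sigma_1 = 1/2 + c(\sigma-1/2)/2$ and right edge $\sigma_2 = 2$. Integrating $\log L(f.\chi,s)$ around this rectangle converts $N(f.\chi;\sigma,t_1,t_2)$ into a line integral of $\log|L(f.\chi,\sigma_1 + it)|$ plus bounded terms coming from the right edge (where $L$ has an absolutely convergent Dirichlet series) and the top/bottom (which contribute the $(1+|t_1|+|t_2|)^B$ factor via convexity). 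Squaring and applying Cauchy--Schwarz reduces the problem to bounding a second moment of the form
\begin{equation*}
\sum_{f \in S_2^*(q)} \int_{t_1-1}^{t_2+1} \bigl|\log |L(f.\chi, \sigma_1+it)|\bigr|^2 \, dt.
\end{equation*}

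The next step is mollification. Take a short mollifier $M(f.\chi,s) = \sum_{n \leq \hat q^{\eta}} x_n \chi(n)\lambda_f(n) n^{-s}$ (with $\eta$ a small absolute constant and $x_n$ chosen as the Dirichlet inverse coefficients truncated with a smooth cutoff), so that formally $LM \approx 1$. Using $\log|L| \leq \log|LM| + \log|M^{-1}| \leq |LM - 1| + |\log|M||$ together with the inequality $(x+y)^2 \leq 2x^2+2y^2$, the second moment reduces to bounding
\begin{equation*}
\sum_{f} \int_{t_1-1}^{t_2+1} \bigl|L(f.\chi,\sigma_1+it)M(f.\chi,\sigma_1+it) - 1\bigr|^2 \, dt
\end{equation*}
plus an easier term for $\log|M|$ that is handled by opening the Dirichlet polynomial and invoking Lemma 3.2 directly.

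The heart of the proof is then a mean value estimate for the mollified $L$-function on the line $\mathrm{Re}(s)=\sigma_1$. Expanding $L(f.\chi,s)$ via the approximate functional equation (Lemma 3.3, shifted to the line $\sigma_1$) and multiplying by the Dirichlet polynomial $M(f.\chi,s)$ produces a sum over $f$ of products $\lambda_f(m)\lambda_f(n)$ against suitable weights. Applying the Petersson formula (Lemma 3.2) yields a diagonal contribution of the correct size $\hat q^{2-c(\sigma-1/2)} \mathscr{L}^2$ after summing over $t$, while the off-diagonal Kloosterman contribution is controlled by Lemma 3.3 exactly as in Sections~6--8, provided $\eta$ is chosen small enough relative to the $q^{1-\delta}$ condition. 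The key arithmetic savings come from the fact that $\sigma_1 > 1/2$, which forces the length of the approximate functional equation to be $\hat q^{2(1-\sigma_1)} = \hat q^{1-c(\sigma-1/2)}$ rather than $\hat q$.

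The main obstacle will be the uniformity in $\sigma$ and in $t_1,t_2$: one must choose $M$ (in particular its length $\hat q^\eta$) independently of $\sigma$ while still getting a saving exponential in $\sigma - 1/2$, and one must keep polynomial control in $|t_1|+|t_2|$ throughout the Stirling estimates in the gamma factors appearing in the approximate functional equation. A secondary technical point is that the bound must hold for the natural (unweighted) sum, which requires removing the harmonic weights $w_f$ exactly as in Section~9 via the auxiliary weight $w_f(\hat q^\kappa)$; but since we only seek an upper bound and a power saving is already available, this removal adds only a harmless factor.
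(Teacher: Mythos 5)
Your plan has the right architecture---mollify, reduce to a second moment of $LM-1$, and prove that second moment via Petersson plus the Kloosterman average bound---but the detection step is set up in a way that creates a genuine gap, and the paper's route is specifically designed to avoid it.

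You apply Littlewood's lemma to $L(f.\chi,s)$ itself, which produces a boundary integral of $\log|L|$, and then you try to mollify by writing $\log|L| \leq |LM-1| + |\log|M||$. The first summand is exactly what the mollified second moment controls, but the second summand $|\log|M||$ is not ``an easier term handled by opening the Dirichlet polynomial.'' The quantity $\log|M(f.\chi,s)|$ is not a Dirichlet polynomial: $M$ can be small or vanish, $\log|M|$ is therefore unbounded below, and its square-integral over $f$ and $t$ is not controlled by Lemma~3.2. To salvage this you would need a separate argument showing $\sum_f \int |\log|M(f.\chi,\sigma_1+it)||^2\,dt$ is small, and no such estimate is in sight from the tools you cite. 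This is the missing step.

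The paper sidesteps this entirely by applying the zero-counting machinery not to $L$ but to $h_{f.\chi}=LM$. Since $M$ is entire, every zero of $L$ in the rectangle is a zero of $h$, so $N(f.\chi;\sigma,t_1,t_2)$ is bounded by the zero-count for $h$. Selberg's Lemma~14 and Theorem~4 then give a \emph{one-sided} inequality of the form
\begin{equation*}
N(f.\chi;\sigma,t_1,t_2)\ \leq\ \frac{2\mathscr{L}}{\pi}\int \sin(\cdots)\,\log|h_{f.\chi}|\,dt \ +\ \frac{2\mathscr{L}}{\pi}\int \sinh(\cdots)\,\log|h_{f.\chi}|\,dx,
\end{equation*}
with nonnegative kernels, and one then writes $h=1+(LM-1)$ and uses $\log|1+x|\leq|x|$ and $\sinh\geq 0$. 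Only an \emph{upper} bound on $\log|LM|$ is needed; the term $\log|M|$ never appears. Squaring, Cauchy--Schwarz, and Lemma~10.2 then finish. (The factor $\mathscr{L}^2$ comes from the Selberg kernels, whose total mass is of order $\mathscr{L}$, squared.) If you replace ``apply Littlewood to $L$'' with ``apply Littlewood or Selberg's Lemma~14 to $LM$,'' your plan collapses onto the paper's proof and the gap disappears. The remaining parts of your sketch (approximate functional equation at $\sigma_1>1/2$, short mollifier of length $\hat{q}^\eta$, Petersson diagonal, Kloosterman control of off-diagonal, polynomial $t$-dependence from Stirling) are consistent with what is needed to prove Lemma~10.2, which the paper states as a black box; your observation about removing harmonic weights is unnecessary here, since Lemmas 10.1 and 10.2 are already stated for the natural, unweighted sum.
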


\begin{remark}
\emph{As noted in [\textbf{\ref{KM}}] (see Section 4.1), we are interested in the $q$-aspect of the density theorem. A polynomial bound with respect to $t_1$ and $t_2$ is sufficient for our purpose.} 
\end{remark}

To prove Lemma 10.1, we shall appeal to a result similar to Proposition 4 of [\textbf{\ref{KM}}], which estimates a mollified second moment of $L(f.\chi,s)$ on average.

Let $y=\hat{q}^{\Delta}$ and let 
\begin{displaymath}
g(x)=\left\{ \begin{array}{ll}
1 &\qquad \textrm{if $x\leq\sqrt{y}$}\\
\frac{2\log y/x}{\log y} & \qquad\textrm{if $\sqrt{y}\leq x\leq y$}\\
0 &\qquad \textrm{if $x>y$}. 
\end{array} \right.
\end{displaymath}
We define the function $M(f.\chi,s)$ to be
\begin{equation*}
M(f.\chi,s)=\sum_{\substack{m,n\geq1\\(n,q)=1}}\frac{\mu(m)\mu(mn)^2\chi(mn^2)\lambda_f(m)g(mn)}{(mn^2)^s}.
\end{equation*}

\begin{lemma}
Let $0<\Delta<\tfrac{1}{2}$. There exist absolute constants $B,c>0$ such that for all $q$ prime large enough we have
\begin{equation*}
\sum_{f\in S_{2}^{*}(q)}\big|L(f.\chi,\sigma+it)M(f.\chi,\sigma+it)-1\big|^2\ll_{c,\Delta}(1+|t|)^B\hat{q}^{2-c(\sigma-1/2)},
\end{equation*}
uniformly for $\sigma\geq\tfrac{1}{2}+\mathscr{L}^{-1}$ and $t\in\mathbb{R}$.
\end{lemma}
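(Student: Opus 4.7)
The plan is to bound the mollified second moment by expanding the square and showing that the first and second mollified moments of $L(f.\chi,s)$ produce matching main terms that cancel against the constant $|S_2^*(q)|$ coming from the $-1$ in $LM-1$, leaving only an error of the desired size. Explicitly, writing $s=\sigma+it$,
\begin{equation*}
\sum_{f\in S_2^*(q)} |LM-1|^2 = \sum_f |LM|^2 - 2\,\mathrm{Re}\sum_f LM + |S_2^*(q)|.
\end{equation*}
The function $M(f.\chi,s)$ is, by construction, the truncation at $mn\le y=\hat q^{\Delta}$ of the Dirichlet series obtained by formally inverting the Euler product of $L(f.\chi,s)$; therefore one expects $L\cdot M\approx 1$ termwise, so that $\sum_f LM\approx |S_2^*(q)|$ and $\sum_f|LM|^2\approx |S_2^*(q)|$, delivering the required cancellation up to admissible errors.

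For the linear moment $\sum_f LM$, I would first pass from the natural to the harmonic average via the analogue of Lemma 9.1; condition \eqref{512} follows from the mollifier shape together with a functional-equation-based $L^2$ bound on $L$, while \eqref{513} follows from the convexity bound $L(f.\chi,s)\ll\hat q^{1/2}(1+|t|)^{1/2}$ combined with $w_f\ll\mathscr{L}/q$ and the trivial bound on $M$. I would then invoke Lemma 3.4 to truncate $L$ as a Dirichlet series with smooth cutoff at length $\hat q^{2+\varepsilon}(1+|t|)^A$, apply Hecke's recursion (Lemma 3.1) to combine $\lambda_f(m)\lambda_f(n)$, and use the Petersson formula (Lemma 3.2). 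The diagonal collapses, by the design of $M$ as the partial Dirichlet inverse, to $|S_2^*(q)|$ up to admissible error; the off-diagonal contribution is bounded via Lemma 3.3 by $\ll_\varepsilon q^{-1+\varepsilon}\hat q^{2+\Delta+\varepsilon}(1+|t|)^A$, which is admissible since $\Delta<\tfrac12$.

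For the quadratic moment $\sum_f |LM|^2$, I would apply Lemma 3.5 to express $|L(f.\chi,s)|^2$ as a sum of two double Dirichlet series of combined effective length $\hat q^{2+\varepsilon}(1+|t|)^A$, multiply by $|M|^2$ of length $y^2=\hat q^{2\Delta}$, apply Hecke's recursion twice to unwind the resulting four-fold product of $\lambda_f$'s, and again appeal to Petersson. The off-diagonal Kloosterman contribution is controlled by Lemma 3.3 to be of size $\ll q^{-1+\varepsilon}\hat q^{2+2\Delta+\varepsilon}(1+|t|)^B$, which is admissible precisely when $\Delta<\tfrac12$. The diagonal contribution is evaluated via a contour integral analysis along the lines of Sections 6--8, using the Mellin transform of the cutoff $g$; by the inversion identity for the Euler product of $L(f.\chi,s)$, it also collapses to $|S_2^*(q)|$ up to admissible error.

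The main obstacles will be two-fold. First, extracting the main terms from the two moments in a form precise enough to exhibit the exact cancellation against $|S_2^*(q)|$ requires careful tracking of the residues in the Mellin integrals associated with $g$, together with identification of the arithmetic Euler products that arise (this is essentially where the design of $M$ pays off). Second, to obtain the crucial power saving $\hat q^{-c(\sigma-1/2)}$, the contours must be shifted slightly past $\mathrm{Re}(w)=0$ so that the hypothesis $\sigma-\tfrac12\gg\mathscr{L}^{-1}$ converts factors of the form $\hat q^{-\delta(\sigma-1/2)}$ coming from the shifts into the stated saving. Uniformity in $t$ is ensured by Stirling's formula applied to the gamma factors in the approximate functional equation, which produces the polynomial factor $(1+|t|)^B$. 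The whole argument is the twisted analogue of the mollified second moment estimate of Kowalski-Michel, adapted to the family $\{L(f.\chi,s)\}_{f\in S_2^*(q)}$.
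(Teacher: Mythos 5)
The paper does not actually prove this lemma: it is stated as ``a result similar to Proposition 4 of [KM]'' and the surrounding text defers explicitly to [KM] and [KMV] for details. So the relevant comparison is between your sketch and the Kowalski--Michel argument that the paper silently invokes. Your overall plan --- expand $|LM-1|^2 = |LM|^2 - 2\,\mathrm{Re}(LM) + 1$, approximate $L$ (resp.\ $|L|^2$) by truncated Dirichlet series with polynomial $t$-dependence in the cutoff length, combine Fourier coefficients via Hecke, apply Petersson, control the off-diagonal by Lemma 3.3, and exhibit the main-term cancellation via the Euler-product inversion built into $M$, with the $\hat{q}^{-c(\sigma-1/2)}$ saving coming from the $w$-contour shift --- is precisely the [KM] route, and it is the correct one.

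Two quantitative points in your sketch are loose and worth flagging, even though they do not undermine the strategy. First, the off-diagonal bound you write for the quadratic term, $\ll q^{-1+\varepsilon}\hat{q}^{2+2\Delta+\varepsilon}(1+|t|)^B$, carries an extraneous factor of $\hat{q}^{2}$. After integration by parts and Lemma 3.3 the $L$-series lengths are absorbed (as in \eqref{240}--\eqref{242}), and what survives is $q^{-1+\varepsilon}$ times the number of mollifier pairs, i.e.\ roughly $q^{-1+\varepsilon}\hat{q}^{2\Delta}\approx q^{\Delta-1+\varepsilon}$ in the harmonic normalisation. Second, as a consequence, your assertion that this is ``admissible precisely when $\Delta<\tfrac12$'' is not right: the off-diagonal alone would tolerate any $\Delta<1$. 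The true provenance of the hypothesis $\Delta<\tfrac12$ in [KM] is the more delicate main-term (diagonal) analysis of the two moments --- the diagonal terms have to cancel to within $\hat{q}^{-c(\sigma-1/2)}$, which imposes the sharper length restriction. You correctly identify this cancellation as the principal difficulty, so this is a matter of attributing the constraint to the right step rather than a structural error in the argument.
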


\begin{remark}
\emph{The function $M(f.\chi,s)$ defined above is a mollifier for $L(f.\chi,s)$. We note that for $\sigma>1$, the inverse $L(f.\chi,s)^{-1}$ is given by the Dirichlet series}
\begin{equation*}
L(f.\chi,s)^{-1}=\sum_{\substack{m,n\geq1\\(n,q)=1}}\frac{\mu(m)\mu(mn)^2\chi(mn^2)\lambda_f(m)}{(mn^2)^s}\qquad(\sigma>1).
\end{equation*}
\end{remark}

We shall now prove Lemma 10.1 by following an argument of Selberg [\textbf{\ref{S1}}]. We can assume that $t_2-t_1=\mathscr{L}^{-1}$. Let 
\begin{equation*}
\sigma'=\sigma-1/2\mathscr{L},\qquad t_1'=t_1-\eta/\mathscr{L}\qquad\textrm{and}\qquad t_2'=t_2+\eta/\mathscr{L},
\end{equation*}
where $\eta>0$ is large enough. Consider the function $h_{f.\chi}(s)=L(f.\chi,s)M(f.\chi,s)$, which vanishes at zeros of $L(f.\chi,s)$. Using Lemma 14 and Theorem 4 of Selberg [\textbf{\ref{S1}}] we have
\begin{eqnarray*}
N(f.\chi;\sigma,t_1,t_2)&\leq&\frac{2\mathscr{L}}{\pi}\int_{t_1'}^{t_2'}\sin\bigg(\frac{t-t_1'}{t_2'-t_1'}\pi\bigg)\log\big|h_{f.\chi}(\sigma'+it)\big|dt\\
&&\ +\frac{2\mathscr{L}}{\pi}\int_{\sigma'}^{\infty}\sinh\bigg(\frac{x-\sigma'}{t_2'-t_1'}\pi\bigg)\bigg(\log\big|h_{f.\chi}(x+it_1')\big|+\log\big|h_{f.\chi}(x+it_2')\big|\bigg)dx.
\end{eqnarray*}
We write $h_{f.\chi}(s)=1+\big(LM(f.\chi,s)-1\big)$ and use the inequalities
\begin{equation*}
\log|1+x|\leq|x|,\qquad \sinh(x)\geq0\quad(x>0).
\end{equation*}
Hence
\begin{eqnarray*}
N(f.\chi;\sigma,t_1,t_2)&\leq&\frac{2\mathscr{L}}{\pi}\int_{t_1'}^{t_2'}\sin\bigg(\frac{t-t_1'}{t_2'-t_1'}\pi\bigg)\big|LM(f.\chi,\sigma'+it)-1\big|dt\\
&&\!\!\!\!\!\!\!\!\!\!\!\!\!\!\!\!\!\!\!\!\!\!\!\!\!\!\!\!\!\! +\frac{2\mathscr{L}}{\pi}\int_{\sigma'}^{\infty}\sinh\bigg(\frac{x-\sigma'}{t_2'-t_1'}\pi\bigg)\bigg(\big|LM(f.\chi,x+it_1')-1\big|+\big|LM(f.\chi,x+it_2')-1\big|\bigg)dx.
\end{eqnarray*}
We now square this estimate and average over $f$. The first square term is
\begin{equation*}
\ll \mathscr{L}^2\int_{t_1'}^{t_2'}\int_{t_1'}^{t_2'}\sin\bigg(\frac{t-t_1'}{t_2'-t_1'}\pi\bigg)\sin\bigg(\frac{\tau-t_1'}{t_2'-t_1'}\pi\bigg)\mathscr{M}_1(t,\tau)dtd\tau,
\end{equation*}
where
\begin{equation*}
\mathscr{M}_1(t,\tau)=\sum_{f\in S_{2}^{*}(q)}|LM(f.\chi,\sigma'+it)-1\big||LM(f.\chi,\sigma'+i\tau)-1\big|.
\end{equation*}
By Cauchy's inequality and Lemma 10.2,
\begin{eqnarray*}
\mathscr{M}_1(t,\tau)&\leq&\bigg(\sum_{f\in S_{2}^{*}(q)}|LM(f.\chi,\sigma'+it)-1\big|^2\bigg)^{1/2}\bigg(\sum_{f\in S_{2}^{*}(q)}|LM(f.\chi,\sigma'+i\tau)-1\big|^2\bigg)^{1/2}\\
&\ll_c&\big(1+|t|\big)^B\big(1+|\tau|\big)^B\hat{q}^{2-c(\sigma'-1/2)}.
\end{eqnarray*}
Hence  the first square term is bounded by
\begin{eqnarray*}
&\ll_c& \mathscr{L}^2\hat{q}^{2-c(\sigma'-1/2)}\bigg(\int_{t_1'}^{t_2'}\sin\bigg(\frac{t-t_1'}{t_2'-t_1'}\pi\bigg)\big(1+|t|\big)^Bdt\bigg)^2\\
&\ll_c& \mathscr{L}^2\big(1+|t_1'|+|t_2'|\big)^{2B+2}\hat{q}^{2-c(\sigma-1/2)}(t_2'-t_1')^2.
\end{eqnarray*}
Similarly for the other square terms and we obtain Lemma 10.1. 
\end{proof}

\end{document}